\newcommand{\rrvert}{\vert}
\newcommand{\llvert}{\vert}
\newcommand{\implies}{\Longrightarrow}
\newtheorem{theorem}{Theorem}[section]
\newtheorem{lemma}[theorem]{Lemma}
\newtheorem{corollary}[theorem]{Corollary}
\newcommand{\BR}{{\mathbb R}}
\newcommand{\BE}{{\mathbb E}}
\newcommand{\BP}{{\mathbb P}}
\newcommand{\CM}{{\mathcal M}}
\newcommand{\CL}{{\mathcal L}}
\newcommand{\CS}{{\mathcal S}}
\newcommand{\CC}{{\mathcal C}}
\newcommand{\CJ}{{\mathcal J}}
\newcommand{\CI}{{\mathcal I}}
\newcommand{\CE}{{\mathcal E}}
\newcommand{\one}{{\mathbf1}}
\newcommand{\trace}{\operatorname{trace}}
\newcommand{\weakto}{\stackrel{w}{\rightarrow}}
\begin{document}
\begin{frontmatter}

\title{Systems with large flexible server pools: Instability of ``natural'' load balancing}
\runtitle{Load balancing instability}

\begin{aug}
\author[A]{\fnms{Alexander L.} \snm{Stolyar}\corref{}\ead[label=e1]{stolyar@research.bell-labs.com}}
\and
\author[B]{\fnms{Elena} \snm{Yudovina}\thanksref{t1}\ead[label=e2]{yudovina@umich.edu}}
\runauthor{A. L. Stolyar and E. Yudovina}
\affiliation{Bell Labs, Alcatel-Lucent and University of Michigan}
\address[A]{Bell Labs, Alcatel-Lucent\\
600 Mountain Ave., 2C-322\\
Murray Hill, New Jersey 07974\\
USA\\
\printead{e1}}
\address[B]{Department of Statistics\\
University of Michigan\\
439 West Hall, 1085 South University Ave.\\
Ann Arbor, Michigan 48109\\
USA\\
\printead{e2}} 
\end{aug}

\thankstext{t1}{Supported by the NSF Graduate Research Fellowship.}

\received{\smonth{12} \syear{2010}}
\revised{\smonth{4} \syear{2012}}

%
\begin{abstract}
We consider general large-scale service systems with multiple customer
classes and multiple server (agent) pools,
mean service times depend both on the customer class and server pool.
It is assumed that the allowed activities
(routing choices) form a tree (in the graph with vertices
being both customer classes and server pools).
We study the behavior of the system under a natural (load balancing)
routing/scheduling rule, \textit{Longest-Queue Freest-Server}
(LQFS-LB), in
the many-server asymptotic regime, such that the
exogenous arrival rates of the customer classes, as well as the
number of agents in each pool, grow to infinity in proportion to
some scaling parameter $r$. \textit{Equilibrium point} of the system under
LQBS-LB is the desired operating point, with server pool loads
minimized and perfectly balanced.

Our main results are as follows. (a) We show that, quite surprisingly
(given the tree assumption), for certain parameter ranges, the
\textit{fluid limit} of the system may be \textit{unstable} in the
vicinity of the
equilibrium point; such instability may occur if the activity graph is
not ``too small.'' (b) Using (a), we demonstrate that the sequence of
stationary distributions of \textit{diffusion-scaled} processes [measuring
$O(\sqrt{r})$ deviations from the equilibrium point] may be nontight,
and in fact may escape to infinity. (c) In one special case of
interest, however, we show that the sequence of stationary
distributions of diffusion-scaled processes is tight, and the limit of
stationary distributions is the stationary distribution of the limiting
diffusion process.
\end{abstract}

%
\begin{keyword}[class=AMS]
\kwd{60K25}
\kwd{60F17}
\end{keyword}
\begin{keyword}
\kwd{Many server models}
\kwd{fluid limit}
\kwd{diffusion limit}
\kwd{load balancing}
\kwd{instability}
\kwd{tightness of invariant distributions}
\end{keyword}

\end{frontmatter}

\section{Introduction}

Large-scale service systems (such as call centers)
with heterogeneous customer and server (agent) populations bring up the
need for efficient dynamic control policies that
match arriving (or waiting) customers and available servers. In this
setting, two goals are desirable. On the one hand, customers should not
be kept waiting, if this is possible. On the other hand, idle time
should be distributed fairly among the servers. For example, one would
like to avoid the situation in which one of the server pools is fully
busy while another one has significant numbers of idle agents.

Consider a general system, where the arrival rate of class $i$
customers is $\Lambda_i$,
the service rate of a class $i$ customer by type $j$ agent is $\mu_{ij}$,
and the server pool sizes are $B_j$.
Another very desirable feature of a dynamic control is insensitivity to
parameters $\Lambda_i$ and $\mu_{ij}$.
That is, the assignment of customers to server pools should, to the
maximal degree possible, depend only on the current system state, and
not on prior knowledge of arrival rates or mean service times, because
those parameters may not be known in advance and, moreover, they may be
changing in time.

If the system objective is to minimize the maximum average load of any
server pool, a ``static'' optimal control can be obtained by solving a
linear program, called \textit{static planning problem} (SPP), which
has $B_j$'s, $\mu_{ij}$'s and $\Lambda_i$'s as parameters. An optimal
solution to the SPP will prescribe optimal average rates $\Lambda_{ij}$
at which arriving customers should be routed to the server pools.
Typically (in a certain sense) the solution to SPP is unique and the
\textit{basic activities}, that is, routing choices $(ij)$ for which
$\Lambda_{ij}>0$, form a tree; let us assume this is the case. It is
possible to design a dynamic control policy, which achieves the load
balancing objective without a priori knowledge of input rates
$\Lambda_i$---the \textit{Shadow Routing} policy in~\cite{StolyarTezcanunderload,StolyarTezcan}
does just that, and in the process it ``automatically identifies'' the
basic activity tree. Shadow Routing policy, however, does need to
``know'' the service rates $\mu_{ij}$.

The key question we address in this paper is as follows. Suppose a
control policy
does \textit{not} know the service rates $\mu_{ij}$, but ``somehow'' it
does know
the structure of the basic activity tree, and restrict
routing to this tree only. [E.g., all feasible activities,
i.e., those $(ij)$'s for which $\mu_{ij}>0$, may form a tree simply
by the structure of the system.
Another example: if Shadow Routing has some estimates of $\mu_{ij}$,
this will not
be sufficient
for it to identify the optimal routing rates, but may very well be
sufficient to
correctly identify the
basic activity tree.] What is an efficient load balancing policy in
this case?

If routing is restricted to a tree, it is very natural to conjecture that
simple policies of the type considered by Gurvich and Whitt \cite
{GurvichWhitt},
Atar, Shaki and Shwartz~\cite{Atar2009} and Armony and Ward \cite
{ArmonyWard},
which are of the ``serve longest queue'' and ``join least loaded pool'' type,
should ``typically be good enough.'' Some of the results in these (and
other) papers,
in fact, prove optimal behavior of simple load balancing
schemes on a \textit{finite time interval};
which further supports the above informal conjecture.
One of the main contributions of our work is to show that,
surprisingly, the above conjecture
is \textit{not} correct for a general parameter setting. The key reason
is that
a ``natural'' load balancing, \textit{even if it is done along an
a priori given optimal tree},
may render the system unstable in the vicinity
of equilibrium point.

The specific
control rule we analyze in this paper can be seen as a special case of
the Queue-and-Idleness Ratio rule considered in~\cite{GurvichWhitt}.
Within the given (basic) activity tree, if an arriving customer sees
multiple available servers, it will choose the server pool with the
smallest load; while if a server sees several customers waiting in
queues, it will take a customer from the longest queue.
We call this rule \textit{Longest-Queue Freest-Server} (LQFS-LB).

We consider a many-server asymptotic regime, such that $\Lambda_i =
\lambda_i r$
[or sometimes $\Lambda_i = \lambda_i r + O(\sqrt{r})$], $B_j = \beta
_j r$,
where $\lambda_i$ and $\beta_j$ are some positive constants, $r\to
\infty$ is a scaling
parameter and $\mu_{ij}$ remain constant. Our key results show that
the \textit{fluid limit}
of the system process (obtained via space-scaling by $1/r$) can be
unstable in the vicinity
of the equilibrium point. This is very counterintuitive, because it
would be reasonable
to expect the contrary: that a simple load balancing in a system with
activity graph
free of cycles would be ``well behaved.''

Using the fluid limit local instability (when such occurs), we prove
that
the sequence of stationary distributions
of \textit{diffusion-scaled} processes [measuring $O(\sqrt{r})$
deviations from the equilibrium
point] may be nontight, and in fact may escape to infinity.
This of course means, in particular, that the behavior of the diffusion
limit in the vicinity
of equilibrium point
\textit{on a finite time interval}, may not be relevant to the system
behavior in steady state,
because the system ``does not spend any time'' in the $O(\sqrt{r})$-vicinity
of the equilibrium point.

In addition to the instability examples, we prove that in several cases the
fluid limit will be (at least locally) stable.
We demonstrate that fluid limit of
any underloaded system with at most two customer classes,
or critically loaded system with at most four customer classes,
is always locally stable.
We also demonstrate local stability in the case when the service rate
depends only on
the customer type (but not server pool, as long as it can serve it).
In the case when the service rate depends only on the server type (but
not customer
type, as long as it can be served), we show more---the global
stability of the fluid limit.

General results on the asymptotics of
stationary distributions (most impor\-tantly---their tightness),
especially in the many-server systems' diffusion limit,
are notoriously difficult to derive; for recent results in this
direction see~\cite{GamarnikZeevi,GamarnikMomcilovic}.
In the special case when the service rate depends only on the server type,
we prove that under the LQFS-LB policy the sequence of stationary distributions
of diffusion-scaled processes is tight,
and the limit of stationary distributions is the
stationary distribution
of the limiting diffusion process.

The structure of the paper is as follows. In Section \ref
{sectionmodel} we present the model,
define the static planning problem and related notions
and define the LQFS-LB policy.
In Section~\ref{sectionfluidmodel} we define fluid models of the system,
derive their basic properties in the vicinity of an equilibrium point,
and define local stability.
Section~\ref{sectionspecialcases} contains fluid model
stability results in the two special cases when the service rate
depends on server class only
or on customer type only.
Our key results on local instability of fluid models
are presented in Section~\ref{secunderloaddivergence}.
In Section~\ref{sectiondiffusionlimit} we consider
an underloaded system (with optimal average utilization being
$1-\varepsilon<1$), and
prove possible evanescence of stationary distributions of the diffusion scaled
processes.
Finally, Section~\ref{sec-hw} considers the so-called Halfin--Whitt asymptotic
regime [where the optimal average utilization is $1-O(1/\sqrt{r})$],
and contains two
main results on the asymptotics
of stationary distributions of the diffusion scaled
processes: (a) possible evanescence under certain parameters and (b)
tightness (and ``limit interchange'') result for the case
when the service rate depends only on the server type.

\section{Model}\label{sectionmodel}
\subsection{The model; static planning (LP) problem}
Consider the model in which there are $I$ customer classes, or types,
labeled $1,2,\ldots,I$, and $J$ server (agent) pools, or classes,
labeled $1,2,\ldots,J$ (generally, we will use the subscripts $i$,
$i'$ for customer classes, and $j$, $j'$ for server pools). The sets of
customer classes and server classes will be denoted by $\CI$ and $\CJ
$, respectively.

We are interested in the scaling properties of the system as it grows
large. The meaning of ``grows large'' is as follows. We consider a
sequence of systems indexed by a scaling parameter $r$. As $r$ grows,
the arrival rates and the sizes of the service pools, but not the speed
of service, increase. Specifically, in the $r$th system, customers of
type $i$ enter the system as a Poisson process of rate $\lambda^r_i =
r \lambda_i + o(r)$, while the $j$th server pool has $r \beta_j$
individual servers. (All $\lambda_i$ and $\beta_j$ are positive parameters.)
Customers may be accepted for service immediately upon arrival, or
enter a queue; there is a separate queue for each customer type.
Customers do not abandon the system. When a customer of type $i$ is
accepted for service by a server in pool $j$, the service time is
exponential of rate $\mu_{ij}$; the service rate depends both on the
customer type and the server type, but \textit{not} on the scaling
parameter $r$. If customers of type $i$ cannot be served by servers of
class $j$, the service rate is $\mu_{ij} = 0$.

We would like to balance the proportion of busy servers across the
server pools, while keeping the system operating efficiently. Let
$\lambda_{ij}^r$ be the average rates at which type $i$ customers are
routed to server pools $j$. We would like the system state to be such
that $\lambda^r_{ij}$ are close to $\lambda_{ij} r$, where $\{\lambda
_{ij}\}$ is an optimal solution to the following \textit{static planning
problem} (SPP), which is the following linear program:
%
%
\begin{equation}
\label{eqnStaticLP} \min_{\lambda_{ij}, \rho} \rho,
\end{equation}
subject to
%
%
%
%
\begin{eqnarray}
\label{lp-constraint0} \lambda_{ij} &\geq&0\qquad\forall i,j,
\\
\label{lp-constraint1} \sum_j
\lambda_{ij} &=& \lambda_i\qquad\forall i,
\\
\label{lp-constraint2} \sum_i
\lambda_{ij} \big/ (\beta_j \mu_{ij}) &\leq&\rho\qquad
\forall j.
\end{eqnarray}
%

We assume that the SPP has a unique optimal solution
$\{\lambda_{ij}, i\in\CI, j\in\CJ\}, \rho$; and it is such that
the \textit{basic activities}, that is, those pairs, or edges, $(ij)$
for which
$\lambda_{ij}>0$, form a (connected) tree in the graph with vertices
set $\CI\cup\CJ$. The set of basic activities is denoted $\CE$.
These assumptions constitute the \textit{complete resource pooling}
(CRP) condition, which holds ``generically;'' see
\cite{StolyarTezcan}, Theorem
2.2. For a customer type $i$, let $\CS(i) = \{j\dvtx
(ij)\in\CE\}$; for a server type $j$, let \mbox{$\CC(j) = \{i\dvtx
(ij)\in\CE
\}$}.

Note that under the CRP condition, all
(``server pool capacity'') constraints (\ref{lp-constraint2})
are binding; in other words, the optimal solution to SPP
minimizes and ``perfectly balances'' server pool loads.
Optimal dual variables $\nu_i, i\in\CI$ and $\alpha_j, j\in\CJ$,
corresponding to constraints (\ref{lp-constraint1}) and (\ref
{lp-constraint2}),
respectively, are unique and all strictly positive;
$\nu_i$ is interpreted as the ``workload'' associated with
one type $i$ customer,
and $\alpha_j$ is interpreted as the (scaled by $1/r$) maximum rate
at which server pool $j$ can process workload.
The following relations hold:
\begin{eqnarray*}
\alpha_j &=& \max_i \nu_i
\beta_j \mu_{ij},\qquad\nu_i = \min_j
\alpha_j / (\beta_j \mu_{ij}),
\\
\sum_j \alpha_j &=& 1,\qquad\sum
_i \lambda_i \nu_i = \rho\sum
_j \alpha_j = \rho.
\end{eqnarray*}

%

If $\rho< 1$, the system is called \textit{underloaded}; if $\rho= 1$,
the system is called \textit{critically loaded}.
In this paper we consider both cases.

In this paper, we assume that the basic activity tree is known in
advance, and restrict our attention to the basic activities only.
Namely, we assume that a type $i$ customer
service in pool $j$ is allowed only if $(ij)\in\CE$. [Equivalently,
we can a priori assume that $\CE$ is the set of \textit{all} possible
activities, i.e., $\mu_{ij} = 0$ when $(ij)\notin\CE$, and $\CE$
is a tree. In this case CRP requires that all feasible activities are
basic.]

Let $\psi_{ij}^* = \lambda_{ij} / \mu_{ij}$.
Continuing our interpretation of the optimal operating point of the
system, let $\Psi^r_{ij}(t)$ be the number of servers of type $j$
serving customers of type $i$ at time $t$.
It is desirable to have $\Psi^r_{ij}(t) = r \psi^*_{ij} + o(r)$.
Later on we will be also interested in the question of whether or not
the $o(r)$ term can in fact be $O(\sqrt{r})$.

\subsection{Longest-Queue, Freest-Server load balancing algorithm (LQFS-LB)}
\label{sectionLQFS-LB}

For the rest of the paper, we analyze the performance of the following
intuitive load balancing algorithm.

We introduce the following notation (for the system with scaling
parameter $r$):\vspace*{9pt}

$\Psi^r_{ij}(t)$ the number of servers of type $j$ serving
customers of type $i$ at time~$t$;\vspace*{2pt}

$\Psi^r_j(t) = \sum_i \Psi^r_{ij}(t)$ the total number of
busy servers of type $j$ at time $t$;\vspace*{2pt}

$\Psi^r_i(t) = \sum_j \Psi^r_{ij}(t)$ the total number of
servers serving
type $i$ customers at time~$t$;\vspace*{2pt}

$\Xi^r_j(t) = \Psi^r_j(t) / \beta_j$
the instantaneous load of server pool $j$ at time $t$;\vspace*{2pt}

$Q^r_i(t)$ the number of customers of type $i$ waiting for
service at time $t$;\vspace*{2pt}

$X^r_i(t)=\Psi^r_i(t)+Q^r_i(t)$ the total number of customers
of type $i$ in the system
at time $t$.\vspace*{9pt}

The algorithm consists of two parts: routing and scheduling.
``Routing'' determines where an arriving customer goes if it sees
available servers of several different types. ``Scheduling'' determines
which waiting customer a server picks if it sees customers of several
different types waiting in queue.

\textit{Routing}: If an arriving customer of type $i$ sees any unoccupied
servers in server classes in $\CS(i)$, it will pick a server in the
least loaded server pool, that is, $j \in\arg\min_{j\in\CS(i)} \Xi
^r_j(t)$. (Ties are broken in an arbitrary Markovian manner.)\vspace*{1pt}

\textit{Scheduling}: If a server of type $j$, upon completing a service,
sees a customer of a class in $\CC(j)$ in queue, it will pick the
customer from the longest queue, that is, $i \in\arg\max_{j\in\CC
(j)} Q^r_i$. (Ties are broken in an arbitrary Markovian manner.)

By~\cite{GurvichWhitt}, Remark 2.3, the LQFS-LB algorithm described
here is a special case of the algorithm proposed by Gurvich and Whitt,
with constant probabilities $p_i = \frac1I$ (queues ``should'' be
equal), $v_j = \frac{\beta_j}{\sum\beta_j}$ (the proportion of idle
servers ``should'' be the same in all server pools).

\subsection{Basic notation}

Vector $(\xi_i, i\in\CI)$, where $\xi$ can be any symbol, is often
written as $(\xi_i)$ or $\xi_{\CI}$; similarly, $(\xi_j, i\in\CJ
)=(\xi_j)=\xi_{\CJ}$ and $(\xi_{ij}, (ij)\in\CE)=(\xi_{ij})=\xi
_{\CE}$.
We will treat $(\xi_{ij}) = \xi_{\CE}$ as a vector, even
though its elements have two indices.
Unless specified otherwise, $\sum_i \xi_{ij} = \sum_{i \in\CC(j)}
\xi_{ij}$ and $\sum_j \xi_{ij} = \sum_{j \in\CS(i)} \xi_{ij}$.
For functions (or random processes) $(\xi(t), t\ge0)$ we often write
$\xi(\cdot)$. (And similarly for functions with domain different from
$[0,\infty)$.) So, for example, $(\xi_i(\cdot))$ and $\xi_{\CI
}(\cdot)$ both signify $((\xi_i(t), i\in\CI), t\ge0)$. The
indicator function of a set $A$ is denoted $\one_A$; that is, $\one
_A(\omega) = 1$ if $\omega\in A$ and 0 otherwise.

The symbol $\implies$
denotes convergence in distribution of either random variables in the
Euclidean space $\mathbb{R}^d$ (with appropriate dimension $d$), or
random processes in the Skorohod space $D^d[\eta,\infty)$ of RCLL
(right-continuous with left limits) functions on $[\eta,\infty)$, for
some constant $\eta\ge0$. (Unless explicitly specified otherwise,
$\eta=0$.) The symbol $\weakto$ denotes the weak convergence of
probability measures on $\BR^d$, or its one-point compactification
$\overline{\BR}{}^d=\BR^d \cup\{*\}$, where $*$ is the ``point at
infinity.'' We always consider the Borel $\sigma$-algebras on $\BR^d$
and $\overline{\BR}{}^d$.

Standard Euclidean norm of a vector $x\in\mathbb{R}^d$ is denoted $|x|$.
The symbol $\to$ denotes ordinary convergence in $\BR^d$ or
$\overline{\BR}{}^d$.
Abbreviation \textit{u.o.c.} means
\textit{uniform on compact sets} convergence of functions, with the
argument (usually in
$[0,\infty)$) which is clear from the context; \textit{w.p.1} means
convergence \textit{with probability 1}; $\dot{f}(t)$ means $(d/dt)f(t)$.
Transposition of a matrix $H$ is denoted~$H^{\dagger}$; in matrix
expressions vectors are understood as column-vectors.

\section{Fluid model}\label{sectionfluidmodel}

\subsection{Definition}
\label{sectionfluidmodel-def}

We now consider the behavior of fluid models associated with this system.
A fluid model is a set of trajectories that w.p.1 contains any limit
of fluid-scaled trajectories of the original stochastic system.
(We postpone proving this relationship between the fluid models and
fluid limits
until Section~\ref{sectionfluidlimit}, in order to not interrupt the
main content of Section~\ref{sectionfluidmodel};
for now, we just formally define fluid models.)

The term \textit{fluid model} denotes a set of Lipschitz continuous functions
\[
\bigl\{\bigl(a_i(\cdot)\bigr),
\bigl(x_i(\cdot)\bigr), \bigl(q_i(\cdot)\bigr), \bigl(
\psi_{ij}(\cdot)\bigr), \bigl(\rho_j(\cdot)\bigr)\bigr\},
\]
which satisfy the equations below.
[Here $a_i(\cdot)=(a_i(t), t\ge0)$, and similarly for other
components.]
The last two equations involving derivatives are to be satisfied at all
regular points $t$, when the derivatives in question exist. The
interpretation of the components is as follows:
$a_i(t)$ is the total number (actually, ``amount,'' i.e., the number,
scaled by $1/r$) of arrivals of type $i$ customers into the system by
time~$t$, $x_i(t)$ is the number
(``amount'') of customers of type $i$ in the system at time $t$,
$q_i(t)$ is the number
(``amount'') of customers of type $i$ waiting in queue at time $t$,
$\psi_{ij}(t)$ is the number
(``amount'') of customers of type $i$ being served by servers of type
$j$ at time $t$, and $\rho_j(t)$ is the instantaneous load [proportion
of busy servers,
the limit of $\Xi^r_j(t)/r$] in server pool $j$.
%
%
\begin{subequation}\label{eqnfluidmodel}
%
%
\begin{eqnarray}
a_i(t) &=& \lambda_i t\qquad\forall i \in\CI,
\\
%
%
x_i(t) &=& q_i(t) + \sum_j
\psi_{ij}(t)\qquad\forall i \in\CI,
\\
%
%
\label{eqn5c} x_i(t) &=& x_i(0) + a_i(t) -
\sum_j \int_0^t
\mu_{ij} \psi_{ij}(s) \,ds\qquad\forall i \in\CI,
\\
%
%
\rho_j(t) &=& \frac1{\beta_j} \sum
_i \psi_{ij}(t)\qquad\forall j \in\CJ,\\
%
%
\label{eqnworkconservation} \rho_j(t) &=& 1 \mbox{ if
$q_i(t) > 0$ for any $i \in\CC(j)$}\qquad\forall j \in\CJ.
\end{eqnarray}
For any set of server types $\CJ^* \subseteq\CJ$ and
any set of customer types $\CI^* \subseteq\CI$ such that $q_i(t)>0$
for all $i\in\CI^*$, and $q_i(t) > q_{i'}(t)$ whenever $i \in\CI^*$,
$i' \notin\CI^*$ and $\CS(i) \cap\CS(i') \cap\CJ^* \neq
\varnothing$,
%
%
\begin{subequation}\label{eqnfluidalgorithm}
%
%
\begin{eqnarray}
\label{eqnfluidalgorithmqueues}\qquad
&&
\sum_{i \in\CI^*} \sum
_{j \in\CS(i) \cap\CJ^*} \dot{\psi}_{ij}(t)
\nonumber\\[-8pt]\\[-8pt]
&&\qquad=\sum_{j \in\bigcup_{i \in\CI^*} \CS(i) \cap\CJ^*}
\sum_{i' \in
\CC(j)}
\mu_{i'j} \psi_{i'j}(t) - \sum_{i \in\CI^*}
\sum_{j \in
\CS(i) \cap\CJ^*} \mu_{ij} \psi_{ij}(t).
\nonumber
\end{eqnarray}
For any sets of customer types $\CI_* \subseteq\CI$,
and any set of server types $\CJ_* \subseteq\CJ$ such that $\rho
_j(t)<1$ for all $j \in\CJ_*$, and $\rho_j(t) < \rho_{j'}(t)$
whenever $j \in\CJ_*$, $j' \notin\CJ_*$, and $\CC(j) \cap\CC
(j') \cap\CI_* \neq\varnothing$,
%
%
\begin{equation}
\label{eqnfluidalgorithmload}\qquad\sum_{j \in\CJ_*} \sum
_{i \in\CC(j) \cap\CI_*} \dot{\psi}_{ij}(t) = \sum
_{i \in\bigcup_{j \in\CJ_*} \CC(j) \cap\CI_*} \lambda_i - \sum
_{j \in\CJ_*} \sum_{i \in\CC(j) \cap\CI_*}
\mu_{ij} \psi_{ij}(t).
\end{equation}
\end{subequation}
\end{subequation}
The meaning of (\ref{eqnfluidalgorithmqueues}) is as follows.
Consider a set of server types $\CJ^*$. If a set of customer types
$\CI^*$ consists of the ``longest queues for $\CJ^*$'' (we will make
this more precise), then servers in pools $j^* \in\CJ^*$, whenever
they finish serving some customer, will immediately replace her with
someone from a queue in $\CI^*$. In this case, the total number of
customers of types $\CI^*$ in service by servers of types $\CJ^*$
will be increasing at the total rate of servicing all customers by
servers in $\CJ^*$, less the rate of servicing customers of types $\CI
^*$ by servers in $\CJ^*$. The requirements that $\CI^*$ needs to
satisfy for this to be the case are, that there be no customer types
outside $\CI^*$ with longer queues that servers in $\CJ^*$ can serve.
For example, a one-element set $\CI^* = \{i^*\}$ is a valid choice for
a one-element set $\CJ^* = \{j^*\}$ if and only if the customer type
$i^* \in\CC(j^*)$ has the (strictly) longest queue among all of the
customer types that can be served by $j^*$.

The second equation, (\ref{eqnfluidalgorithmload}), describes the
fact that if a set of server pools $\CJ_*$ consists of the ``least
loaded server pools available to $\CI_*$,'' then servers in pools $j^*
\in\CJ^*$, whenever they finish serving some customer, will
immediately replace her with someone from queue $i^*$. For example, a
one-element set $\CJ_* = \{j_*\}$ is a valid choice for a one-element
set $\CI_* = \{i_*\}$ if and only if the server pool $j_* \in\CS
(i_*)$ has the (strictly) smallest load $\rho_{j_*}$ among all of the
server pools that can serve $i_*$.

\subsection{Behavior in the vicinity of equilibrium point}

We define the \textit{equilibrium} (\textit{invariant}) point of the
underloaded
($\rho< 1$) fluid model to be the state $\psi_{ij} = \psi^*_{ij}$
and $q_i = q=0$ for all $i \in\CI$, $j \in\CJ$.
[All other components of the fluid model are also constant and uniquely
defined by $(\psi^*_{ij})$ and $q$.] Clearly,\vspace*{1pt} $\psi
_{ij}(t) \equiv
\psi^*_{ij}$ and $q_i(t) \equiv q$ is indeed a stationary fluid model.
Desirable system behavior would be to have $(\psi_{ij}(t)) \to(\psi
^*_{ij})$ as $t \to\infty$.\vspace*{1pt}

Note that if the initial system state is in the vicinity of the
equilibrium point (with $\rho< 1$), then there is no queueing in the
system, and we can describe the system with just the variables $(\psi
_{ij}(t))$. This will be true for at least some time (depending on
$\rho$ and the initial distance to the equilibrium point), because the
fluid model is Lipschitz.

The following is a ``state space collapse'' result for the underloaded
fluid model in the neighborhood of the equilibrium point.
%
%
\begin{theorem}\label{thmunderload,linearequation}
Let $\rho< 1$. There exists a sufficiently small $\varepsilon> 0$,
depending only on the system parameters, such that for all sufficiently
small $\delta$ the following holds. There exist $T_1 = T_1(\delta)$
and $T_2 = T_2(\delta)$, $0 < T_1 < T_2$, such that if the initial
system state $(\psi_{ij}(0))$ satisfies
\[
\bigl\llvert\bigl(\psi_{ij}(0)\bigr) - \bigl(\psi^*_{ij}
\bigr)\bigr\rrvert< \delta, 
\]
then for all $t \in[T_1, T_2]$ the system state satisfies
\[
\bigl\llvert\bigl(\psi_{ij}(t)\bigr) - \bigl(\psi^*_{ij}
\bigr)\bigr\rrvert< \varepsilon,\qquad
\rho_j(t) =
\rho_{j'}(t) \qquad\mbox{for all $j, j' \in\CJ$}.
\]
Moreover, $T_1 \downarrow0$ and $T_2 \uparrow\infty$ as $\delta
\downarrow0$. The evolution of the system on $[T_1, T_2]$ is described
by a linear ODE, specified below by (\ref{eqnunderloadODEmatrix}).
\end{theorem}

If the fluid system is critically loaded ($\rho= 1$), it may have
queues at equilibrium, and the equilibrium is nonunique.
Namely, the definition of an equilibrium (invariant) point for $\rho
=1$ is the same as for the underloaded system, except the condition on
the queues becomes $q_i= q$ for some constant $q\ge0$. In the next
Theorem~\ref{thmcriticalload,linearequation} we will only consider
the case of positive queues ($q>0$) for the critically loaded fluid model.
%
%
\begin{theorem}\label{thmcriticalload,linearequation}
Let $\rho= 1$, and consider an equilibrium point with $q>0$.
There exists a sufficiently small $\varepsilon> 0$, depending only on the
system parameters, such that for all sufficiently small $\delta> 0$
the following holds. There exist $T_1 = T_1(\delta)$ and $T_2 =
T_2(\delta)$, $0 < T_1 < T_2$, such that if the initial system state satisfies
\[
\bigl\llvert\bigl(\psi_{ij}(0)\bigr) - \bigl(\psi^*_{ij}
\bigr)\bigr\rrvert< \delta,\qquad\bigl\llvert q_i(0) - q\bigr
\rrvert<
\delta\qquad\mbox{for all $i \in\CI$},
\]
then for all $t \in[T_1, T_2]$ the system state satisfies
\begin{eqnarray*}
\bigl\llvert\bigl(\psi_{ij}(t)\bigr) - \bigl(\psi^*_{ij}
\bigr)\bigr\rrvert&<& \varepsilon,\qquad\bigl\llvert q_i(t) -
q\bigr\rrvert<
\varepsilon\qquad\mbox{for all $i \in\CI$},
\\
q_i(t) &=& q_{i'}(t) \qquad\mbox{for all $i, i'
\in\CI$}.
\end{eqnarray*}
Moreover, $T_1 \downarrow0$ and $T_2 \uparrow\infty$ as $\delta
\downarrow0$. The evolution of the system on $[T_1, T_2]$ is described
by a linear ODE specified below by (\ref{eqncriticalloadODE}).
\end{theorem}


In the rest of this section and the paper, the values associated with
a stationary fluid model, ``sitting'' at an equilibrium point,
are referred to as \textit{nominal}. For example, $\psi_{ij}^*$ is the
nominal occupancy (of pool $j$ by
type~$i$), $\lambda_{i}$ is the nominal arrival rate, $\lambda_{ij}$
is the nominal
routing rate [along activity $(ij)$], $\psi^*_{ij} \mu_{ij}=\lambda
_{ij}$ is the nominal service rate
(of type $i$ in pool $j$), $\sum_j \psi^*_{ij} \mu_{ij}=\lambda_{i}$
is the nominal total service rate (of type $i$), $\rho$ is the nominal
total occupancy
(of each pool $j$), etc.
\begin{pf*}{Proof of Theorem~\ref{thmunderload,linearequation}}
Let us choose a suitably small $\varepsilon>0$ (we will specify how small
later). Because the fluid model trajectories are continuous, we can
always choose some $T_2>0$ such that, for all sufficiently small
$\delta>0$, if $\llvert(\psi_{ij}(0)) - (\psi^*_{ij})\rrvert<
\delta$,
then $\llvert(\psi_{ij}(t)) - (\psi^*_{ij})\rrvert<
\varepsilon$ for all $t
\leq T_2$. We will show that $\rho_j(t) = \rho_{j'}(t)$ for all $j,
j' \in\CJ$, in $[T_1,T_2]$ for some $T_1$ depending on $\delta$.

Consider $\rho_*(t) = \min_j \rho_j(t)$, $\rho^*(t) = \max_j \rho_j(t)$
and assume $\rho_*(t)<\rho^*(t)$. Let $\CJ_*(t) = \{j\dvtx\rho
_j(t) =
\rho
_*(t)\}$. As long as $\rho_*(t) < \rho^*(t)$, $\CJ_*(t)$ is of
course a
strict subset of $\CJ$. The total\vspace*{1pt} arrival rate
to servers of type $j\in\CJ_*(t)$ is
$\sum_{i \in\bigcup_{j \in\CJ_*(t)} \CC(j)} \lambda_i$.
By the assumption of the connectedness of the basic activity tree, this
is strictly greater (by a constant) than the nominal arrival rate $\sum
_{i \in\CC(j), j \in\CJ_*(t)} \lambda_{ij}$. The total rate of
departures from those servers is $\sum_{i \in\CC(j), j \in\CJ
_*(t)} \mu_{ij} \psi_{ij}(t)$. For small $\varepsilon$, the assumption
$\llvert(\psi_{ij}(t)) - (\psi^*_{ij})\rrvert< \varepsilon
$ implies that this
is close to the nominal departure rate, so the arrival rate exceeds the
service rate by at least a constant. (This determines what ``suitably
small'' means for $\varepsilon$ in terms of the system parameters.)
Consequently, as long as $\rho_*(t) < \rho^*(t)$, the minimal load
$\rho_*(t)$ is increasing at a rate bounded below by a constant.
Similarly, as long as $\rho_*(t) < \rho^*(t)$, the maximal load $\rho
^*(t)$ is decreasing at a rate bounded below by a constant. Therefore,
the difference $\rho^*(t) - \rho_*(t)$ is decreasing at a rate
bounded below by a constant whenever it is positive. Thus, in finite
time $T_1 = T_1(\delta)$ we will arrive at a state $\rho_*(t) = \rho
^*(t)$. [Clearly, $T_1(\delta)\to0$ as $\delta\to0$.] Since the
function $\rho^*(\cdot) - \rho_*(\cdot)$ is Lipschitz (hence
absolutely continuous), bounded below by 0 and (for $t \leq T_2$) has
nonpositive derivative whenever it is differentiable, the condition
$\rho_*(t) = \rho^*(t)$ will continue to hold for $T_1 \leq t \leq T_2$.

It remains to derive the differential equation, and to show
that $T_2$ can be chosen depending on $\delta$ so that
$T_2 \uparrow\infty$ as $\delta\downarrow0$.

Once we are confined to the manifold $\rho_j(t) = \rho_{j'}(t) = \rho
(t)$ for all $t$, the system evolution is determined in terms of only
$I$ independent variables. Decreasing $\varepsilon$ if necessary to
ensure that there is no queueing while $\llvert(\psi_{ij}(t)) -
(\psi^*_{ij})\rrvert< \varepsilon$,
we can take the $I$ variables to be $\psi_i(t):= \sum_j \psi
_{ij}(t)$. Given
$(\psi_i(t))$ we know $\rho(t)$ as $(\sum_i \psi_i(t)) / (\sum_j
\beta_j)$. Consequently, we know $\sum_i \psi_{ij}(t) = \rho(t)
\beta_j$ and $\sum_j \psi_{ij}(t) = \psi_i(t)$. On a tree, this
allows us to solve for $\psi_{ij}(t)$; the relationship will clearly
be linear, that is,
%
%
\begin{equation}
\label{eqnpsiijintermsofpsii} \bigl(
\psi_{ij}(t)\bigr) = M \bigl(\psi_i(t)\bigr)
\end{equation}
for some matrix $M$. For future reference, we define the (``load
balancing'') linear mapping $M$ from $y\in\BR^I$ to $z= (z_{ij},
(ij)\in\CE) \in\BR^{I+J-1}$ as follows:
$z=My$ is the unique solution of
%
%
\begin{equation}
\label{M-def} \eta= \frac{\sum_i y_i}{\sum_j \beta_j};\qquad
\sum_i z_{ij} = \eta\beta_j\qquad\forall j;\qquad\sum
_j z_{ij} = y_i\qquad\forall i.
\end{equation}

The evolution of $\psi_i(t)$ is given by
%
%
\begin{equation}
\label{eqnunderloadODE} \dot{\psi}_i(t) = \lambda_i -
\sum_j \mu_{ij} \psi_{ij}(t)\qquad
\forall i.
\end{equation}
[This follows from (\ref{eqn5c}) and the fact that $q_i(t)=0$.]
Then, by the above arguments we see that this entails (in matrix form)
%
%
\begin{equation}
\label{eqnunderloadODEmatrix} \bigl(\dot{\psi}_i(t)\bigr) = (
\lambda_i) + A_{u} \bigl(\psi_i(t)\bigr),
\end{equation}
where $A_u$ is an $I \times I$ matrix, $A_u = GM$. Here, $G$ is a $I
\times(I+J-1)$ matrix with entries $G_{i,(kj)} = -\mu_{ij}$ if $i=k$, and
$G_{i,(kj)} = 0$ otherwise.

It remains to justify the claim that $T_2(\delta) \uparrow\infty$ as
$\delta\downarrow0$. This follows from the fact that, as long as
$t\ge T_1$ and $\llvert(\psi_{ij}(t)) - (\psi^*_{ij})\rrvert<
\varepsilon$,
the evolution of the system is described by the linear ODE above. The
solutions have the general form
\begin{eqnarray*}
\psi_\CI(t) - \psi^*_\CI&=& \exp\bigl(A_u(t-T_1)
\bigr) \bigl(\psi_\CI(T_1) - \psi^*_\CI
\bigr), \\
\psi_\CE(t) - \psi^*_\CE&=& M\bigl(
\psi_\CI(t) - \psi^*_\CI\bigr),
\end{eqnarray*}
where $M$ and $A_u$ are constant matrices depending on the system
parameters. Therefore, if $\llvert\psi_\CI(T_1) - \psi^*_\CI
\rrvert\leq
\delta$ is sufficiently small, then the time it takes for $\psi_\CE
(t)$ to escape the set $\llvert\psi_\CE(t) - \psi^*_\CE\rrvert<
\varepsilon
$ can be made arbitrarily large. Since as $\delta\downarrow0$ we have
$T_1(\delta) \downarrow0$, and the system trajectory is Lipschitz,
taking $\llvert\psi_\CE(0) - \psi^*_\CE\rrvert<
\delta$ for small enough
$\delta$ will guarantee that $\llvert\psi_\CI(T_1) - \psi^*_\CI
\rrvert$
is small, and hence we can choose $T_2(\delta) \uparrow\infty$.
\end{pf*}

The proof of Theorem~\ref{thmcriticalload,linearequation} proceeds
similarly; we outline only the differences.
\begin{pf*}{Proof of Theorem~\ref{thmcriticalload,linearequation}}
First, since we assume that $\varepsilon>0$ is sufficiently small
and $\llvert q_i(t) - q\rrvert< \varepsilon$, $i\in\CI$,
for all $t \leq T_2$,
we clearly have $\rho_j(t) = 1$, $j\in\CJ$, for all $t \leq T_2$.
The equality of queue lengths in $[T_1,T_2]$ is shown analogously
to the proof of $\rho_*(t)=\rho^*(t)$ for in the underloaded case.
Namely, the smallest queue must increase and the largest queue must decrease
[as long as not all $q_i(t)$ are equal], because it is getting less
(resp., more) service than nominal [we choose $\varepsilon$ small enough
for this to be true provided $\llvert(\psi_{ij}(t)) - (\psi
^*_{ij})\rrvert<
\varepsilon$]. Thus, in $[T_1, T_2]$ we will have $q_i(t) = q_{i'}(t)$
for all $i, i' \in\CI$.

The linear equation is modified as follows. We have
\[
\dot{x}_i(t) = \lambda_i - \sum
_j \mu_{ij} \psi_{ij}(t),
\]
where $x_i(t) = \psi_i(t) + q_i(t)$. Since we know that all $q_i(t)$
are equal and positive, we have $q_i(t) = q(t) = \frac1I (\sum x_k(t)
- \sum\beta_j)$, and therefore
\[
\dot{\psi}_i(t) = \dot{x}_i(t) - \frac1I \sum
_k \dot{x}_k(t).
\]
The rest of the arguments proceed as above to give
%
%
\begin{equation}
\label{eqncriticalloadODE} \bigl(\dot{\psi}_i(t)\bigr) = \biggl(
\lambda_i - \frac1I \sum_i
\lambda_i\biggr) + A_{c} \bigl(\psi_i(t)
\bigr)
\end{equation}
for the appropriate matrix $A_c$ which can be computed explicitly from
the basic activity tree. (Of course, in $[T_1,T_2]$, the trajectory
$(\psi_{ij}(\cdot))$
uniquely determines
$(\psi_{i}(\cdot))$, $(x_{i}(\cdot))$ and $(q_{i}(\cdot))$.)

Just as above, the existence of the linear ODE, together with the fact
that $T_1 \downarrow0$ as $\delta\downarrow0$, implies that $T_2
\uparrow\infty$ as $\delta\downarrow0$.
\end{pf*}

To compute the matrix $M$, and therefore the matrices $A_u$ and $A_c$,
we will find the following observation useful. If $(\psi_{ij}(t))=M
(\psi_{i}(t))$, then the common value
$\rho(t)=\rho_j(t), \forall j$, is
\[
\rho(t) = \sum_i \psi_i(t) \Big/ \sum
_j \beta_j.
\]
This allows us to find the values $(\psi_{ij}(t))$ from $(\psi_i(t))$
as follows: if $i$ is a customer-type leaf, then $\psi_{ij}(t) = \psi
_i(t)$; if $j$ is a server-type leaf, then $\psi_{ij}(t) = \rho(t)
\beta_j$; we now remove the leaf and continue with the smaller tree.
Inductively, for an activity $i_0 j_0$ we find
%
%
\begin{eqnarray}
\label{eqnpsiij}\quad
\psi_{i_0 j_0}(t) &=& \sum
_{i \preceq(i_0,j_0)} \psi_{i}(t) - \sum
_{j \preceq(i_0,j_0)} \rho(t)\beta_j
\nonumber\\[-8pt]\\[-8pt]
&=&\frac{1}{\sum\beta_j} \biggl(\sum_{i \preceq(i_0,j_0)} \sum
_{j
\preceq(j_0,i_0)} \psi_i(t) \beta_j - \sum
_{i \preceq(j_0,i_0)} \sum_{j \preceq(i_0,j_0)}
\psi_i(t) \beta_j \biggr).
\nonumber
\end{eqnarray}
Here, the relation $\preceq$ is defined as follows.
Suppose we disconnect the basic activity tree by removing the edge
$(i_0,j_0)$. Then for any node $k$ (either customer type or server
type), we say $k \preceq(i_0,j_0)$ if it falls in the same component
as $i_0$; otherwise, $k \preceq(j_0,i_0)$.

%
%
\begin{figure}

\includegraphics{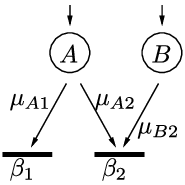}

\caption{Example for calculation of the matrix $M$.}
\label{figAuexample}
\end{figure}

For example, consider the network in Figure~\ref{figAuexample}.
For it, we obtain
\[
\pmatrix{ \psi_{A1}
\cr
\psi_{A2}
\cr
\psi_{B2} } %
= %
\pmatrix{ \displaystyle \frac{\beta_1}{\beta_1 + \beta_2} &
\displaystyle \frac{\beta_1}{\beta_1 + \beta_2} \vspace*{2pt}\cr \displaystyle 1 -
\frac{\beta_1}{\beta_1 + \beta_2} & \displaystyle
-\frac{\beta_1}{\beta_1 + \beta_2} \vspace*{2pt}\cr 0 & 1 }
\pmatrix{ \psi_A
\cr
\psi_B }.
\]

Since in the underload we have
\[
\dot{\psi}_i(t) = \lambda_i - \sum
_j \mu_{ij}\psi_{ij}(t),
\]
%
we obtain an expression for $A_u$, given in Lemma~\ref{lemmaentriesofAu}(i)
just below.
%
%
\begin{lemma}\label{lemmaentriesofAu}
\textup{(i)} The entries $(A_u)_{ii'}$ of the matrix $A_u$ (for the underload
case, $\rho<1$)
are as follows.
The coefficient of $\psi_i$ in $\dot\psi_i$ is
\[
(A_u)_{ii} = -\frac{1}{\sum_j \beta_j} \sum
_{j \in\CS(i)} \mu_{ij} \sum_{j' \preceq(j,i)}
\beta_{j'}.
\]
The coefficient of $\psi_{i'}$ in $\dot\psi_i$ is
\begin{eqnarray*}
(A_u)_{ii'} &=& \frac{1}{\sum_j \beta_j} \biggl[ -\sum
_{j \in\CS
(i), j \neq j_{ii'}} \mu_{ij} \sum_{j' \preceq(j,i)}
\beta_{j'} + \mu_{ij_{ii'}} \sum_{j' \preceq(i,j_{ii'})}
\beta_{j'} \biggr]
\\
&=& (A_u)_{ii} + \mu_{ij_{ii'}}.
\end{eqnarray*}
Here, $j_{ii'} \in\CS(i)$ is the neighbor of $i$ such that, after
removing the edge $(i,j_{ii'})$ from the basic activity tree, nodes $i$
and $i'$ will be in different connected components. (Such a node is
unique, since there is a unique path along the tree from $i$ to $i'$.)

\mbox{}\hphantom{\textup{i}}\textup{(ii)} The matrix $A_u$ is nonsingular.

\textup{(iii)} The matrix $A_u$ depends only on $(\beta_{j})$, $(\mu_{ij})$ and
the basic activity tree structure $\CE$, and does \textup{not}
depend on $(\lambda_{i})$ and $(\psi^*_{ij})$.
\end{lemma}
\begin{pf}
(i) In the proof of Theorem~\ref{thmunderload,linearequation} we
showed $A_u = GM$, where $G$ is a $I \times(I+J-1)$ matrix with
entries $G_{i,(kj)} = -\delta_{ik}\mu_{ij}$,
where $\delta_{ik}$ is the Kronecker's delta function
and $M$ is the $I \times(I+J-1)$ load-balancing matrix whose entries
are determined from the expression (\ref{eqnpsiij}). The form of the
entries for $A_u$ now follows. The equality between the two expressions
for the off-diagonal entries is a consequence of the fact that, for all
$j'$, exactly one of $j' \preceq(ij)$, $j' \preceq(ji)$ holds.

(ii) In the case $\rho<1$,
in the vicinity of the equilibrium point, the derivative $(\dot{\psi
}_{i})=(\lambda_i)+A_u (\psi_i)$
(which can be any real-valued $I$-dimensional vector, within a small
neighborhood of the origin)
uniquely determines $(\psi_{ij})$, and then $(\psi_{i})$ as well.
Indeed, we have the system of $I+J$ linear equations $\lambda_i-\sum_j
\mu_{ij} \psi_{ij} =\dot{\psi}_{i},
\forall i$ and
$\sum_i \psi_{ij} = \hat\rho\beta_j, \forall j$, for the $I+J$
variables $\hat\rho, (\psi_{ij})$.
This system has unique solution,
because $\hat\rho$ is uniquely determined by the workload derivative condition
\[
\sum_i \nu_i \dot{
\psi}_i = \sum_i \nu_i
\lambda_i - \sum_j \hat\rho
\alpha_j,
\]
and then the values of $\psi_{ij}$
are determined by sequentially ``eliminating'' leaves of the basic
activity tree.

(iii) Follows from (i).
\end{pf}

%
%
\begin{lemma}\label{lemmaentriesofAc}
\textup{(i)} The entries $(A_c)_{ii'}$ of the matrix $A_c$ (for the critical
load case, $\rho=1$)
are as follows:
%
%
\begin{equation}
\label{eq-ac2} (A_c)_{ii'} = (A_u)_{ii'}
- \frac1I \sum_{k} (A_u)_{ki'}.
\end{equation}

\mbox{}\hphantom{\textup{i}}\textup{(ii)}  The matrix $A_c$ has rank $I-1$. The $(I-1)$-dimensional subspace
$L=\{y | \sum_i y_i =0\}$ is invariant under the transformation
$A_c$, that is, $A_c L \subseteq L$.
Letting $\pi$ denote the matrix of the orthogonal projection
[along $(1,\ldots,1)^{\dagger}$] onto $L$, we have $A_c = \pi A_u$.
Restricted to $L$, the transformation $A_c$ is invertible.

\textup{(iii)} The linear transformation $A_c$, restricted to subspace $L$,
depends only on $(\mu_{ij})$ and
the basic activity tree structure $\CE$, and does \textup{not}
depend on $(\beta_{j})$, $(\lambda_{i})$ and $(\psi^*_{ij})$.
\end{lemma}
\begin{pf}
(i) The fluid model here is such that there are always nonzero queues,
which are equal across customer types.
We can write
%
%
\begin{eqnarray}
\label{eq-ac1} \dot{\psi}_i(t) &=& \dot{x}_i(t) - \frac1I
\sum_k \dot{x}_k(t) \nonumber\\[-8pt]\\[-8pt]
&=& \biggl(
\lambda_i - \sum_j \mu_{ij}
\psi_{ij}(t)\biggr) - \frac1I \sum_k
\biggl(\lambda_k - \sum_j
\mu_{kj}\psi_{kj}(t)\biggr),\nonumber
\end{eqnarray}
which implies (\ref{eq-ac2}).\vadjust{\goodbreak}

\mbox{}\hphantom{i}(ii) First of all, it is not surprising that $A_c$ does not have full
rank: the linear ODE defining $A_c$ is such that $\sum_i \psi_i(t) =
\sum_j \beta_j$ at all times, so there are at most $(I-1)$ degrees of
freedom in the system. Also, it will be readily seen that (\ref
{eq-ac2}) asserts precisely that $A_c = \pi A_u$. Since $A_u$ is
invertible and $\pi$ has rank $I-1$, their composition has rank $I-1$.
Since the image of $A_c$ is contained in $L$, the image of $A_c$ (as a
map from $\BR^I$) must be equal to all of~$L$.

It remains to check that $A_c$ restricted to $L$ still has rank $I-1$.
To see this, we observe that the simple eigenvalue $0$ of $A_c$ has as
its unique right eigenvector the vector $A_u^{-1}
(1,1,\ldots,1)^{\dagger}$. We will be done once we show that this
eigenvector does not belong to $L$. Suppose instead that $A_u v =
(1,1,\ldots,1)^{\dagger}$ for some $v \in L$, $\sum_i v_i = 0$. Then,
for a small $\varepsilon> 0$, the state $\psi^*_\CI- \varepsilon v$ (with
balanced pool loads, all equal to the optimal $\rho$) would be such
that the derivatives of all components $\psi_i$ would be strictly
negative. This is, however, impossible because the total rate at which
the system workload is served must be zero,
\[
\frac{d}{dt} \sum_i \nu_i
\psi_i = \sum_i \nu_i
\lambda_i - \sum_j \rho
\alpha_j = 0.
\]

(iii) The specific expression (\ref{eq-ac2}) for $A_c$ may depend on
the pool sizes $(\beta_j)$. However, $A_c$ is a singular $I\times I$
matrix, and our claim is only about the transformation of the
$(I-1)$-dimensional subspace $L$ that $A_c$ induces; this
transformation does \textit{not} depend on $(\beta_j)$, as the following
argument shows.

Pick any $(ij)\in\CE$. Modify the original system by replacing $\beta
_j$ by $\beta_j + \delta$ and $\lambda_i$ by $\lambda_i + \delta
\mu_{ij}$; this means that the nominal $\psi^*_{ij}$ is replaced by
$\psi^*_{ij}+\delta$. Then, using notation $\gamma_i(t)=\psi
_i(t)-\psi
^*_i$, the linear ODE
%
%
\begin{equation}
\label{eq-ac3} \bigl(\dot{\gamma}_i(t)\bigr)= A \bigl(
\gamma_i(t)\bigr),
\end{equation}
which we obtain from the ODE
(\ref{eq-ac1}) for the original and modified systems, has exactly the
same matrix $A$, which implies $A=A_c$. Thus, the transformation $A_c$
must not depend on $\beta_j$.

An alternative argument is purely analytic. Recall that to compute
$(A_u)_{ij}$ we used (\ref{eqnpsiij}). In critical load, we have
$\rho(t) \equiv1$, so the (left) equation (\ref{eqnpsiij}) for
$\psi_{i_0 j_0}(t)$ simplifies to
%
%
\begin{equation}
\label{eqnpsiijforrho=1} \psi_{i_0 j_0}(t) = \sum
_{i \preceq(i_0, j_0)} \psi_i(t) - \sum
_{j
\preceq(i_0, j_0)} \beta_j.
\end{equation}
If we substitute this in the right-hand side of (\ref{eq-ac1}), we
will obtain a different expression for $\dot\psi_i(t)$. While its
constant term will depend on $\beta_\CJ$, the linear term will not,
since the linear term of (\ref{eqnpsiijforrho=1}) does not depend
on $\beta_\CJ$. That is, we have found a way of writing down a matrix
for $A_c$ which clearly does not depend on the $\beta_\CJ$.
\end{pf}

\subsection{Definition of local stability}
\label{sec-local-stability-def}

We say that the (fluid) system is \textit{locally stable}, if all fluid
models starting in a sufficiently small neighborhood of an equilibrium point
(which is unique for $\rho<1$; and for $\rho=1$ we consider any equilibrium
point with equal queues $q>0$)
are such that, for fixed constant $C>0$,
\[
\bigl|\bigl(\psi_{ij}(t)\bigr) - \bigl(\psi_{ij}^*\bigr)\bigr| \le
\Delta_0 e^{-Ct},
\]
where\vspace*{1pt} $\Delta_0= |(\psi_{ij}(0)) - (\psi_{ij}^*)| + |(q_i(0)) -
(q,\ldots,q)^{\dagger}|$.
Note that in the case $\rho=1$ it is \textit{not}
required that $q_i(t)\to q$, for $q$ associated with the
chosen equilibrium
point.
However, local stability will guarantee convergence of queues
$q_i(t)\to\overline q$, with some $\overline q > 0$ possibly different
from $q$.
Indeed, the exponentially fast convergence $\psi_\CE(t) \to\psi
^*_\CE$
of the occupancies to the nominal, guarantees that for
some fixed constant $C_1>0$, any $i$ and any $s \ge t\ge0$,
\[
\bigl|x_i(s) -x_i(t)\bigr| \le\int_t^s
\biggl|\lambda_i - \sum_j
\mu_{ij} \psi_{ij}(\xi)\biggr| \,d\xi\le C_1
\Delta_0 e^{-Ct}.
\]
Therefore, each $x_i(t)$, and then each $q_i(t)$, also converges
exponentially fast.
Then we can apply Theorem~\ref{thmcriticalload,linearequation} to
show that all $q_i(t)$ must be equal starting some time point;
therefore they converge to the same value $\overline q$, which is such that
that $\llvert\overline q-q\rrvert\leq C_0 \Delta_0$ for some
constant $C_0>0$
depending only on the system parameters. In other words, local
stability guarantees convergence to an equilibrium point not too far
from the ``original'' one. (We omit further detail, which are rather
straightforward.)

By Theorems~\ref{thmunderload,linearequation} and \ref
{thmcriticalload,linearequation} we see that the local stability
is determined by the stability of a linear ODE, which in turn is
governed by the eigenvalues of the matrix $A_u$ or $A_c$.
We will call matrix $A_u$ stable if
all its eigenvalues have negative real part. We call matrix $A_c$
stable if all its eigenvalues have negative real part, except one
simple eigenvalue
$0$.\setcounter{footnote}{1}\footnote{A matrix $A$
with all eigenvalues having negative real part is
usually called \textit{Hurwitz}. So, $A_u$ stability
is equivalent to $A_u$ being
Hurwitz; while $A_c$ stability definition is slightly
different, due to $A_c$ singularity.
A symmetric matrix $A$ is Hurwitz if and only if
it is negative definite, but neither $A_u$ nor $A_c$ is, in general, symmetric.}
In this terminology, \textit{the local stability of the system is
equivalent to
the stability of
the matrix $A$ in question} (\textit{either $A_u$ or $A_c$}).
On the other hand, if $A$ has an eigenvalue with positive real part,
the ODE has solutions diverging from equilibrium
$(\psi_i^*)$ exponentially fast; if $A$ has
(a pair of conjugate) pure imaginary
eigenvalues,
the ODE has oscillating, never converging solutions.

\subsection{Fluid model as a fluid limit}\label{sectionfluidlimit}

In this section we show that the set of fluid models defined
in Section~\ref{sectionfluidmodel-def}
contains (in the sense
specified shortly) all possible limits of ``fluid scaled'' processes.
We consider a sequence of systems indexed by~$r$, with
the input rates being $\lambda^r_i = r \lambda_i + o(r)$, server pool
sizes being
$\beta_j r$ and the service rates $\mu_{ij}$ unchanged with $r$.
Recall the notation in Section~\ref{sectionLQFS-LB}. We also add the
following notation:\vspace*{9pt}

$A^r_i(t)$ the number of customers of type $i$ who have entered
the system by time $t$ (a Poisson process of rate $\lambda^r_i$);

$S^r_{ij}(t)$ the number of customers of type $i$ who have been
served by servers of type $j$ if a total time $rt$ has been spent on
these services (a Poisson process of rate $\mu_{ij} r$).\vspace*{9pt}

Let\vspace*{2pt} $\Pi^{(a)}_i(\cdot)$, $i\in\CI$, and $\Pi^{(s)}_{ij}(\cdot)$,
$(ij)\in\CE$,
be independent unit-rate Poisson processes. We can assume that, for
each $r$,
\[
A^r_i(t) = \Pi^{(a)}_i\bigl(
\lambda^r_i t\bigr),\qquad S^r_{ij}(t)
= \Pi^{(s)}_{ij}(\mu_{ij} rt).
\]
Then, by the functional strong law of large numbers, with probability 1,
uniformly on compact subsets of $[0,\infty)$,
%
%
\begin{equation}
\label{eqnassumptionsonA,S} \frac1r A^r_i(t) \to
\lambda_i t, \qquad\frac1r S^r_{ij}(t) \to
\mu_{ij} t. 
\end{equation}


We consider the following scaled processes:
\begin{eqnarray*}
x_i^r(t) &=& \frac1r X^r_i(t),\qquad
q_i^r(t) = \frac1r Q^r_i(t),\qquad
\psi^r_{ij}(t) = \frac1r \Psi^r_{ij}(t),
\\
\rho^r_j(t) &=& \frac1r \Xi^r_j(t),\qquad
a^r_i(t) = \frac1r A^r_i(t).
\end{eqnarray*}

%
%
\begin{theorem}
Suppose
\[
\bigl\{\bigl(x^r_i(0)\bigr), \bigl(q^r_i(0)
\bigr), \bigl(\psi^r_{ij}(0)\bigr), \bigl(
\rho^r_j(0)\bigr)\bigr\} \to\bigl\{\bigl(x_i(0)
\bigr), \bigl(q_i(0)\bigr), \bigl(\psi_{ij}(0)\bigr),
\bigl(\rho_j(0)\bigr)\bigr\}.
\]
Then w.p.1 any subsequence of $\{r\}$
contains a further subsequence along which u.o.c.,
\begin{eqnarray*}
&&\bigl\{\bigl(a^r_i(\cdot)\bigr), \bigl(x^r_i(
\cdot)\bigr), \bigl(q^r_i(\cdot)\bigr), \bigl(
\psi^r_{ij}(\cdot)\bigr), \bigl(\rho^r_j(
\cdot)\bigr)\bigr\}
\\
&&\qquad\to\bigl\{\bigl(a_i(\cdot)\bigr), \bigl(x_i(\cdot)\bigr),
\bigl(q_i(\cdot)\bigr), \bigl(\psi_{ij}(\cdot)\bigr),
\bigl(\rho_j(\cdot)\bigr)\bigr\},
\end{eqnarray*}
where the limiting trajectory (on the right-hand side) is a fluid model.
\end{theorem}
\begin{pf}
Given property (\ref{eqnassumptionsonA,S}),
the probability $1$, u.o.c., convergence
along a subsequence
to a Lipschitz continuous set of functions
easily follows.
The only nontrivial properties of a fluid model that need to be
verified for the limit
are (\ref{eqnfluidalgorithm}). Let us consider a regular time $t$:
namely, such that all the components of a limit trajectory have derivatives,
and moreover the minimums and maximums over any subset of components
have derivatives as well.
Consider a sufficiently small interval $[t,t+\Delta t]$,
and consider the behavior of the (fluid-scaled) pre-limit trajectory
in this interval. Then, it is easy to check that the
conditions (\ref{eqnfluidalgorithm}) on the derivatives must hold;
the argument here is very standard---we omit details.
\end{pf}

\section{Special cases in which fluid models are stable}
\label{sectionspecialcases}

In this section we analyze two special cases
of the system parameters, for which we demonstrate
convergence results.
In Section~\ref{secstabilityformuij=muj} we consider the case
when there exists a set of positive $\mu_j$, $j\in\CJ$,
such that $\mu_{ij} = \mu_j$ for $(ij)\in\CE$
[i.e., the service rate $\mu_{ij}$ is constant
across all $i\in\CC(j)$]; we show global convergence of fluid models
to equilibrium.
In Section~\ref{seclocalstabilityformuij=mui} we consider
the case when there exists a set of positive $\mu_i$, $i\in\CI$,
such that $\mu_{ij} = \mu_i$ for $(ij)\in\CE$
[i.e., the service rate $\mu_{ij}$ is constant
across all $j\in\CS(i)$]; we show local stability of the fluid model
(i.e., stability of $A_u$ and $A_c$).

\subsection{\texorpdfstring{Global stability in the case $\mu_{ij} = \mu_j$, $(ij)\in\CE$}
{Global stability in the case mu ij = mu j, (ij) in E}}
\label{secstabilityformuij=muj}

We call the system \textit{globally stable} if
any fluid model, with arbitrary initial state,
converges to an equilibrium point as $t\to\infty$.
[This of course implies $\rho_j(t) \to\rho$ for all $j \in\CJ$ and
$\psi_{ij}(t) \to\psi^*_{ij}$ for all $i \in\CI$, $j \in\CJ$.
Note that, in the underload, the definition
necessarily implies $q_i(t) \to0$ for all $i \in\CI$, while in the
critical load
it requires $q_i(t) \to q$ for all $i \in\CI$ and some $q \geq0$.]
%
%
\begin{theorem}\label{thmglobalstabilityformuij=muj}
The system with $\mu_{ij} = \mu_j$, $(ij)\in\CE$, is globally
stable both for $\rho< 1$ and for $\rho= 1$. In addition, the system
is locally stable as well (i.e.,
the matrices $A_u$ and $A_c$ are stable).
\end{theorem}
\begin{pf}
Consider the underloaded system, $\rho< 1$, first. First, we show that
the lowest load cannot stay too low. Suppose the minimal load $\rho
_*(t) \equiv\min_j \rho_j(t)$ is smaller than $\rho$, and let $\CJ_*(t)
\equiv\{j\dvtx\rho_j(t) = \rho_*(t)\}$. Then all customer types
in $\CC(\CJ_*(t)) \equiv\bigcup_{j\in\CJ_*(t)} \CC(j)$ are
routed to server pools in $\CJ_*(t)$, so the total arrival rate
``into'' $\CJ_*(t)$ is no less than nominal; on the other hand, since
$\mu_{ij} = \mu_j$ and server occupancy is lower than nominal, the
total departure rate ``from'' $\CJ_*(t)$ is smaller than nominal. This
shows that if $\rho_* < \rho-\varepsilon< \rho$, then $\dot\rho_* >
\delta> 0$, where $\delta\geq c \varepsilon$ for some constant $c>0$
(depending on the system parameters). That is, if $\rho_*(t) < \rho$,
then $\dot\rho_*(t) \geq c(\rho-\rho_*(t))$, so $\rho_*(t)$ is
bounded below by a function converging exponentially fast to $\rho$.

Consider a fixed, sufficiently small $\varepsilon>0$;
we know that $\rho_*(t) \geq\rho-\varepsilon$ for all large times $t$.
If some customer class $i$ has a queue $q_i(t) > 0$, then all server
classes $j \in\CS(i)$ have $\rho_j(t) = 1$. It is now easy to see
that the system is serving customers faster than they arrive (because
$\rho<1$ and $\varepsilon$ is small).
This easily implies that all $q_i(t)=0$ after a finite time.

In the absence of queues, we can analyze $\rho^*(t) = \max_j \rho_j(t)$
similarly to the way we treated $\rho_*(t)$; namely, we show that
$\rho^*(t)$ is bounded above by a function
converging exponentially fast to $\rho$, which tells us that $\rho_j(t)
\to\rho$ for all $j$. Once all $\rho_j(t)$ are close enough to
$\rho$,
we can use the argument essentially identical to that in the proof of
Theorem~\ref{thmunderload,linearequation} to conclude that, after a
further finite time, we will have $\rho_j(t) = \rho_{j'}(t)$ for all
$j$, $j'$.
[The argument is even simpler, because, unlike in Theorem \ref
{thmunderload,linearequation},
where it was required that $(\psi_{ij}(t))$ were close to nominal,
here it suffices that $(\rho_j(t))$ are close to nominal,
because of the $\mu_{ij}=\mu_j$ assumption.]
With $\rho(t)=\rho_j(t), \forall j$, we then have for the total
amount of ``fluid'' in the system
\[
(d/dt)\sum_j \beta_j \rho(t) = \sum
_i \lambda_i - \sum
_j \beta_j \rho(t) \mu_j.
\]
This is a simple linear ODE for $\rho(t)$, which implies that (after a
finite time) $\rho(t)-\rho= c_1 \exp(-c_2 t)$, with constant $c_2>0$
and $c_1$.
This in particular means that $\dot\rho_j(t)=\dot\rho(t) \to0$.
Denote by $\hat\lambda_{ij}(t)$ the rate at which fluid $i$ arrives
at pool $j$, namely
%
%
\begin{equation}
\label{eq-psi-ij} \hat\lambda_{ij}(t)=\mu_j
\psi_{ij}(t) + \dot\psi_{ij}(t);
\end{equation}
at any large $t$ we have $\sum_j \hat\lambda_{ij}(t)=\lambda_i$.
Then, for each $j$,
\begin{eqnarray*}
\sum_i \hat\lambda_{ij}(t) &=& \sum
_i \mu_j \psi_{ij}(t) +
\sum_i \dot\psi_{ij}(t) =
\beta_j \mu_j \rho_j(t) +
\beta_j \dot\rho_j(t)
\\
&\to&\beta_j \mu_j \rho= \sum
_i \lambda_{ij}.
\end{eqnarray*}
This is only possible if each $\hat\lambda_{ij}(t)\to\lambda_{ij}$.
But then the ODE (\ref{eq-psi-ij}) implies $\psi_{ij}(t)\to\psi_{ij}^*$.

Now, consider a critically loaded system, $\rho= 1$. Essentially same argument
as above tells us that, as long as not all queues $q_i(t)$ are equal,
each of the longest queues
gets more service than the arrival rate into it,
and so $q^*(t)=\max q_i(t)$ has strictly negative, bounded away from $0$
derivative. If all $q_i(t)$ are equal and positive, then $\dot{q}^*(t)=0$.
We see that $q^*(t)$ is nonincreasing, and so $q^*(t)\downarrow q \ge0$.
We also have $\rho_*(t)\to\rho=1$ exponentially fast.
(Same proof as above applies.)
These facts easily imply convergence to an equilibrium point.
We omit further detail.

Examination of the above proof shows that it implies the following property,
for both cases $\rho<1$ and $\rho=1$. For any fixed equilibrium point
(with $q>0$ if $\rho=1$), there exists a sufficiently small
$\varepsilon
>0$ such that
for all sufficiently small
$\delta>0$, any fluid model starting in the $\delta$-neighborhood
of the equilibrium point, first, never leaves the $\varepsilon$-neighborhood
of the equilibrium point and, second, converges to an equilibrium point
(possibly different from the ``original'' one, if $\rho=1$).
This property cannot hold, unless the system is locally stable; see
Section~\ref{sec-local-stability-def}.
\end{pf}

\subsection{\texorpdfstring{Local stability in the case $\mu_{ij} = \mu_i$, $(ij)\in\CE$}
{Local stability in the case mu ij = mu i, (ij) in E}}
\label{seclocalstabilityformuij=mui}

%
\begin{theorem}
\label{th-loc-stab-spec2}
Assume $\rho<1$ and $\mu_{ij} = \mu_i$ for $(ij)\in\CE$.
Then the system is locally stable (i.e., $A_u$ is stable).
\end{theorem}
\begin{pf}
We have
\[
\dot{\psi}_i(t) = \lambda_i - \mu_i
\psi_i(t)
\]
and $A_u$ is simply a diagonal matrix with entries $-\mu_i$.
%
\end{pf}
%
%
\begin{theorem}
\label{th-loc-stab-spec222}
Assume $\rho= 1$ and $\mu_{ij} = \mu_i$ for $(ij) \in\CE$. Then
the system is locally stable (i.e., $A_c$ is stable).
\end{theorem}
\begin{pf}
As seen in the proof of Theorem~\ref{th-loc-stab-spec2}, the matrix
$A_u$ in this case is diagonal with entries $-\mu_i$. By Lemma \ref
{lemmaentriesofAc}, $A_c$ has off-diagonal entries $(A_c)_{ii'} =
\mu_{i'}/I$ and diagonal entries $-\mu_i(1-1/I)$. That is, its
off-diagonal entries are strictly positive. Therefore, $A_c + \eta I$
for some large enough constant $\eta>0$ (where $I$ is the identity
matrix) is a positive matrix. By the Perron--Frobenius theorem (\cite
{Meyer}, Chapter 8), $A_c + \eta I$ has a real eigenvalue $p + \eta$
with the property that any other eigenvalue of $A_c + \eta I$ is
smaller than $p+\eta$ in absolute value (and in particular has real
part smaller than $p + \eta$). Moreover, the associated \textit{left}
eigenvector $w$
is strictly positive, and is the unique (up to scaling) strictly
positive left eigenvector of $A_c + \eta I$.
Translating these statements to $A_c$, we get: $A_c$ has a real
eigenvalue~$p$; all other eigenvalues of $A_c$ have real part smaller
than $p$; $A_c$ has unique (up to scaling) strictly positive left
eigenvector $w$; and the eigenvalue of $w$ is $p$.

Now, $A_c$ has a positive left eigenvector with eigenvalue $0$,
namely $(1,1,\break\ldots,1)$. Therefore, we must have $p = 0$, and we
conclude that all other (i.e., nonzero)
eigenvalues of $A_c$ have real part smaller than $0$, as required.
\end{pf}

\section{\texorpdfstring{Fluid models for general $\mu_{ij}$: Local instability examples}
{Fluid models for general mu ij: Local instability examples}}
\label{secunderloaddivergence}

In Sec-\break tions~\ref{secstabilityformuij=muj},~\ref{seclocalstabilityformuij=mui}
we have shown that the matrices $A_u$ and $A_c$ are stable in the cases
$\mu_{ij} = \mu_j$, $(ij) \in\CE$ and $\mu_{ij} = \mu_i$, $(ij) \in\CE$.
Since the entries of $A_u$, $A_c$ depend continuously on $\mu_{ij}$
via Lemmas~\ref{lemmaentriesofAu},~\ref{lemmaentriesofAc} and
the eigenvalues of a matrix depend continuously on its entries, we know
that the matrices will be stable for all parameter settings
sufficiently close to those special cases. Therefore, there exists a
nontrivial parameter domain of local stability.
One might consider it to be a reasonable
conjecture that local stability holds for any parameters.
It turns out, however, that this conjecture is false.
We will now construct examples to demonstrate that, in general, the
system can be locally unstable.
%
%
\begin{remark}\label{remchoiceoflambda}
In the examples below, we will specify the parameters $\mu_\CE$ and
sometimes $\beta_\CJ$, but not $\lambda_\CI$. It is easy to
construct values of $\lambda_\CI$ which will make all of the
activities in $\CE$ basic; simply pick a strictly positive vector
$\psi_\CE$,
such that all loads $\sum_i \psi_{ij}/\beta_j$ are equal,
and set $\lambda_i = \sum_j \psi_{ij} \mu_{ij}$. Lemmas~\ref
{lemmaentriesofAu}(iii) and~\ref{lemmaentriesofAc}(iii)
guarantee that the specific values of $\lambda_\CI$ do not affect the
matrices $A_u$, $A_c$. In critical load, we also do not need to specify~$\beta_\CJ$.
\end{remark}

\textit{Local instability example} 1. Consider a system with 3 customer
types $A$, $B$, $C$ and 4 server types $1$ through $4$, connected \mbox{$1 -
A - 2 - B - 3 - C - 4$}. Set $\beta_1 = 0.97$ and $\beta_2 = \beta_3 =
\beta_4 = 0.01$. Set $\mu_{A1} = \mu_{B2} = \mu_{C3} = 1$ and
$\mu_{A2} = \mu_{B3} = \mu_{C4} = 100$. (See Figure \ref
{figunderloadcounterexample1}.)
%
%
\begin{figure}

\includegraphics{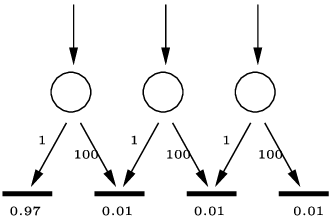}

\caption{System with three customer types whose underload equilibrium
is unstable.}
\label{figunderloadcounterexample1}
\end{figure}
%
On the other hand, we compute by Lem\-ma~\ref{lemmaentriesofAu}
\[
A_u = %
\pmatrix{ -1.99 & -0.99 & -0.99
\cr
97.02 & -2.98
& -1.98
\cr
96.03 & 96.03 & -3.97 } %
\]
with eigenvalues $\{-17.8, 4.45 \pm23.4i\}$.
Therefore by Theorem~\ref{thmunderload,linearequation}, the system
with these parameters is described by an unstable ODE in the
neighborhood of its equilibrium point.

We now show that this is a minimal instability example, in the sense
made precise
by the following:
%
%
\begin{lemma}\label{lemmaunderloadlocalstabilityfortwocustomertypes}
Consider an underloaded system, $\rho<1$.

\begin{longlist}
\item
Let $I\ge2$.
Any customer type $i$ that is a leaf in the basic activity tree, does
not affect the local stability of the system. Namely, let us modify the
system by removing type $i$,
and then modifying (if necessary) input rates $\lambda_{k}$ of
the remaining types $k\in\CI\setminus i$
so that the basic activity tree of the modified system is $\CE
\setminus(ij)$,
where $(ij)$ is the (only) edge in $\CE$ adjacent to $i$.
Then, the original system
is locally stable if and only if the modified one is.

\item A system with two (or one) nonleaf customer types is locally stable.
\end{longlist}
\end{lemma}
\begin{pf}
(i) If type $i$ is a leaf, the equation for $\psi_i(t)$ is simply
$\dot{\psi}_i(t)= \lambda_i - \mu_{ij} \psi_i(t)$.
This means (setting $i=1$) that $(1,0,\ldots,0)^{\dagger}$ is an
eigenvector of
$A_u$ with eigenvalue $-\mu_{ij}$. Further, it is easy to see that: (a)
the rest of the eigenvalues of $A_u$ are those of matrix $A_u^{(-i)}$ obtained
from $A_u$ by removing the first row and first column; and (b)
$A_u^{(-i)}$ is exactly
the ``$A_u$-matrix'' for the modified system.

(ii) We can assume that there are no customer-type leaves.
The case $I=1$ is trivial\vadjust{\goodbreak} (and is covered by Theorem \ref
{thmglobalstabilityformuij=muj}), so let $I=2$. Throughout the proof, the
pool sizes $\beta_j$ are fixed.
From Theorem~\ref{thmglobalstabilityformuij=muj}
we know that for a certain set of service rate values
[namely, $\mu_{ij} = \mu_j$, $(ij)\in\CE$],
the matrix $A_u$ is stable.
Suppose that we continuously vary the parameters $\mu_{ij}$ from those
initial values to
the values of interest, without ever making $\mu_{ij} = 0$.
If we assume that the final matrix $A_u$ is \textit{not} stable, then
as we change $\mu_{ij}$ the (changing) matrix $A_u$ acquires at some point
two purely imaginary eigenvalues.
If the eigenvalues of $A_u$ are purely imaginary, we must have $\trace
(A_u) = 0$.
However, as seen from the form of $A_u$ in Lemma~\ref{lemmaentriesofAu},
the diagonal entries of $A_u$ are always negative, and therefore
$\trace(A_u) < 0$.
The contradiction completes the proof.
\end{pf}

An argument similar to the above proof also allows us to explain how the
instability example 1 was found.
In degree 3, let the characteristic polynomial of $A_u$ be $x^3 - c_2
x^2 + c_1 x - c_0$.
\textit{A necessary and sufficient condition for all roots of the polynomial
to have negative real parts is}: \textit{$- c_2, c_1, - c_0 >0$ and
$c_2 c_1 <
c_0$}; \textit{see}~\cite{Farkas}, \textit{Theorem} 6. A necessary and sufficient
condition for
the ``boundary case'' between stability and instability (i.e., the
condition for a pair of conjugate purely imaginary roots) is $c_2 c_1 = c_0$.
Using Lemma~\ref{lemmaentriesofAu} we can evaluate the
characteristic polynomial symbolically and use the resulting expression
to find parameters for which
$c_2 c_1 = c_0$
will hold. See~\cite{onlinecomputations} online for the computations.

It is possible to construct an instability example
with more reasonable values of $\beta_j$, $\mu_{ij}$, although it
will be bigger. Figure~\ref{figunderloadcounterexample2} shows the
diagram. The associated matrix $A_u$ and its eigenvalues can also be
found online~\cite{onlinecomputations}.

%
%
\begin{figure}[b]

\includegraphics{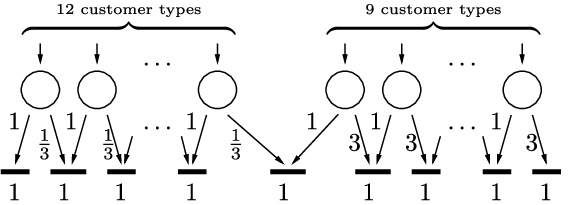}

\caption{System with $\beta_j = 1$ and $\mu_{ij} \in\{1/3,1,3\}$
whose underload equilibrium is unstable. There are 21 customer types.}
\label{figunderloadcounterexample2}
\end{figure}

We do not have an explicit characterization of the local instability
domain, beyond the necessity of $I\ge3$.

We now analyze the critically loaded system $\rho= 1$ with queues,
that is, the stability of the matrix $A_c$.
Recall that the transformation $A_c$, restricted to subspace $\{y
|\sum_i y_i=0\}$,
and then the stability of $A_c$,
does not depend on the values of $\beta_j$,
so it suffices to specify the values $\mu_{ij}$.

\textit{Local instability example} 2.
Consider the network of Figure~\ref{figcriticalloadcounterexample},
which has 5 customer types $A$ through $E$ and 4 server types $1$
through $4$, connected $A - 1 - B - 2 - C - 3 - D - 4 - E$, with the
following parameters:
\begin{eqnarray*}
\mu_{A1} &=& 1,\qquad  \mu_{B1} = 100,\qquad \mu_{B2} = 1,\qquad \mu_{C2} =
100,\\
\mu_{C3} &=& 1,\qquad \mu_{D3} = 100,\qquad \mu_{D4} = 10\mbox{,}000,\qquad \mu_{E4}
= 100.
\end{eqnarray*}

%
\begin{figure}

\includegraphics{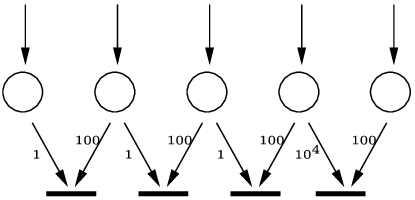}

\caption{System with five customer types whose critical load
equilibrium is unstable.}
\label{figcriticalloadcounterexample}
\end{figure}
%
The matrix $A_c$, computed from Lemma~\ref{lemmaentriesofAc} will
be given by
\[
A_c = \frac{1}{20} %
\pmatrix{ 9389 & 9805 & 10\mbox{,}201 &
10\mbox{,}597 & -29\mbox{,}003
\cr
10\mbox{,}894 & 9290 & 9706 & 10\mbox{,}102 & -29\mbox{,}498
\cr
10\mbox{,}399 & 10\mbox{,}795 & 9191 &
9607 & -29\mbox{,}993
\cr
-40\mbox{,}091 & -39\mbox{,}695 & -39\mbox{,}299 & -40\mbox{,}903 & 119\mbox{,}497
\cr
9409 & 9805 &
10\mbox{,}201 & 10\mbox{,}597 & -31\mbox{,}003 } %
\]
and the eigenvalues of $A_c$ are $\{0, -16.88, -2190.05, 2.565 \pm
23.23i\}$.

Again, the above example 2 is in a sense minimal:
%
%
\begin{lemma}
Consider a critically loaded system, $\rho=1$.

\begin{longlist}
\item
Let $J\ge2$.
Any server type $j$ that is a leaf in the basic activity tree does not
affect the local stability of the system. Namely, let us modify the
system by removing type $j$,
and then replacing $\lambda_i$ for the unique $i$ adjacent to $j$ by
$\lambda_i -
\beta_j \mu_{ij}$. Then, the original system
is locally stable if and only if the modified one is.

\item Consider a system labeled $S$. We say that a system $S'$ is an
\textup{expansion}
of system $S$ if it is obtained from $S$ by the following modification.
We pick one server type $j$ and one customer type $i$ adjacent to it in
$\CE$;
we ``split'' type $j$ into two types $j'$ and $j''$;
we ``connect'' type $i$ to both $j'$ and $j''$; each of the remaining types
$i' \in\CC(j) \setminus i$ we connect to either $j'$ or $j''$ (but
not both);
if $(i'j')$ [resp., $(i'j'')$] is a new edge, we set $\mu_{i'j'}=\mu_{i'j}$
(resp., $\mu_{i'j''}=\mu_{i'j}$). Then, $S$ is locally stable if and
only if $S'$ is.

\item A system with four or fewer customer types is locally stable.
\end{longlist}
\end{lemma}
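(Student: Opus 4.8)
\textbf{Part (i)} is the exact dual of Lemma~\ref{lemma:underload local stability for two customer types}(i), with customer/underload replaced by server/critical load, and I would prove it the same way. Near a critically loaded equilibrium one has $\rho_j(t)\equiv 1$ (see the proof of Theorem~\ref{thm:critical load, linear equation}), so a server-type leaf $j$ with $\CC(j)=\{i\}$ satisfies $\psi_{ij}(t)\equiv\rho_j(t)\beta_j=\beta_j$, a constant. Hence in the evolution \eqref{eq-ac1} of $x_i$ the term $\mu_{ij}\psi_{ij}(t)$ is constant, and subtracting $\beta_j\mu_{ij}$ from $\lambda_i$ exactly accounts for deleting $j$. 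Removing the server-leaf $j$ from the tree does not move any customer node to the opposite side of any remaining edge, so the identities \eqref{eqn:psi_ij for rho=1} for every surviving activity are unchanged; consequently the linear ODE \eqref{eqn:critical load ODE}, and in particular the transformation $A_c$ restricted to $L$, is literally the same for the original and the modified systems. Since local stability is equivalent to stability of $A_c$ (Theorem~\ref{thm:critical load, linear equation} and Section~\ref{sec-local-stability-def}), (i) follows. (One checks en route that the modified system is again a critically loaded CRP system with an equilibrium having $q>0$, which is routine.)

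For \textbf{part (ii)} the plan is again to show that the expansion leaves the transformation $A_c|_L$ (and hence local stability) unchanged. The customer set of $S$ and $S'$ is the same, so both act on $\BR^I$ and on $L=\{y:\sum_i y_i=0\}$. The key point is that replacing $j$ by the blown-up configuration $j'-i-j''$ does not alter, for \emph{any} tree edge, which customers lie on either side of it --- $j$ is a server, and every customer-to-customer path that passed through $j$ now passes through $j'$, or through $j''$, or through the segment $j'-i-j''$. Hence by \eqref{eqn:psi_ij for rho=1} the linear part (as a function of $(\psi_k)_{k\in\CI}$) of each $\psi_{i'j'}$ or $\psi_{i'j''}$ with $i'\ne i$ equals that of $\psi_{i'j}$ in $S$, and --- provided the two new edges at $i$ carry $\mu_{ij'}=\mu_{ij''}=\mu_{ij}$, which is what makes the statement hold --- the linear part of $\mu_{ij'}\psi_{ij'}+\mu_{ij''}\psi_{ij''}$ equals that of $\mu_{ij}\psi_{ij}$. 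Substituting into \eqref{eq-ac1} shows the $(\psi_i)$-ODEs of $S$ and $S'$ have identical linear parts, i.e. the same $A_c|_L$.

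For \textbf{part (iii)} I would first reduce the topology. By (i) we may assume there are no server-type leaves; then, applying (ii) finitely many times (split any server of degree $\ge 3$ through an adjacent customer into a degree-$2$ server and a smaller one, repeating until every server has degree exactly $2$), we may assume $J=I-1$, and contracting each degree-$2$ server to an edge between its two customers exhibits the system as a tree on the $I$ customer nodes. For $I\le 4$ the only such ``customer trees'' are the paths $P_1,\dots,P_4$ and the star $K_{1,3}$. Next, a direct computation from Lemmas~\ref{lemma:entries of A_u} and \ref{lemma:entries of A_c} gives
\[
\trace(A_c)\;=\;-\frac1I\sum_{(ij)\in\CE}\mu_{ij}\,n_{ij},
\]
where $n_{ij}\ge 0$ is the number of customer nodes on the side of edge $(ij)$ away from $i$; this is strictly negative once $I\ge 2$, and it equals $\trace(A_c|_L)$ since $0$ is a simple eigenvalue of $A_c$. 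Combined with the invertibility of $A_c|_L$ (Lemma~\ref{lemma:entries of A_c}(ii)) and a homotopy, this disposes of $I\le 3$: deform $\mu_\CE$ inside the connected set $(0,\infty)^{\CE}$ from the known-stable configuration $\mu_{ij}=\mu_i$ (Theorems~\ref{th-loc-stab-spec2}, \ref{th-loc-stab-spec222}) to the given one; along this path $A_c|_L$ is a continuous family of real matrices of size $\le 2$, all invertible and with negative trace, and such a family that is Hurwitz at one point stays Hurwitz (for size $1$ the eigenvalue is the trace; for size $2$, invertibility forbids a real eigenvalue from crossing $0$ and the nonzero trace forbids a conjugate imaginary pair from crossing the axis).

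The case $I=4$ is the one requiring real work. Here $A_c|_L$ is $3\times 3$; writing its characteristic polynomial as $x^3-c_2x^2+c_1x-c_0$, we have $c_2=\trace(A_c|_L)<0$ from above, and $c_0=\det(A_c|_L)$ is never zero (Lemma~\ref{lemma:entries of A_c}(ii)), hence keeps its sign along any homotopy, staying negative. By the Routh--Hurwitz criterion recalled after Lemma~\ref{lemma:underload local stability for two customer types} (Hurwitz iff $-c_2,c_1,-c_0>0$ and $c_2c_1<c_0$), given $c_2<0$ and $c_0<0$ the only boundary that can be crossed in a homotopy from a stable configuration is $c_2c_1=c_0$ (and there $c_1>0$, corresponding to a conjugate pair of purely imaginary eigenvalues). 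So it suffices to verify, for each of the two topologies $P_4$ and $K_{1,3}$, that $c_2c_1<c_0$ strictly for all positive values of the six rate parameters, computing $A_c=\pi A_u$ from the $\beta$-free form of $A_u$ given by \eqref{eqn:psi_ij for rho=1}. I expect this symbolic verification --- the counterpart of the computations \cite{online_computations} used for the instability examples --- to be the main obstacle: the characteristic polynomial is an unwieldy six-parameter expression, and one must present $c_0-c_2c_1$ as a manifestly positive quantity, presumably exploiting that in customer-tree form every off-diagonal entry of $A_u$ is $\le 0$.
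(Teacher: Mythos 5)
Your overall route coincides with the paper's. Parts (i) and (ii) are proved there exactly as you propose: one checks that the original and modified systems share the same linear ODE \eqn{eq-ac3}, i.e.\ the same transformation $A_c$ restricted to $L$ (for (i) this is the same mechanism as the $\beta$-independence argument in Lemma~\ref{lemma:entries of A_c}(iii)). Part (iii) is also proved by the reduction you describe: eliminate server leaves by (i), use expansions (ii) to pass to two maximal four-customer topologies --- your $P_4$ and $K_{1,3}$ are precisely the ``bottom-left'' and ``right'' shapes in Figure~\ref{fig:shapes of trees with 4 customer types} --- and then verify the degree-3 stability criterion of \cite[A1.1.1]{Farkas} for general positive rates. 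Your additions are correct and useful sharpenings of the bookkeeping: the trace formula $\trace(A_c)=-\frac1I\sum_{(ij)\in\CE}\mu_{ij}n_{ij}<0$, the homotopy from $\mu_{ij}=\mu_i$ (Theorem~\ref{th-loc-stab-spec222}), and the observation that, since $\det(A_c|_L)\neq 0$ (Lemma~\ref{lemma:entries of A_c}(ii)) and the trace stays negative, the only boundary that could be crossed is $c_2c_1=c_0$; this disposes of $I\le 3$ cleanly, whereas the paper simply verifies all conditions symbolically.

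Two caveats. First, the decisive step of (iii) --- establishing $c_2c_1\neq c_0$ (equivalently the full criterion) for all positive rates on the two four-customer topologies --- is exactly what you leave undone, flagging it as ``the main obstacle''. That computation is the substance of the paper's proof at this point: it writes out $A_c$ via Lemma~\ref{lemma:entries of A_c}, factors out the zero root to get a cubic, and verifies the criterion symbolically, with the computation relegated to \cite{online_computations}; so your proposal reduces (iii) to the same finite symbolic check but does not complete it. Second, a small imprecision in (ii): computing from \eqref{eqn:psi_ij for rho=1}, the linear part of $\mu_{ij}(\psi_{ij'}+\psi_{ij''})$ is \emph{not} literally equal to that of $\mu_{ij}\psi_{ij}$; the two ``$i$-sides'' of the new edges cover $\CI$ once more than the old $i$-side, so the difference is $\mu_{ij}\sum_{k\in\CI}\psi_k$. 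Since this is a multiple of $\sum_k\psi_k$, it vanishes on $L$ (equivalently, is constant along critically loaded trajectories), so your conclusion that $A_c|_L$ is unchanged survives --- but the equality should be asserted only modulo the direction $(1,\dots,1)^{\dagger}$, as the claim is false for the full $I\times I$ matrices.
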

\begin{pf}
(i) The argument here is a ``special case'' of the one used to show
the independence of transformation
$A_c$ [restricted to $(I-1)$-dimensional invariant subspace]
from $(\beta_j)$ in the proof of Lemma~\ref{lemmaentriesofAc}.
Namely, it is easy to check that
the original system and the modified system share exactly same
ODE (\ref{eq-ac3}).

\mbox{}\hphantom{i}(ii) Again, it is easy to see that the two systems share the same ODE~(\ref{eq-ac3}).

(iii) We can assume that there are no server-type leaves, so that the
tree $\CE$ has only customer-type leaves, of which it can have two,
three, or four.

If it has four customer-type leaves, then the tree has a total of four
edges, hence five nodes, that is, a single server pool, to which all
the customer types are connected.

%
%
\begin{figure}[b]

\includegraphics{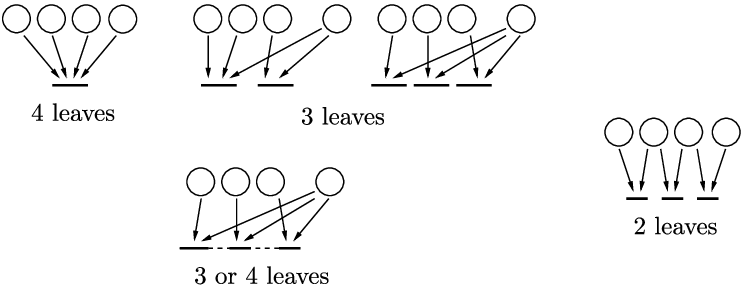}

\caption{Possible arrangements of four customer types.}
\label{figshapesoftreeswith4customertypes}
\end{figure}

If the tree has three customer-type leaves, then letting $k$ be the
number of edges from the fourth customer type, we have $k+3$ total
edges, so $k+4$ nodes, of which $k$ are server types. That is, the
nonleaf customer type is connected to all of the server types. Since
there are no server-type leaves, we must have $k \leq3$; since we are
assuming the fourth customer type is not a leaf, we must have $k \geq
2$; thus, $k = 2$ or $k=3$.

The last case is of two customer-type leaves. Letting $k, l$ be the
number of edges coming out of the other customer types, we have $k + l
+ 2$ edges. On the other hand, since each server type has at least 2
edges coming out of it, we have at most $(k+l+2)/2$ server types, so at
most $(k+l+2)/2+4$ nodes. Thus, we have $(k+l+2)+1 \leq(k+l+2)/2 + 4$,
or $k+l+2 \leq6$, giving $k=l=2$ (since they must both be $\geq2$).

We summarize the possibilities in Figure \ref
{figshapesoftreeswith4customertypes}.
Note that the bottom-left system can be obtained by a sequence of
expansions from each of the top-left systems, and so this is the only system
we need to consider to establish local stability for all 3- and 4-leaf cases.
Thus, in total, the only two systems that need to be considered are
bottom-left and right.
In both of the resulting cases, we can use Lemma~\ref{lemmaentriesofAc}\vadjust{\goodbreak}
to write out $A_c$ and its characteristic polynomial explicitly.
The characteristic polynomial will have degree 4, but one of its roots
is 0, so we can reduce it to degree 3.
We then symbolically verify that the cited above stability criterion
(\cite{Farkas}, Theorem 6) for degree 3 polynomials, is satisfied. See
\cite{onlinecomputations} online for the details.
\end{pf}

An argument similar to that in the above proof
allows us to explain how the
instability example 2 was found.
We seek a condition satisfied by the coefficients of a degree 4 polynomial
with two imaginary roots. Letting the polynomial be $x^4 - c_1 x^3 +
c_2 x^2 - c_3 x + c_4$, and letting the roots be $\eta_1$, $\eta_2$,
$\pm iz$ (where $\eta_1$ and $\eta_2$ may be real or complex
conjugates, and $z \in\BR$), we see that $c_1 = \eta_1 + \eta_2$,
$c_2 = \eta_1 \eta_2 + z^2$, $c_3 = (\eta_1 + \eta_2) z^2$ and $c_4
= \eta_1 \eta_2 z^2$.
This implies the relation
$c_4 c_1^2 + c_3^2 - c_1 c_2 c_3 = 0$, and we can find the parameters
for which this is true. (The symbolic calculation will involve rather a
lot of terms.)
We remark that, whereas for degree 3 polynomials the condition $c_2 c_1
- c_0=0$
is necessary and sufficient for the existence of two imaginary roots
(\cite{Farkas}, Theorem 6),
the condition we derive here for degree 4 polynomials is necessary, but
not sufficient.
[E.g., the polynomial $(x-1)^2(x+1)^2$ has $c_1 = c_3 = 0$, so
$c_4 c_1^2 + c_3^2 - c_1 c_2 c_3 = 0$, but it has no imaginary roots.]
Thus, checking the sign of
the corresponding expression alone is insufficient to determine whether
the system is unstable, but is a useful way of narrowing down the
parameter ranges.

Finally, it is possible to construct a single system which will be
unstable both for $\rho< 1$ and for $\rho= 1$ with positive queues.
For the local stability of the underloaded system, the leaves of the
basic activity tree corresponding to customer types are irrelevant (the
corresponding occupancy on the sole available server class converges to
nominal exponentially). On the other hand, for the critically loaded
system, the leaves corresponding to server pools are irrelevant, since
the corresponding server is fully occupied by its unique available
customer type. This observation allows us to merge the above two
systems into a single one which is unstable both in underloaded and in
the critically loaded case.

Consider a system with 5 customer types $A$ through $E$ and 5 server
types $0$ through $4$ connected as $0 - A - 1 - B - 2 - C - 3 - D - 4 -
E$, with $\mu_{A0}=100$ and the remaining $\mu_{ij}$ as in the
critically loaded case. Set $\beta_3 = 0.96$ while $\beta_0, \beta_1,
\beta_2, \beta_4 = 0.01$; see Figure \ref
{figunderloadandcriticalloadcounterexample}.
%
%
\begin{figure}

\includegraphics{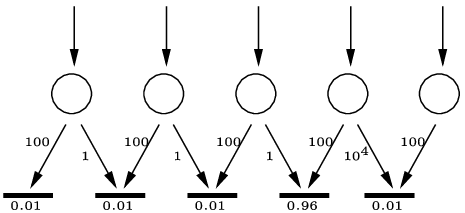}

\caption{System with five customer types whose underload and critical
load equilibrium points are both unstable.}
\label{figunderloadandcriticalloadcounterexample}
\end{figure}
By the above discussion, this system must be unstable for $\rho= 1$
and positive queues. We therefore need to consider only the first 4
customer types ($E$ is a customer-type leaf and does not matter) in
underload. We compute
\[
A_{u} = %
\pmatrix{ -1.99 & -0.99 & -0.99 & -0.99
\cr
97.02
& -2.98 & -1.98 & -1.98
\cr
96.03 & 96.03 & -3.97 & -2.97
\cr
-99 & -99 & -99 &
-199 } %
\]
and eigenvalues are $\{-14.6, -201.1, 3.91 \pm18.1 i\}$.\vadjust{\goodbreak}

While we showed above that sufficiently small systems are at least
locally stable,
we will show now that, in the underload case,
any sufficiently large system is locally unstable for some parameter settings.
%
%
\begin{lemma}
In underload ($\rho< 1$), any shape of basic activity tree that
includes a
locally unstable system (i.e., with $A_u$ having an eigenvalue with
positive real part)
as a subset will, with some set of parameters $(\beta_j)$, $(\mu
_{ij})$, become locally unstable. In particular, any shape of basic
activity tree that includes
instability
example 1 (Figure~\ref{figunderloadcounterexample1}) above (for
$\rho<1$) will be locally
unstable for some set of parameters $\beta_j$, $\mu_{ij}$.
\end{lemma}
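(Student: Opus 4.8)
The plan is to prove, by induction on $n = |V(T)| - |V(\CE_S)| \ge 0$, the following statement: if $T$ is the shape of a basic activity tree, $\CE_S \subseteq T$ is a subtree, and there is a choice of parameters $(\beta_j),(\mu_{ij})$ on $\CE_S$ for which $A_u^{\CE_S}$ has an eigenvalue with positive real part, then there is a choice of parameters on $T$ for which $A_u^T$ has an eigenvalue with positive real part. Note first that this suffices: by Lemma~\ref{lemma:entries of A_u}(i) the matrix $A_u^T$ is a function of $((\beta_j),(\mu_{ij}),T)$ alone, so once suitable $(\beta_j),(\mu_{ij})$ are fixed we may invoke Remark~\ref{rem: choice of lambda} to choose $\lambda_\CI$ realizing $T$ as the basic activity tree, and then rescale all $\psi^*_{ij}$ (equivalently $\lambda_\CI$) by a small positive factor to get $\rho<1$; by Lemma~\ref{lemma:entries of A_u}(iii) neither operation changes $A_u^T$, so Theorem~\ref{thm:underload, linear equation} gives local instability. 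Applying this with $\CE_S$ the tree of instability example~1 (Figure~\ref{fig:underload counterexample 1}), whose $A_u$ has the eigenvalues $4.45 \pm 23.4 i$, yields the ``in particular'' assertion.

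The base case $n=0$ is trivial: $T = \CE_S$ and we keep the given parameters. For the inductive step, I would first argue that $T$ has a leaf $\ell \notin V(\CE_S)$: the forest $T \setminus \CE_S$ is nonempty, and each of its components either is a single node (then a leaf of $T$) or has at least two leaves, at most one of which is its attachment point into $\CE_S$. Fix such a leaf $\ell$, let $v$ be its unique neighbour, and put $T' = T - \ell$; then $\CE_S \subseteq T'$ and $|V(T')| - |V(\CE_S)| = n-1$, so the induction hypothesis supplies parameters on $T'$ for which $A_u^{T'}$ has an eigenvalue $z_0$ with $\operatorname{Re} z_0 > 0$. (Throughout, the number of customer types stays at least that of $\CE_S$, which is at least $3$ by Lemma~\ref{lemma:underload local stability for two customer types}(ii), so any invocation of Lemma~\ref{lemma:underload local stability for two customer types}(i), which requires $I\ge 2$, is legitimate.) Now split on the type of $\ell$.

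If $\ell$ is a customer type, keep the $T'$-parameters and assign an arbitrary $\mu_{\ell v}>0$. Since $\ell$ is a leaf one has $\dot\psi_\ell = \lambda_\ell - \mu_{\ell v}\psi_\ell$, so, exactly as in the proof of Lemma~\ref{lemma:underload local stability for two customer types}(i), $-\mu_{\ell v}$ is an eigenvalue of $A_u^T$ and the complementary block of $A_u^T$ equals $A_u^{T'}$ (using that $A_u$ does not depend on $\lambda_\CI$); hence $z_0$ remains an eigenvalue of $A_u^T$. If instead $\ell$ is a server pool attached to the customer type $v$, keep the $T'$-parameters, fix $\mu_{v\ell}:=1$, and set $\beta_\ell := \epsilon > 0$. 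No new customer type is added, so $A_u^T$ and $A_u^{T'}$ have the same dimension, and I would check from the explicit formulas of Lemma~\ref{lemma:entries of A_u}(i) that $A_u^T \to A_u^{T'}$ entrywise as $\epsilon \downarrow 0$: the normalizing sum $\sum_j \beta_j$ increases only by $\epsilon$; the diagonal entry $(A_u^T)_{vv}$ acquires the extra summand $-\mu_{v\ell}\beta_\ell/\sum_j\beta_j = O(\epsilon)$; for $i \ne v$ the only effect is that certain sums $\sum_{j'\preceq(j,i)}\beta_{j'}$ pick up the term $\beta_\ell=\epsilon$; and every off-diagonal entry is its own diagonal entry plus an unchanged $\mu_{i,\cdot}$. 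Continuity of the spectrum then guarantees that for all sufficiently small $\epsilon>0$ the matrix $A_u^T$ has an eigenvalue with positive real part, completing the induction.

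The only step involving genuine computation is this last entrywise convergence $A_u^T \to A_u^{T'}$ upon attaching a server-pool leaf with vanishing $\beta_\ell$, and I expect it to be the main (though routine) obstacle; everything else is combinatorial bookkeeping about which leaves are removed and the observation, already used repeatedly in the excerpt, that $A_u$ is independent of $\lambda_\CI$ and $\psi^*_{\CE}$ and that the spectrum depends continuously on the matrix entries.
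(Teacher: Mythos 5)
Your proof is correct, and its core mechanism is the one the paper itself uses: shrink the pool sizes attached outside the unstable subsystem to zero and invoke continuity of the spectrum in the matrix entries, together with Lemma~\ref{lemma:entries of A_u}(iii) and Remark~\ref{rem: choice of lambda} to realize the enlarged tree as the basic activity tree (in underload) without changing $A_u$. The organization, however, is genuinely different. The paper performs a single one-shot limit: it sets $\beta_j=\epsilon\to 0$ simultaneously on all pools outside the unstable subsystem $U$ (with the corresponding $\lambda_i^\epsilon\to 0$ outside $U$), and argues that the top-left block of $A_u^\epsilon$ converges to the $A_u$ of $U$ while the coupling (bottom-left) block converges to $0$, so the limiting matrix is block triangular and every eigenvalue of $A_u(U)$ is a limit of eigenvalues of $A_u^\epsilon$. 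You instead peel the complement one leaf at a time by induction, treating an attached customer-type leaf exactly (block triangularity, as in Lemma~\ref{lemma:underload local stability for two customer types}(i), with no limit needed) and an attached server-pool leaf by a single-pool perturbation $\beta_\ell=\epsilon\downarrow 0$, whose entrywise effect on $A_u$ you read off directly from Lemma~\ref{lemma:entries of A_u}(i); your verification of that entrywise convergence, and of the existence of a leaf of $T$ outside $\CE_S$, is sound. What your route buys is that each step is a small local computation and the vanishing of the coupling block for extra customer types never has to be checked (it is exact in your customer-leaf step); what the paper's route buys is brevity -- one limit, no induction or leaf-selection bookkeeping. Both arguments rest on the same two pillars (independence of $A_u$ from $\lambda_\CI$, $\psi^*_\CE$, and continuity of eigenvalues), and yours is complete at the same level of rigor as the paper's.
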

\begin{pf}
Let $U$ be any system whose underload ($\rho< 1$) equilibrium is
locally unstable,
for example, one of the examples given above, with the
associated fixed set of parameters $\mu_{ij}$, $\beta_j$ and $\lambda_i$.
Let $S$ be a system including $U$ as a subset, namely:
the activity tree of $S$ is a superset of that of $U$;
the $\mu_{ij}$ and $\beta_j$ in $U$ are preserved in $S$;
the $\mu_{ij}$ in $S$ are fixed.
Consider a sequence of systems $S^\varepsilon$ in which $\beta_j =
\varepsilon\to0$ for all $j$ not in $U$.
For each $\varepsilon$, take $\lambda_{i}^{\varepsilon}$
so that all of the activities are indeed basic,
and such that, as $\varepsilon\to0$, $\lambda_{i}^{\varepsilon}\to
\lambda_i$
for $i$ in $U$, and $\lambda_{i}^{\varepsilon}\to0$ for $i$ not in $U$;
see Remark~\ref{remchoiceoflambda}.
Order the $\psi_i$ so that the customer types $i$ in $U$ come first.
Suppose there are $I$ customer types in $U$, and $I+k$ customer types
in $S$. Let $A_u^\varepsilon$ be the $(I+k) \times(I+k)$ matrix
associated with $S^\varepsilon$, and let $A_u$ be the $I \times I$ matrix
associated with $U$ considered as an isolated
system.
Then as $\varepsilon\to0$ the top left $I \times I$ entries of
$A_u^\varepsilon$ converge to $A_u$, while the bottom left $k \times I$
entries of $A_u^\varepsilon$ converge to 0
(i.e., the effect of $U$ on the stability of
the rest of the system vanishes---this is due to the fact that pool
size parameters
$\beta_j$ in $U$ remain constant, while $\beta_j\to0$ in the rest of
the system).
Consequently, each eigenvalue of $A_u$ is a limit of eigenvalues of
$A_u^\varepsilon$. Since $A_u$ had an eigenvalue with positive real part,
for sufficiently small $\varepsilon$ the matrix $A_u^\varepsilon$
will have
at least one eigenvalue with positive real part as well, so the system
$S^\varepsilon$ will be locally unstable.
\end{pf}

\section{Diffusion scaled process in an underloaded system. Possible
evanescence of invariant distributions}\label{sectiondiffusionlimit}

Above we have shown that on a fluid scale, around the equilibrium
point, the system converges to a subset of its possible states, on
which it evolves according to a differential equation, possibly
unstable. This strongly suggests that, when the differential
equation is unstable, the stochastic system is in fact ``never'' close
to equilibrium.
Our goal in this section is to demonstrate that it is the case at least
on the diffusion scale. More precisely,
we consider the system in underload, $\rho< 1$, and look at
diffusion-scaled stationary distributions (centered at the equilibrium
point and scaled down
by $\sqrt{r}$); we show that, when the associated fluid model is
locally unstable, this sequence of stationary distributions is such
that the measure of any compact set vanishes.

\subsection{Transient behavior of diffusion scaled process.
State space collapse}
\label{sec-diffusion-finite}

In this section we cite the diffusion limit result
(for the process transient behavior)
that we will need from~\cite{GurvichWhitt}.
Again, we consider a sequence of systems indexed by~$r$, with
the input rates being $\lambda^r_i = r \lambda_i$, server pool sizes being
$\beta_j r$, and the service rates $\mu_{ij}$ unchanged with $r$.
[Here we drop the $o(r)$ terms in $\lambda^r_i = r \lambda_i + o(r)$,
because, when $\rho<1$, considering these terms does not make sense.]
The notation for the unscaled processes is the same as in the previous
section; however, we are now interested in a different---diffusion---scaling.
We define
%
%
\begin{eqnarray}
\label{eqndiffusionscale} \hat\Psi^r_{ij}(t) &=&
\frac{\Psi^r_{ij}(t) - r \psi_{ij}^*}{\sqrt{r}},\qquad \hat\Psi^r_i(t)
= \sum
_j \hat\Psi^r_{ij}(t),
\nonumber\\[-8pt]\\[-8pt]
\hat\Psi^r_j(t) &=& \sum_i
\hat\Psi^r_{ij}(t) = \frac{\Psi^r_j(t)
- \rho r \beta_j}{\sqrt{r}}.
\nonumber
\end{eqnarray}
We will denote by $M'$ the linear mapping from $z= (z_{ij}, (ij)\in\CE
) \in\BR^{I+J-1}$ to $y=(y_i)\in\BR^I$, given by $\sum_j z_{ij} =
y_i$. [So,
$(\hat\Psi^r_i(t)) \equiv M' (\hat\Psi^r_{ij}(t))$.] There is the
obvious relation
between $M'$ and the operator $M$ defined by (\ref{M-def}): $M'M y = y$
for any
$y\in\BR^I$. Let us define $\CM:= \{My | y\in\BR^I\}$, an
$I$-dimensional linear subspace of $\BR^{I+J-1}$; equivalently,
$\CM= \{z\in\BR^{I+J-1} | z=MM'z\}$.
%
%
\begin{theorem}[(Essentially a corollary of Theorems 3.1 and 4.4
in~\cite{GurvichWhitt})]\label{thmdiffusionSDE}
Let $\rho< 1$. Assume that as $r\to\infty$, $\hat\Psi^r_\CE(0)\to
\hat\Psi_\CE(0)$ where $\hat\Psi_\CE(0)$ is deterministic and
finite. [Consequently, $\hat\Psi^r_\CI(0)\to\hat\Psi_\CI(0)= M'
\hat\Psi_\CE(0)$.] Then,
%
%
\begin{equation}
\label{eq-psi-i-conv} \hat\Psi^r_\CI(\cdot) \implies\hat
\Psi_\CI(\cdot) \qquad\mbox{in $D^{I}[0,\infty)$}
\end{equation}
and for any fixed $\eta>0$,
%
%
\begin{equation}
\label{eq-psi-ij-conv} \hat\Psi^r_\CE(\cdot) \implies M
\hat\Psi_\CI(\cdot) \qquad\mbox{in $D^{I+J-1}[\eta,\infty)$},
\end{equation}
where
$\hat\Psi_\CI(\cdot)$ is the unique solution of the SDE
%
%
\begin{equation}
\label{eq-psi-limit}\qquad \hat\Psi_i(t) = \hat\Psi_i(0) -
\sum_{j \in\CS(i)} \mu_{ij} \int
_0^t \bigl(M \hat\Psi_\CI(s)
\bigr)_{ij} \,ds + \sqrt{2\lambda_i} B_i(t),\qquad
i\in\CI,
\end{equation}
and the processes $B_i(\cdot)$ are independent standard Brownian motions.
\end{theorem}

Recalling the definition of matrix $A_u$ [see (\ref
{eqnunderloadODEmatrix})], (\ref{eq-psi-limit}) can be written as
%
%
\begin{equation}
\label{eqnformofODE} \hat\Psi_\CI(t) = \hat\Psi_\CI(0)
+ \int_0^t A_u \hat
\Psi_\CI(s) \,ds + \bigl(\sqrt{2\lambda_i}
B_i(t)\bigr).
\end{equation}

The meaning of Theorem~\ref{thmdiffusionSDE} is simple: the
diffusion limit of the process $\hat\Psi^r_\CI(\cdot)$ is such
that, at initial time $0$, it ``instantly jumps'' to the state $MM'
\hat\Psi_\CE(0)$ on the manifold $\CM$ [where $MM' \hat\Psi_\CE
(0) = \hat\Psi_\CE(0)$ only if $\hat\Psi_\CE(0)\in\CM$]; after
this initial jump, the process stays on $\CM$ and evolves according to
SDE (\ref{eqnformofODE}). Theorem~\ref{thmdiffusionSDE} is
``essentially a corollary'' of results in~\cite{GurvichWhitt},
because the setting in~\cite{GurvichWhitt} is such that $\rho=1$,
while we assumed $\rho<1$. However, our Theorem~\ref{thmdiffusionSDE}
can be proved the same way, and in a sense is easier, because when
$\rho<1$, the queues vanish in the limit (which is why the queue
length process is not even present in the statement of Theorem
\ref{thmdiffusionSDE}).

\subsection{Evanescence of invariant measures}\label
{sectioninvariantmeasures}

In this section we show that if the matrix $A_u$ has eigenvalues with
positive real part, the stationary distribution of the (diffusion
scaled) process $\hat\Psi^r_\CE(\cdot)$ escapes to infinity as
$r\to\infty$. Namely, we prove the following:
%
%
\begin{theorem}
\label{th-measure-escapes}
Suppose\vspace*{1pt} $\rho<1$.
Consider a sequence of systems as defined in Section
\ref{sec-diffusion-finite},
and denote by $\mu^r$ the stationary distribution of the
process $\hat\Psi^r_\CE(\cdot)$, a probability measure on $\BR^{I+J-1}$.
Let $b_K = \{|z| \leq K\} \subset\BR^{I+J-1}$.
Suppose the matrix $A_u$ has eigenvalues with positive real parts and
no pure imaginary eigenvalues.\footnote{The requirement of ``no pure
imaginary eigenvalues'' is made for convenience of differentiating
between strict convergence and strict divergence. It holds for generic
values of $\beta_j$, $\mu_{ij}$: that is, any set of values $\beta_j$,
$\mu_{ij}$ has a small perturbation $\tilde{\beta}_j$, $\tilde
{\mu}_{ij}$ with for which $A_u$ has no pure imaginary eigenvalues.}
Then for any $K$, $\mu^r(b_K) \to0$ as $r\to\infty$.
\end{theorem}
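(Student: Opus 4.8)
The plan is to argue by contradiction: suppose $\mu^r(b_K)\not\to 0$, so that $\mu^r(b_K)\ge\delta$ for some fixed $\delta>0$ along a subsequence, and derive a contradiction from the transience of the limiting diffusion \eqref{eqn:form of ODE}. First I would establish that this limiting diffusion is genuinely transient. Decompose $\BR^I=E^s\oplus E^u$ into the stable and unstable invariant subspaces of $A_u$; there is no center subspace, since $A_u$ is nonsingular by Lemma~\ref{lemma:entries of A_u}(ii) and, by hypothesis, has no pure imaginary eigenvalues. Writing the explicit (Ornstein--Uhlenbeck) solution of \eqref{eqn:form of ODE} and projecting onto $E^u$, one gets $P_u\hat\Psi_\CI(t)=e^{(A_u|_{E^u})t}\big(P_u\hat\Psi_\CI(0)+\int_0^t e^{-(A_u|_{E^u})s}P_u\,\mathrm{diag}(\sqrt{2\lambda_i})\,dB(s)\big)$; since all eigenvalues of $A_u|_{E^u}$ have positive real part, the stochastic integral converges a.s. as $t\to\infty$ to a Gaussian vector $Z$ whose law has full support, so $P_u\hat\Psi_\CI(0)+Z\ne 0$ a.s. and therefore $|P_u\hat\Psi_\CI(t)|\to\infty$ exponentially fast, a.s., from every deterministic initial condition. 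In particular $|\hat\Psi_\CI(t)|\to\infty$ a.s., hence (using $\hat\Psi_\CE=M\hat\Psi_\CI$ on $\CM$ and injectivity of $M$) $|\hat\Psi_\CE(t)|\to\infty$ a.s.; moreover a quantitative version of this shows the expected total occupation time $m(x):=\BE_x\big[\int_0^\infty \mathbf 1_{\{|\hat\Psi_\CE(t)|\le K\}}\,dt\big]$ satisfies $\bar m:=\sup_x m(x)<\infty$.

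Next I would turn stationarity into a time average. Since the $r$th (unscaled) system is positive recurrent, $\mu^r$ is invariant for $\hat\Psi^r_\CE(\cdot)$, so for every $T>0$,
\[
\mu^r(b_K)=\int \BE_x\Big[\tfrac1T\int_0^T \mathbf 1_{b_K}\big(\hat\Psi^r_\CE(t)\big)\,dt\Big]\,\mu^r(dx)\ \le\ \sup_x \BE_x\Big[\tfrac1T\int_0^T \mathbf 1_{b_K}\big(\hat\Psi^r_\CE(t)\big)\,dt\Big].
\]
Thus it suffices to show $\limsup_{r\to\infty}\sup_x \BE_x\big[\int_0^T \mathbf 1_{b_K}(\hat\Psi^r_\CE(t))\,dt\big]\le \bar m$ (plus a genericity caveat, see below), since then $\limsup_r\mu^r(b_K)\le \bar m/T$ for every $T$, hence $=0$, the desired contradiction.

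The remaining, and main, task is to bound the occupation time uniformly in the initial state, in three regimes. For $x$ in a fixed ball $b_R$, Theorem~\ref{thm:diffusion SDE} (applied to initial conditions $x^r\to x$ on the appropriately scaled lattice, and tracking the instantaneous collapse onto $\CM$), together with bounded convergence and a standard equicontinuity/subsequence argument, gives $\BE_{x}\big[\int_0^T\mathbf 1_{b_K}(\hat\Psi^r_\CE(t))\,dt\big]\to \BE_x\big[\int_0^T\mathbf 1_{b_K}(\hat\Psi_\CE(t))\,dt\big]\le m(x)\le\bar m$, uniformly over $x\in b_R$; here one perturbs $K$ slightly so that $b_K$ is a continuity set of the (Gaussian, for $t>0$) limiting marginals. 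For $R\le |x|\le \epsilon_0\sqrt r$ with $\epsilon_0$ small, the process is still in the regime where the drift is $\approx A_u\cdot(\text{state})$ — this is both the diffusion-scale linearization and, by Theorem~\ref{thm:underload, linear equation}, the fluid-scale linearization — and the unstable component of the state, being already of magnitude $\gtrsim R$, grows immediately, so the process is pushed outward and its occupation of $b_K$ within $[0,T]$ is $O(1)$. For $|x|$ of order $\sqrt r$ (fluid scale, fluid distance $\Theta(1)$ from the equilibrium), one uses that fluid trajectories starting near the equilibrium are repelled along the unstable manifold, together with the a priori confinement $|\hat\Psi^r_\CE|\le c\sqrt r$, to conclude that a far-started process cannot accumulate occupation in $b_K$; since exceptional fluid trajectories could in principle pass repeatedly near the equilibrium, this last step is best carried out by integrating against $\mu^r$ (which, having a density, does not charge the exceptional null set) rather than taking a pointwise supremum.

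I expect the genuine obstacle to be precisely this third regime: Theorem~\ref{thm:diffusion SDE} is a finite-horizon, essentially fixed-initial-condition statement, whereas the stationary measure $\mu^r$ lives on an $O(\sqrt r)$-neighborhood, so transience of the limit must be transferred to a bound that is uniform over the whole support of $\mu^r$, bridging the diffusion scale near the equilibrium with the fluid scale far from it. It is exactly here that the \emph{fluid-model} instability (not merely the instability of the diffusion) has to be invoked, via an auxiliary fluid-level argument showing that the diffusion-scaled process started far out does not return close to the equilibrium and linger there; the state-space-collapse jump onto $\CM$ at time $0$ and the non-continuity of $\mathbf 1_{b_K}$ are comparatively minor technical points, handled by perturbing $K$ and using that the limiting law has a density for $t>0$.
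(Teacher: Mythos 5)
There is a genuine gap, and it sits exactly where you flagged it. Your scheme needs $\limsup_r \sup_x \BE_x\bigl[\int_0^T \one_{b_K}(\hat \Psi^r_\CE(t))\,dt\bigr]$ to be bounded by a constant independent of $T$, with the supremum taken over the \emph{entire} state space of the $r$-th system — including states at distance $\Theta(\sqrt r)$ from the equilibrium and, worse, states (at any distance) whose unstable component vanishes. Your regime-(b) claim that the unstable component is ``already of magnitude $\gtrsim R$'' is false for initial points on or near the stable subspace: such a trajectory is driven \emph{toward} $b_K$, and escape is then produced only by the noise, so the occupation bound again requires a diffusion-scale argument made uniform in $r$ at the prelimit level — the finite-horizon, fixed-initial-condition Theorem~\ref{thm:diffusion SDE} does not deliver this by itself. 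Your fallback for regime (c), integrating against $\mu^r$ ``which, having a density, does not charge the exceptional null set,'' does not work: $\hat \Psi^r_\CE$ lives on a lattice of spacing $1/\sqrt r$, so $\mu^r$ is purely atomic; and, more fundamentally, whether $\mu^r$ puts mass near the stable manifold is precisely the kind of information about $\mu^r$ that the theorem is supposed to establish, so it cannot be assumed. (Your computation that the limiting diffusion has uniformly bounded expected occupation of $b_K$ is fine; the unproved step is transferring it to the prelimit processes uniformly over all initial states.)

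The paper's proof avoids all of this with a lighter argument that never looks at far-away initial conditions. One regards the $\mu^r$ as measures on the one-point compactification of $\BR^{I+J-1}$, extracts a subsequential weak limit $\mu$, and supposes $\mu(\BR^{I+J-1})>0$. Step 1: stationary mass sitting in a fixed ball $b_K$, run for a fixed time $T$, lands — with probability close to $1$, uniformly over initial states in $b_K$, by Theorem~\ref{thm:diffusion SDE} and Lemma~\ref{lem-sde}(i) — near the manifold $\CM$ and at positive distance from the stable set $\CC$, because the time-$T$ law of the limit SDE is Gaussian with nondegenerate covariance and charges no lower-dimensional subspace; by stationarity, $\mu$ restricted to $\BR^{I+J-1}$ must therefore be concentrated on $\CM\setminus\CC$. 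Step 2: mass in $b_K$ at positive distance from $\CC$ is transported by a time $T_K$ into $b_{K'}\setminus b_{2K}$ (the mean escapes exponentially, Lemma~\ref{lem-sde}(ii)), so by stationarity $\mu(\BR^{I+J-1}\setminus b_K)=\mu(\BR^{I+J-1})$ for every $K$ — a contradiction. The issue you would need to resolve (what the process does when started at fluid distances, or on the stable manifold) simply never arises in that argument: only stationary mass already in compact sets is tracked, and the Gaussianity of the time-$T$ marginal disposes of the stable-manifold mass.
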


Before we proceed with the proof, let us introduce more notation and
one auxiliary result.
Let $\CC_\CI$ be the submanifold of convergence (stability)
of ODE $(d/dt)y=A_u y$ on $\BR^{I}$; namely, $\CC_\CI$ is the (real)
subspace of $\BR^{I}$
spanned by the Jordan basis vectors
for matrix $A_u$ corresponding to all eigenvalues with negative real
parts. Given assumptions of
the theorem on $A_u$, the solutions to $(d/dt)y = A_u y$ converge to
$0$ exponentially fast if $y(0)\in\CC_\CI$, and go to infinity
exponentially fast
if $y(0)\in\BR^{I}\setminus\CC_\CI$.
Let $\CC=M \CC_\CI$ denote the corresponding submanifold of
convergence (stability)
of the linear ODE $(d/dt)z = (MA_uM') z$ on $z\in\CM$. This ODE is
just the $M$-image
of ODE $(d/dt)y=A_u y$. Therefore, a solution $z(t)$ converges to $0$
exponentially fast if $z(0)\in\CC$, and goes to infinity
exponentially fast
if $z(0)\in\CM\setminus\CC$. Let us denote
$b_K(\delta_1,\delta_2):= b_K \cap\{d(z,\CM)\le\delta_1, d(z,\CC
)\ge\delta_2\}$,
where $d(\cdot,\cdot)$ is Euclidean distance.
%
%
\begin{lemma}\label{lem-sde}
Solutions to SDE (\ref{eqnformofODE}) have the following
properties:

\begin{longlist}
\item
For any $T>0$ and any $\Psi_\CI(0)$,
\[
\BP\bigl\{M \hat\Psi_\CI(T) \in\CM\setminus\CC\bigr\}=1;
\]

\item For any $K>0$, $\delta_2>0$ and $\varepsilon>0$,
there exist sufficiently large $T_K$ and $K'>K$, such that,
uniformly on $M \hat\Psi_\CI(0) \in b_K(0,\delta_2)$,
\[
\BP\bigl\{M \hat\Psi_\CI(T_K) \in b_{K'}
\setminus b_{2K}\bigr\} \ge1-\varepsilon.
\]
\end{longlist}
\end{lemma}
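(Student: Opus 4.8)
The plan is to exploit the fact that the SDE \eqn{eqn:form of ODE} is a linear (Ornstein--Uhlenbeck type) system driven by non-degenerate noise in the $\CM$-directions, so its transition law at any positive time has a density supported on all of $\CM$. For part (i), I would argue as follows. Write $\hat\Psi_\CI(t) = e^{A_u t}\hat\Psi_\CI(0) + \int_0^t e^{A_u(t-s)}\,d\Gamma(s)$, where $\Gamma(s) = (\sqrt{2\lambda_i}B_i(s))$; the stochastic integral term is a Gaussian random vector whose covariance matrix $\int_0^T e^{A_u s}\,D\,e^{A_u^\dagger s}\,ds$ (with $D = \mathrm{diag}(2\lambda_i)$) is strictly positive definite for every $T>0$, since $D$ is. Hence $\hat\Psi_\CI(T)$ has a Gaussian law on $\BR^I$ with full-rank covariance, so it is absolutely continuous with respect to Lebesgue measure; consequently $\BP\{\hat\Psi_\CI(T)\in \CC_\CI\}=0$ because $\CC_\CI$ is a proper linear subspace (it is proper precisely because $A_u$ has at least one eigenvalue with positive real part, by hypothesis). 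Since $M$ is a linear injection from $\BR^I$ onto $\CM$ carrying $\CC_\CI$ to $\CC$, it follows that $\BP\{M\hat\Psi_\CI(T)\in\CM\setminus\CC\}=1$, which is (i).

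For part (ii) I would combine a compactness/continuity argument with the divergence of the deterministic flow off $\CC$. Fix $K$, $\delta_2$, $\epsilon$. On the starting set $b_K(0,\delta_2)$ — i.e. $M\hat\Psi_\CI(0)\in\CM$, $|M\hat\Psi_\CI(0)|\le K$, $d(M\hat\Psi_\CI(0),\CC)\ge\delta_2$ — the deterministic solution $z(t)=e^{(MA_uM')t}z(0)$ of $(d/dt)z=(MA_uM')z$ starting at $z(0)=M\hat\Psi_\CI(0)$ has the component of $z(0)$ transverse to $\CC$ (of norm at least a constant multiple of $\delta_2$) growing exponentially; hence there is a time $T_K$, depending only on $K$, $\delta_2$ (and the system parameters), such that $|z(T_K)|\ge 3K$ for every such $z(0)$ — here I use that the starting set is compact and the map $z(0)\mapsto |z(T_K)|$ is continuous and bounded below on it, so a single $T_K$ works uniformly. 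Now compare the SDE solution with this deterministic trajectory: $M\hat\Psi_\CI(T_K) - z(T_K) = M\int_0^{T_K}e^{A_u(T_K-s)}\,d\Gamma(s)$, a mean-zero Gaussian vector whose covariance depends only on $T_K$ and the parameters, hence is a fixed, tight random variable. By Chebyshev (or Gaussian concentration), there is $R=R(T_K,\epsilon)$ with $\BP\{|M\hat\Psi_\CI(T_K)-z(T_K)|\le R\}\ge 1-\epsilon$, uniformly over the starting set since the noise covariance does not depend on the initial condition. On this event, $|M\hat\Psi_\CI(T_K)|\ge |z(T_K)|-R\ge 3K-R$ and $|M\hat\Psi_\CI(T_K)|\le |z(T_K)| + R$; setting $K' := \sup_{z(0)}|z(T_K)| + R$ (finite by compactness) and, if necessary, enlarging $T_K$ so that $3K - R \ge 2K$, we obtain $\BP\{M\hat\Psi_\CI(T_K)\in b_{K'}\setminus b_{2K}\}\ge 1-\epsilon$, which is (ii).

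The main obstacle I anticipate is making the uniformity in part (ii) genuinely clean: one must check that both the exit time $T_K$ and the deviation bound $R$ can be chosen independently of the starting point within $b_K(0,\delta_2)$. The exit-time uniformity rests on the continuity of the linear flow together with compactness of the (closed, bounded) starting set, which is routine; the deviation-bound uniformity is actually automatic because the stochastic integral term in a linear SDE with additive noise does not depend on the initial condition at all. A secondary point to handle with a little care is that $b_K(0,\delta_2)$ constrains $z(0)$ to lie exactly on $\CM$ (the ``$d(z,\CM)\le 0$'' reading of $\delta_1=0$), which is consistent with the SDE keeping the process on $\CM$ for all $t\ge 0$; so there is no issue reconciling the lemma's geometry with the dynamics. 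I would also remark that the hypothesis ``no pure imaginary eigenvalues'' is what guarantees the dichotomy (exponential decay on $\CC_\CI$, exponential growth off it) that makes the $|z(T_K)|\ge 3K$ estimate possible.
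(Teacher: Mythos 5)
Your part (i) is fine and is exactly the paper's argument: for any deterministic initial condition the time-$T$ law of $\hat\Psi_\CI$ is Gaussian with non-singular covariance (non-degeneracy of $D=\mathrm{diag}(2\lambda_i)$), hence it assigns zero mass to the proper subspace $\CC_\CI$, and applying the injection $M$ gives the claim.

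Part (ii) has a genuine gap at the concluding step ``if necessary, enlarging $T_K$ so that $3K - R \ge 2K$.'' The radius $R=R(T_K,\epsilon)$ is controlled by the covariance $\int_0^{T_K} e^{A_u s} D e^{A_u^{\dagger} s}\,ds$ of the stochastic convolution, and since $A_u$ has eigenvalues with positive real part this covariance diverges (exponentially, at a rate set by the \emph{largest} unstable real part) as $T_K$ grows; so enlarging $T_K$ makes $R$ larger, not smaller, and the needed inequality $R\le K$ is circular. It can also be unattainable outright: to drive every deterministic trajectory started in $b_K(0,\delta_2)$ out to norm $3K$ you need $T_K$ of order $a^{-1}\log\bigl(K/(c\,\delta_2)\bigr)$, where $a$ is only the \emph{smallest} positive real part, so for small $\delta_2$ the noise accumulated by time $T_K$ is of order $K/\delta_2$ or larger, and your event $\{|M\hat\Psi_\CI(T_K)-z(T_K)|\le K\}$ need not have probability close to $1$. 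The point is that you are proving an intermediate claim (deterministic displacement exceeds the noise radius by $K$) that is stronger than what is needed and false in general. What suffices, and what the paper uses, is an anticoncentration observation: if the mean of a Gaussian vector has large norm, then the mass it assigns to any \emph{fixed} bounded set is small \emph{uniformly in the covariance} (project onto the direction of the mean: if the variance in that direction is large, the density is uniformly small on an interval of fixed length; if it is small, the interval lies far in the tail). Since the mean at time $T_K$ has norm at least $a_1 e^{aT_K}$, uniformly over the compact starting set $b_K(0,\delta_2)$, choosing $T_K$ large makes the mass of $b_{2K}$ at most $\epsilon/2$ regardless of how the covariance has grown; then, for that fixed $T_K$, your choice of $K'$ (using boundedness of mean and covariance over the starting set) captures mass $1-\epsilon/2$, exactly as in the paper. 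With that replacement your argument closes and coincides with the paper's proof.
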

\begin{pf}
Statement (i) follows from the fact that, regardless of the (deterministic)
initial state
$\Psi_\CI(0)$, the solution to SDE (\ref{eqnformofODE}) is such that
the distribution of $\Psi_\CI(T)$ is Gaussian with nonsingular covariance
matrix. (See~\cite{KaratzasShreve}, Section 5.6. In our case the matrix
of diffusion coefficients is diagonal with entries $\sqrt{2\lambda_i}$.)
Therefore, the probability that $\Psi_\CI(T)$ is in a subspace of
lower dimension
is zero.

Statement (ii) follows from the fact (again, see
\cite{KaratzasShreve}, Section
5.6) that the expectation $m(t) = \BE\hat\Psi_\CI
(t)$ evolves according to ODE
\[
\dot m(t) = A_u m(t).
\]
Since $d(M \hat\Psi_\CI(0), \CC)\ge\delta_2$ [and thus $\hat\Psi
_\CI
(0)$ is also separated by a positive distance from $\CC_\CI$],
we have
\[
\bigl\llvert m(t)\bigr\rrvert\geq a_1 \exp(at) 
\]
for some fixed $a_1,a>0$ and all large $t$. [Here $a_1$ depends on the
minimum length of the projection of $\hat\Psi_\CI(0)$ along $\CC
_\CI$
onto the (real) span of the Jordan basis vectors of $A_u$ corresponding
to eigenvalues with positive real part, and $a$ is the smallest
positive real part of an eigenvalue of $A_u$.] It is easy to check that
if the mean of a Gaussian distribution goes to infinity, then
(regardless of how the covariance matrix changes) the measure of any
bounded set goes to zero. On the other hand, both $m(t)$ and the
covariance matrix remain bounded for all $t\in[0,T_K]$, with any
$T_K$; then, for any $T_K$, we can always choose $K'$ large enough so
that $\BP\{M \hat\Psi_\CI(T_K) \in b_{K'}\}$ is arbitrarily close to
$1$.
\end{pf}
%
%
\begin{pf*}{Proof of Theorem~\ref{th-measure-escapes}}
We will consider measures $\mu^r$ as measures on the one-point
compactification $\overline{\mathbb{R}}{}^n=\mathbb{R}^n \cup\{*\}$
of the space $\mathbb{R}^n$, where $n=I+J-1$.
In this space, any subsequence of $\{\mu^r\}$ has a further subsequence,
along which $\mu^r \weakto\mu$ for some probability measure
$\mu$ on $\overline{\mathbb{R}}{}^n$. We will show that the entire measure
$\mu$ is concentrated on the infinity point $*$, that is, $\mu
(\mathbb{R}^n)=0$.
Suppose not, that is, $\mu(\mathbb{R}^n)>0$. The proof proceeds in
two steps.

\textit{Step} 1. We prove that $\mu(\mathbb{R}^n)=\mu(\CM\setminus
\CC)$.
Indeed, let us choose any $\varepsilon>0$, and $K$ large enough so that
$\mu(b_{K/2}) > (1-\varepsilon)\mu(\mathbb{R}^n)$. Then, for all
large $r$,
$\mu^r(b_{K}) > (1-\varepsilon)\mu(\mathbb{R}^n)$.
Choose $\delta_1>0$ and $T>0$ arbitrary.
From the properties of the limiting diffusion process (Lemma~\ref{lem-sde}),
we see that we can choose
a sufficiently small $\delta_2>0$ and sufficiently
large $K'$ such that, uniformly on the initial states
$\hat\Psi^r_\CE(0)\in b_K$,
\[
\liminf_{r\to\infty} \BP\bigl\{\hat\Psi^r_\CE(T) \in
b_{K'}(\delta_1,\delta_2)\bigr\} > 1-
\varepsilon.
\]
This implies that for all large $r$,
\[
\mu^r\bigl(b_{K'}(\delta_1,
\delta_2)\bigr) > (1-\varepsilon)^2 \mu\bigl(
\mathbb{R}^n\bigr),
\]
and then $\mu(b_{K'}(\delta_1,\delta_2)) \ge(1-\varepsilon)^2 \mu
(\mathbb{R}^n)$.
Since $\varepsilon$ and $\delta_1$ were arbitrary,
we conclude that $\mu(\mathbb{R}^n)\le\mu(\CM\setminus\CC)$,
and then, obviously, the equality must hold.

\textit{Step} 2. We show that, for any $K>0$,
$\mu(\mathbb{R}^n\setminus b_K)=\mu(\mathbb{R}^n)$. [This is, of
course, impossible
when $\mu(\mathbb{R}^n)>0$, and thus we obtain a contradiction.]
It suffices to show that for any $\varepsilon>0$, we can choose a sufficiently
large $K$, such that $\mu(\mathbb{R}^n\setminus b_K)\ge
(1-\varepsilon)^2 \mu(\mathbb{R}^n)$.
Let us choose (using step 1) a large $K$ and a small $\delta_2>0$,
such that $\mu(b_{K/2}(\delta_1/2,2\delta_2))> (1-\varepsilon)\mu
(\mathbb{R}^n)$
for any $\delta_1>0$.
Then, for any fixed $\delta_1>0$, for all large $r$,
$\mu^r(b_{K}(\delta_1,\delta_2))> (1-\varepsilon)\mu(\mathbb{R}^n)$.
Now, using Lemma~\ref{lem-sde}(ii), we can choose $K'$ and $T_K$ sufficiently
large, and then $\delta_1$ sufficiently small, so that, uniformly on
the initial states
$\hat\Psi^r_\CE(0)\in b_{K}(\delta_1,\delta_2)$,
\[
\liminf_{r\to\infty} \BP\bigl\{\hat\Psi^r_\CE(T_K)
\in b_{K'}\setminus b_{2K}\bigr\}\ge1-\varepsilon.
\]
Therefore,
\[
\mu^r(b_{K'}\setminus b_{2K}) > (1-
\varepsilon)^2 \mu\bigl(\mathbb{R}^n\bigr)
\]
for all large $r$, and then for the limiting measure $\mu$
we must have $\mu(\mathbb{R}^n\setminus b_K)\ge
(1-\varepsilon)^2 \mu(\mathbb{R}^n)$.
\end{pf*}

\section{Diffusion scaled process in a critically loaded system in
Halfin--Whitt asymptotic regime}
\label{sec-hw}

In this section we consider the following asymptotic regime.
The system is critically loaded, that is,
the optimal solution to SPP (\ref{eqnStaticLP})
is such that $\rho= 1$.
As scaling parameter
$r\to\infty$, assume that the server pool sizes are $r \beta_j$
(same as throughout the paper), and the input rates are
$\lambda^r_i = r \lambda_i + \sqrt{r} l_i$, where the parameters
(finite real numbers) $\{l_i\}$ are such that
$\sum l_i \nu_i = -C < 0$. Denote by
$\rho^r, \{\lambda^r_{ij}\}$ the optimal solution
of SPP (\ref{eqnStaticLP}), with $\beta_j$'s and $\lambda_i$'s
replaced by $r\beta_j$ and $\lambda_i^r$, respectively.
(This solution is unique, as can be easily seen from the CRP condition.)
Then, it is easy to check that $\rho^r = 1 + (\sum l_i \nu_i) / \sqrt{r}
= 1 - C/ \sqrt{r}$, which in turn easily implies that, for any $r$,
the system process is stable with the unique stationary
distribution.\looseness=1

We use the definitions of (\ref{eqndiffusionscale}) for the
diffusion scaled variables,
and add to them the following ones: $\hat X_i^r(t) = (X_i^r(t)-\psi_i^*
r)/\sqrt{r}$
for the (diffusion-scaled) number of type $i$ customers;
$\hat Q^r_i(t) = Q^r_i(t) / \sqrt{r}$ for the type $i$ queue length;
$\hat Z^r_j(t) = Z^r_j(t) / \sqrt{r}$, where $Z^r_j(t) = \Psi^r_j(t)
- r \beta_j \le0$
is the number\vspace*{2pt} of idle servers of type $j$ (with the minus sign).
Note that, although the optimal average
occupancy of pool $j$ is at $\rho^r r \beta_j$, the quantity $\hat
Z^r_j(t)$ measures the deviation from full occupancy $r \beta_j$.
Our choice of signs is such that $\hat Q^r_i \geq0$ while $\hat Z^r_j
\leq0$.
We will use the vector notations, such as $\hat X_{\CI}^r(t)$, as usual.

Two main results of this section are as follows:
(a) it is possible for the invariant distributions to escape to
infinity under certain
system parameters and (b) in the special case when service rate depends
on the server type only,
the invariant distributions are tight.

\subsection{Example of evanescence of invariant measures}
\label{sec-evanescence-in-HW}

Recall that $\pi$ denotes the (matrix of) orthogonal\vspace*{1pt} projection on the subspace
$L = \{y\in\BR^I |\break\sum_i y_i = 0\}$ in $\BR^I$; this is the
projection ``along'' the direction of vector $(1,\ldots,1)^{\dagger}$.
Also recall the relation between matrices $A_u$ and $A_c$,
\[
A_c=\pi A_u.
\]
One more notation: for $y\in\BR^I$,
\[
F[y]= \cases{ \pi y, &\quad if $\displaystyle\sum_i
y_i > 0$,
\vspace*{2pt}\cr
y, &\quad if $\displaystyle\sum_i
y_i\le0$.}
\]

Analogously to Theorem~\ref{thmdiffusionSDE},
the following fact is a corollary (this time---direct) of
Theorems 3.1 and 4.4 in~\cite{GurvichWhitt}.
%
%
\begin{theorem}
\label{thmdiffusionSDE-hw}
Assume that as $r\to\infty$, $\hat X^r_\CI(0)\to\hat X_\CI(0)$ and
$\hat\Psi^r_\CE(0)\to\hat\Psi_\CE(0)$,
where $\hat X_\CI(0)$ and $\hat\Psi_\CE(0)$ are deterministic and
finite. Then,
%
%
\begin{equation}
\label{eq-psi-i-conv-hw} \hat X^r_\CI(\cdot) \implies\hat
X_\CI(\cdot) \qquad\mbox{in $D^{I}[0,\infty)$}
\end{equation}
and for any fixed $\eta>0$,
%
%
\begin{equation}
\label{eq-psi-ij-conv-hw} \hat\Psi^r_\CE(\cdot) \implies
M F\bigl[\hat X_\CI(\cdot)\bigr] \qquad\mbox{in $D^{I+J-1}[\eta,
\infty)$},
\end{equation}
where
$\hat X_\CI(\cdot)$ is the unique solution of the SDE
%
%
\begin{equation}
\label{eq-psi-limit-hw} \hat X_\CI(t) = \hat X_\CI(0) +
\int_0^t A_u F\bigl[\hat
X_\CI(s)\bigr] \,ds + \bigl(\sqrt{2\lambda_i}
B_i(t)\bigr),
\end{equation}
and the processes $B_i(\cdot)$ are independent standard Brownian motions.
\end{theorem}

Next we establish the following fact.
%
%
\begin{lemma}
\label{lem-777}
There exists a system and a parameter setting such that the following
hold.

\begin{longlist}
\item
Matrix $A_c$ is unstable;

\item Matrix $A_u$ has $(1,\ldots,1)^{\dagger}$ as a right
eigenvector, with
real nonzero eigenvalue $c$,
%
%
\begin{equation}
\label{eq-key777} A_u (1,\ldots,1)^{\dagger} = c (1,\ldots
,1)^{\dagger}.
\end{equation}
\end{longlist}
\end{lemma}
\begin{pf}
Let us start with the system in the
local instability example~2
(see Figure~\ref{figcriticalloadcounterexample}) for the critical load.
We will modify it as follows. We will change $\mu_{D3}$ from $100$ to
$100-\varepsilon$
with sufficiently small positive $\varepsilon$, so that $A_c$ remains unstable.
(The reason for this change will be explained shortly.)
We will add two new server pools, 0 and 5, on the left and on the
right, respectively,
and set $\mu_{A0}=100$, \mbox{$\mu_{E5}=1$}; such addition of server-leaves
does not change
the instability of $A_c$. So, (i) holds.

Now, suppose all $\lambda_i$ are equal,
say $\lambda_i=1$. We can choose $\psi_{ij}^*$ such that
all $\psi_{i}^*= \sum_j \psi_{ij}^*$ are equal, and $\sum_j \mu_{ij}
\psi_{ij}^*=\lambda_i = 1$
for all $i$. Namely, we do the following. The reason for changing
$\mu_{D3}$ from $100$ to $100-\varepsilon$ is to make it possible to choose
$\psi_{D3}^*>0$ and $\psi_{D4}^*>0$, such that $\sum_j \mu_{Dj}
\psi_{Dj}^* = 1$
and $\psi_{D}^*= \psi_{D3}^*+ \psi_{D4}^* > 1/100$. We choose $\psi
_{A0}^*=1/100-\delta$,
$\psi_{A1}^*=100 \delta$ (which guarantees $\sum_j \mu_{Aj} \psi_{Aj}^*
= 1$) with $\delta>0$ small enough so that $\psi_{A}^* =
1/100 + 99\delta< 1/(100-\varepsilon)$.
The values of pairs $(\psi_{B1}^*,\psi_{B2}^*)$, $(\psi_{C2}^*,\psi
_{C3}^*)$,
$(\psi_{E4}^*,\psi_{E5}^*)$, are chosen to be equal to $(\psi
_{A0}^*,\psi_{A1}^*)$.
Finally, we choose $\psi_{D3}^*=(1-\delta_1)/(100-\varepsilon)$ and
$\psi_{D4}^*= \delta_1/10^4$ (which ensures $\sum_j \mu_{Dj} \psi
_{Dj}^* = 1$) with
$\delta_1>0$ satisfying
\[
\psi_{D}^*=(1-\delta_1)/(100-\varepsilon)+
\delta_1/10^4 = 1/100 + 99\delta=\psi_{A}^*.
\]
This completes the choice of $\psi_{ij}^*$.

We set $\beta_j = \sum_i \psi_{ij}^*$. We see that $(\psi_{ij}^*)$
is the equilibrium point. It follows from the construction that
(\ref{eq-key777}) will hold for $A_u$.
Indeed, if $\psi_{\CI}-\psi_{\CI}^*= c_1 (1,\ldots,1)^{\dagger}$,
then $\psi_{\CI}= c_2 \psi_{\CI}^*$, which in turn means that
$\psi_{\CE}= c_2 \psi_{\CE}^*$; therefore, the corresponding
service rates are $\sum_j \mu_{ij} \psi_{ij} = c_2 \sum_j \mu_{ij}
\psi_{ij}^*=
c_2 \lambda_i = c_2$ for all $i$; therefore, $\dot{\psi}_{\CI} =
(1-c_2) (1,\ldots,1)^{\dagger}$.
\end{pf}
%
%
\begin{theorem}
\label{th-888}
Suppose we have a system with parameters satisfying Lem\-ma~\ref{lem-777},
in the Halfin--Whitt regime,\vspace*{1pt} described in this section.
Then, the sequence of stationary distributions
of $\hat X_\CI^r$ (and of $\hat\Psi_\CI^r$) escapes to infinity:
the measure of any compact set vanishes.
\end{theorem}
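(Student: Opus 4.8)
The plan is to reduce the claim to the instability of a \emph{linear}, Ornstein--Uhlenbeck-type diffusion living on the subspace $L=\{y\in\BR^I~|~\sum_i y_i=0\}$, and then to replay, essentially verbatim, the compactification argument from the proof of Theorem~\ref{th-measure-escapes}.

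\textbf{Step 1 (reduction to a linear SDE).} I would decompose the limiting process of Theorem~\ref{thm:diffusion SDE-hw} as $\hat X_\CI=Y+\frac{1}{I}\bigl(\sum_i\hat X_i\bigr)(1,\dots,1)^{\dagger}$, where $Y:=\pi\hat X_\CI\in L$. The crucial observation is that $Y$ satisfies a \emph{closed} linear SDE, regardless of which branch of $F[\cdot]$ is active. Applying $\pi$ to \eqref{eq-psi-limit-hw}: on $\{\sum_i\hat X_i>0\}$ the drift of $\hat X_\CI$ is $A_uF[\hat X_\CI]=A_u\pi\hat X_\CI=A_uY$, whose $\pi$-image equals $\pi A_uY=A_cY$ since $A_c=\pi A_u$; on $\{\sum_i\hat X_i\le0\}$ the drift is $A_u\hat X_\CI=A_uY+\frac{1}{I}\bigl(\sum_i\hat X_i\bigr)A_u(1,\dots,1)^{\dagger}=A_uY+\frac{c}{I}\bigl(\sum_i\hat X_i\bigr)(1,\dots,1)^{\dagger}$ by Lemma~\ref{lem-777}(ii), and applying $\pi$ annihilates the second term because $\pi(1,\dots,1)^{\dagger}=0$, leaving $A_cY$ once more. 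Hence
\[
dY(t)=A_c\,Y(t)\,dt+\pi\bigl(\sqrt{2\lambda_i}\,dB_i(t)\bigr),
\]
a genuine linear SDE on $L$ (recall $A_cL\subseteq L$ by Lemma~\ref{lemma:entries of A_c}(ii)), with drift matrix $A_c|_L$ and diffusion matrix $\pi\,\mathrm{diag}(2\lambda_i)\,\pi$, the latter positive definite on $L$. A constant drift $\pi(l_i)\,dt$ inherited from the Halfin--Whitt input perturbation, if present, would be harmless. This is exactly where Lemma~\ref{lem-777}(ii) is needed: it decouples $Y$ from the nonlinear dynamics of $\sum_i\hat X_i$.

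\textbf{Step 2 (divergence of $Y$).} By the construction in Lemma~\ref{lem-777}, $A_c$ is unstable, so $A_c|_L$ has an eigenvalue with strictly positive real part; perturbing the parameters slightly if necessary we may also assume $A_c|_L$ has no pure imaginary eigenvalue (cf.\ the footnote to Theorem~\ref{th-measure-escapes}), so that $L$ splits into the stable subspace $\CC_L$ and a complementary unstable subspace of $A_c|_L$. Then $Y$ is an explicit Gaussian process: its mean $m(t)=\BE Y(t)$ solves $\dot m=A_c m$ (plus the constant term, if any), which diverges exponentially fast unless $m(0)\in\CC_L$; and, as in Lemma~\ref{lem-sde}, for every $t>0$ the law of $Y(t)$ is a non-degenerate Gaussian on $L$, whence $\BP\{Y(t)\in\CC_L\}=0$. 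Consequently the two assertions of Lemma~\ref{lem-sde} carry over with $\CM,\CC$ replaced by $L,\CC_L$: for any $T>0$, $\BP\{Y(T)\in L\setminus\CC_L\}=1$; and for any $K,\delta_2,\epsilon>0$ there are $T_K$ and $K'>K$ with $\BP\{Y(T_K)\in b_{K'}\setminus b_{2K}\}\ge1-\epsilon$, uniformly over $Y(0)\in b_K$ with $d(Y(0),\CC_L)\ge\delta_2$.

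\textbf{Step 3 (transfer to stationary distributions).} For each $r$ the hypothesis $\sum_i l_i\nu_i=-C<0$ gives $\rho^r=1-C/\sqrt r<1$, hence positive recurrence and a unique stationary distribution; I would let $\nu^r$ be the law it induces on $Y^r:=\pi\hat X^r_\CI\in L$. Regarding $\{\nu^r\}$ as measures on the one-point compactification $\ov L$, extract a subsequential weak limit $\nu^r\weakto\nu$, and suppose for contradiction $\nu(L)>0$. I would then run the two-step argument of the proof of Theorem~\ref{th-measure-escapes} with $\CM$ replaced by $L$ (no ``$d(\cdot,L)\le\delta_1$'' slack is needed now, since $Y^r\in L$ exactly for every $r$), $\CC$ replaced by $\CC_L$, and the process $\hat\Psi^r_\CE(\cdot)$ replaced by $Y^r(\cdot)$: by Theorem~\ref{thm:diffusion SDE-hw} (restricting to the event that the initial state lies in $b_K$, conditioning, and extracting a convergent subsequence of the thus-tightened initial laws) one obtains $Y^r(\cdot)\implies Y(\cdot)$ over any finite horizon, and the two facts of Step~2 together with stationarity of $\nu^r$ propagate the mass estimates forward in time. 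Step~1 of that argument forces $\nu$ to be concentrated on $L\setminus\CC_L$, and Step~2 then forces $\nu(L\setminus b_K)=\nu(L)$ for every $K$ --- impossible when $\nu(L)>0$. Hence $\nu(L)=0$: the stationary law of $Y^r$ escapes to infinity, and since $|\hat X^r_\CI|\ge|Y^r|$, so does that of $\hat X^r_\CI$; the corresponding statement for $\hat\Psi^r_\CI$ follows the same way applied to $\pi\hat\Psi^r_\CI=M'\pi$-image of $\hat\Psi^r_\CE$, which differs from $Y^r$ only by the (asymptotically vanishing) queue-imbalance term $\pi\hat Q^r_\CI$ and therefore escapes as well.

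\textbf{Main obstacle.} The only genuinely new ingredient is Step~1 --- short, but the place where Lemma~\ref{lem-777}(ii) is essential, since it is what lets the $F[\cdot]$-nonlinearity drop out of the equation for the projected process $Y$. Everything after that is a replay of the proof of Theorem~\ref{th-measure-escapes}; within it, the mildly delicate point is the finite-horizon convergence $Y^r(\cdot)\implies Y(\cdot)$ \emph{uniformly over initial conditions in a fixed ball}, needed to push stationary mass forward in time, which is handled by the standard conditioning-and-tightening device, together with the (already routine) check that the queue-imbalance contribution to $\hat\Psi^r_\CI$ is negligible on the relevant scale.
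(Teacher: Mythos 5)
Your proposal is correct and follows essentially the same route as the paper: use Lemma~\ref{lem-777}(ii) to show $\pi A_u F[y]=A_c\pi y$, so that $\pi\hat X_\CI$ satisfies the linear SDE with drift matrix $A_c$ (the paper's \eqref{eq-SDE-888}), and then repeat the compactification argument of Section~\ref{section:invariant measures}. The extra details you supply (the case split in Step 1, the analog of Lemma~\ref{lem-sde} on $L$, and the handling of the $\hat\Psi^r_\CI$ claim via the finite-horizon state-space collapse) are exactly the steps the paper leaves implicit in ``repeat the argument''.
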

\begin{pf}
Since $(1,\ldots,1)^{\dagger}$ is an eigenvector of $A_u$, for any
$y\in\BR^I$ we have
\[
\pi A_u F[y] = \pi A_u \pi y = A_c \pi y.
\]
Then, taking the $\pi$-projection of equation (\ref{eq-psi-limit-hw}),
we see that
$\pi\hat X_\CI$ satisfies the following \textit{linear} SDE
%
%
\begin{equation}
\label{eq-SDE-888} \pi\hat X_\CI(t) = \pi\hat X_\CI(0) +
\int_0^t A_c \pi\hat
X_\CI(s) \,ds + \pi\bigl(\sqrt{2\lambda_i}
B_i(t)\bigr).
\end{equation}
Given instability of linear equation
(\ref{eq-SDE-888}), we can repeat the argument
of Section~\ref{sectioninvariantmeasures} to show that the sequence of
projections of the
stationary distributions of $\hat X_\CI^r$ on $L$ escapes to infinity.
\end{pf}

\subsection{Tightness of stationary distributions
in the case when service rate depends on the server type only}
\label{sectiontightness}

In this section we consider a special case when there exists
a set of positive rates $\{\mu_j\}$, such that $\mu_{ij}=\mu_j$
as long as $(ij)\in\CE$.
We demonstrate tightness of invariant
distributions.
(An analogous result holds for the underload system, $\rho<1$, as
sketched out at the end of this section.)
This, in combination with the transient diffusion limit results,
allows us to claim that the limit of invariant distributions
is the invariant distribution of the limiting diffusion process.
%
%
\begin{theorem}
\label{th-tightness-spec-case}
Suppose $\mu_{ij}=\mu_j, (ij)\in\CE$
and $\rho=1$.
Consider a system under the LQFS-LB rule
in the asymptotic regime defined above in this section. Then, for any real
\[
\theta< \theta_0:= \frac{2 \min_i \lambda_i}{\sum_i \lambda_i +
(\max_j \mu_j)\sum_j \beta_j},
\]
the stationary distributions are such that
\[
\lim\sup_r \BE\biggl[\sum_i \exp
\bigl(\theta\hat Q^r_i\bigr) + \sum
_j \beta_j \exp\bigl(\theta\hat
Z^r_j/\beta_j\bigr)\biggr] < \infty.
\]
\end{theorem}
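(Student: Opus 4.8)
The plan is a Foster--Lyapunov (drift) argument carried out directly on the (positive recurrent, as noted) Markov process of the $r$-th system under LQFS-LB, with $V^r := \sum_i \exp(\theta\hat Q^r_i) + \sum_j\beta_j\exp(\theta\hat Z^r_j/\beta_j)$ itself as the Lyapunov function. It suffices to produce constants $\gamma>0$ and $C_0<\infty$, independent of $r$ for all large $r$, such that the generator $\mathcal{G}^r$ obeys $\mathcal{G}^r V^r \le -\gamma V^r + C_0$ at every state; then $\BE_{\pi^r}[V^r]\le C_0/\gamma$ follows by the standard Foster--Lyapunov argument (Dynkin's formula applied to a truncation of $V^r$, then Fatou; truncation is needed since for $\theta>0$ the queue directions make $V^r$ unbounded). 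Since the stated bound is claimed for all real $\theta<\theta_0$ — in particular for some $\theta\in(0,\theta_0)$ and for some $\theta<0$ — it yields uniform-in-$r$ exponential moment bounds for all $\hat Q^r_i$ and all $\hat Z^r_j$; combined with the transient diffusion limit of Theorem~\ref{thm:diffusion SDE-hw}, this gives tightness and the convergence of stationary distributions to the stationary law of \eqref{eq-psi-limit-hw}.

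First I would write $\mathcal{G}^r V^r$ as a sum over the four transition types: (i) an arrival of type $i$ joining queue $i$, multiplying $e^{\theta\hat Q^r_i}$ by $e^{h}$ with $h:=\theta/\sqrt r$; (ii) an arrival of type $i$ routed to the least-loaded available pool $j$, multiplying $\beta_j e^{\theta\hat Z^r_j/\beta_j}$ by $e^{h_j}$ with $h_j:=\theta/(\sqrt r\beta_j)$; (iii) a completion in pool $j$ serving the longest queue $i\in\CC(j)$, multiplying $e^{\theta\hat Q^r_i}$ by $e^{-h}$; and (iv) a completion in pool $j$ leaving a server idle, the reverse move on $\beta_j e^{\theta\hat Z^r_j/\beta_j}$. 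The hypothesis $\mu_{ij}=\mu_j$ enters immediately: the total completion rate of pool $j$ is $\mu_j\Psi^r_j$ regardless of the service mix, so the completion side of $\mathcal{G}^r$ decouples cleanly. Taylor expanding $e^{\pm h}-1=\pm h+\tfrac{h^2}{2}+O(h^3)$ (and similarly in $h_j$), and using that jump sizes are $O(1/\sqrt r)$ while rates are $O(r)$, the contribution splits into a first-order part of order $h\cdot O(r)=O(\sqrt r)$, a nonnegative second-order part of order $h^2\cdot O(r)=O(1)$, and an error $O(h^3\cdot r)=O(r^{-1/2})\to0$ that is discarded.

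The heart of the proof is the first-order part, and the crucial inputs are: (a) \emph{work conservation} — under LQFS-LB, $Q^r_i>0$ forces $\Psi^r_j=r\beta_j$ for every $j\in\CS(i)$, and symmetrically a pool with idle servers has all its queues empty; hence each factor $e^{\theta\hat Q^r_i}$ \emph{gained} on a transition (i) is ``faced'' by the same factor \emph{lost} on completions (iii) of those (full) pools serving queue $i$, and likewise for the idle-server terms; (b) \emph{flow balance and CRP} — the arrival/completion accounting gives $\sum_i(\text{queue-up rates})+\sum_j(\text{route rates})=\sum_i\lambda^r_i$ and $\sum_i(\text{queue-down rates})+\sum_j(\text{idle rates})=\sum_j\mu_j\Psi^r_j$, while summing the binding SPP constraints $\sum_{i\in\CC(j)}\lambda_{ij}=\mu_j\beta_j$ (binding since $\rho=1$) gives $\sum_j\mu_j\beta_j=\sum_i\lambda_i$. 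Pairing gains against losses for each $e^{\theta\hat Q^r_i}$ and each $\beta_j e^{\theta\hat Z^r_j/\beta_j}$, the leading $O(r)$ pieces telescope against $\sum_j\mu_j\beta_j-\sum_i\lambda_i=0$; what survives is the $O(\sqrt r)$ discrepancy $\sum_i\lambda^r_i-r\sum_i\lambda_i=\sqrt r\sum_i l_i$ (the Halfin--Whitt sub-criticality), together with state-dependent ``imbalance'' terms that are strictly negative unless all queues and all loads are equal — precisely because under LQFS-LB the longest queue (resp. the most-idle pool) receives strictly more than its nominal service (resp. arrival) rate. Using the workload inequality implied by $\rho^r=1-C/\sqrt r$ (via the dual weights $\nu_i$, $\sum_i l_i\nu_i=-C<0$), I would conclude that this residual, multiplied by $h$, is at most $-\theta(\min_i\lambda_i)(V^r-C_1)+O(1)$, the restoring coefficient being, in the worst case over which coordinate of the state is large, exactly $\min_i\lambda_i$.

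Finally I would collect the second-order part, which, using $\Psi^r_j\le r\beta_j$ and the $\beta_j$-prefactors in $V^r$ to absorb the $1/\beta_j$ in the idle-server jump sizes, is bounded by $\tfrac{\theta^2}{2}\big(\textstyle\sum_i\lambda_i+(\max_j\mu_j)\sum_j\beta_j\big)V^r+o(1)$; the bracket is exactly the diffusion-coefficient bound (total arrival rate plus a bound on total completion rate, scaled by $1/r$). Adding the two parts gives
\[
\mathcal{G}^r V^r \;\le\; \Big[-\theta\min_i\lambda_i+\tfrac{\theta^2}{2}\big(\textstyle\sum_i\lambda_i+(\max_j\mu_j)\sum_j\beta_j\big)\Big]V^r+C_0+o(1),
\]
and the bracket is negative precisely when $0<\theta<\theta_0$. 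The case $\theta<0$ is symmetric: the idle-server terms are then the unbounded ones, $-\hat Z^r_j$ plays the role of a queue, and transitions (i)/(iii) and (ii)/(iv), and the roles of ``longest queue'' and ``most-idle pool'', swap; the same bookkeeping applies. This establishes $\mathcal{G}^r V^r\le -\gamma V^r+C_0$ uniformly in $r$, and the theorem follows. The main obstacle is step (a)--(b): making the $O(r)$ and $O(\sqrt r)$ cancellations rigorous at \emph{every} state — not only those obeying the state-space-collapse relations of Theorem~\ref{thm:critical load, linear equation} — which requires quantifying the restoring drift toward equal queues and equal loads generated by the LQFS-LB routing and scheduling rules on a tree, and this is exactly where $\mu_{ij}=\mu_j$ is indispensable (both the identity $\sum_j\mu_j\beta_j=\sum_i\lambda_i$ and the decoupling of the completion side rely on it).
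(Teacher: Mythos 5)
Your overall skeleton (the same Lyapunov function, second-order Taylor expansion with $O(1/\sqrt r)$ jumps, truncation to justify integrability, and a stationary-drift argument) matches the paper's, but the step you yourself flag as "the main obstacle" is precisely the content of the paper's proof, and your proposed resolution of it is both unproven and quantitatively wrong. You assert that after pairing gains against losses the first-order part is at most $-\theta(\min_i\lambda_i)(V^r-C_1)+O(1)$ at \emph{every} state. The only source of negative first-order drift for the queue terms is the Halfin--Whitt subcriticality $\rho^r=1-C/\sqrt r$: in the paper's computation the coefficient of $\exp(\theta\hat Q^r_i)$ in the drift is proportional to $C\lambda_i\theta$ (see the drift estimate with the factor $C\lambda_i/\alpha^*$), so no bound with restoring coefficient $\min_i\lambda_i$, independent of $C$, can come out of that accounting. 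Your fallback — that the state-dependent "imbalance" terms are strictly negative unless all queues and loads are equal — is true but cannot be quantified as a $-\gamma V^r$ contribution without exactly the mechanism you have not supplied; indeed the paper never extracts negativity from imbalance at all, and never proves a per-state geometric drift $\mathcal{G}^rV^r\le-\gamma V^r+C_0$: it only bounds the drift by an expression whose stationary expectation yields the moment bounds, treating $\theta>0$ (queue terms) and $\theta<0$ ($\hat Z$ terms, whose restoring drift is \emph{linear} in $\hat Z^r_j$, which is why all negative $\theta$ work — a point your "symmetric" remark glosses over) separately.

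The missing idea is the paper's Steps 2 and 4: a one-step \emph{artificial} scheduling/routing rule that, at the boundary sets $\CI_0$ (empty queues) and $\CJ_0$ (fully occupied pools), routes and schedules proportionally to the nominal rates $\lambda^r_{ij}$, for which the drifts are explicitly computable and the dangerous $O(1/\sqrt r)$ boundary terms cancel exactly between the $\hat Q$-part and the $\hat Z$-part of $\CL$; plus a convexity argument showing that, pathwise within one step, the increment of $\CL$ under LQFS-LB is dominated by that under the artificial rule (with a separate, delicate treatment of routing to the least-loaded pool when the $\beta_j$ are unequal, where the "worst case" $\hat Z^r_k/\beta_k=\hat Z^r_j/\beta_j$ is used). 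Without this comparison device you have no way to carry out the "gains versus losses" pairing under the actual longest-queue/least-loaded decisions at an arbitrary state, so the claimed drift inequality — and hence the threshold computation that reproduces $\theta_0$ — is not established.
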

\begin{pf}
Note that the statement is trivial for $\theta= 0$. Also, for $\theta
> 0$ each term $\exp(\theta\hat Z^r_j/\beta_j)$ is bounded so has
finite expectation, while for $\theta< 0$ each term $\exp(\theta\hat
Q^r_i)$ is bounded so has finite expectation.

Our method is related to that in~\cite{GamarnikStolyar}. (The
exposition below is self-contained.)

\textit{Step} 1: \textit{Preliminary bounds.} Consider the embedded
Markov chain
taken at the instants of (say, right after) the transitions.
We will use uniformization, that is, we keep the total rate of all
transitions from any state constant at\vadjust{\goodbreak}
$\alpha^r r = \sum_i \lambda^r_i + \sum_j r \beta_j \mu^*$,
where $\mu^* = \max\mu_j$;
note that, as $r\to\infty$, $\alpha^r \to
\alpha^*=\sum_i \lambda_i + \sum_j \beta_j \mu^*$.
The transitions are of three types: arrivals, departures and virtual
transitions, which do not change the state of the system.
The rate of a transition due to a type $i$ arrival
is $\lambda_i^r$; for the service completion at pool $j$ the rate
is $\mu_j (r\beta_j + Z^r_j)$ (recall\vspace*{1pt} $Z^r_j \leq0$); and a virtual
transition
occurs at the complementary rate
$\alpha^r r - \sum_i \lambda^r_i - \sum_j \mu_j (r\beta_j + Z^r_j)$.
(Obviously, the probability that a transition occurring
at a transition instant has a given type is the ratio of the
corresponding rate and
$\alpha^r r$.)
The stationary distribution of the embedded Markov chain is the same as
that of the original, continuous-time chain.

In the rest of the proof,
$\tau\in\{0,1,2,\ldots\}$ refers to the discrete time of
the embedded Markov chain.

We will work with the following Lyapunov function:
%
%
\begin{equation}
\label{eqnLyapunovfunction} \CL(\tau):= \sum_i
\exp\bigl(\theta\hat Q^r_i(\tau)\bigr) + \sum
_j \beta_j \exp\bigl(\theta\hat
Z^r_j(\tau)/\beta_j\bigr).
\end{equation}

Throughout, we use the bound
%
%
\begin{equation}
\label{eqnTaylorexponential} \exp(\theta y) \leq\exp(\theta x)
\bigl(1 +
\theta(y-x) + \tfrac12 \theta^2 (y-x)^2 \exp\bigl(\theta
\llvert y-x\rrvert\bigr) \bigr),
\end{equation}
which arises from the second-order Taylor expansion of $\exp(\theta y)$.

A priori we do not know that $\BE[\CL(\tau)]$ exists for $\theta>
0$. Indeed, while $\hat Z^r_j(t)$ is bounded for any $r$ (above by 0
and below by $-\beta_j \sqrt{r}$), the scaled queue size $\hat
Q^r_i(t)$ is unbounded. To deal with this,
we also consider the truncated Lyapunov function $\CL^K = \min\{\CL
,K\}$.

In the equation below, let $x$ denote the variable
of interest (either $\hat Q^r_i$ or $\hat Z^r_j/\beta_j$), and let
$S(\tau)$ denote the state of the embedded Markov chain at time~$\tau$.
 From (\ref{eqnTaylorexponential}) we obtain
\begin{eqnarray*}
\hspace*{-3pt}&&
\BE\bigl[\exp\bigl(\theta x(\tau+1)\bigr) - \exp\bigl(\theta
x(\tau
)\bigr) \vert
S(\tau)\bigr]
\\
\hspace*{-3pt}&&\qquad\leq\exp\bigl(\theta x(\tau)\bigr) \bigl(\theta\BE\bigl
[x(\tau
+1)-x(\tau)
\vert S(\tau)
\bigr]
\\
\hspace*{-3pt}&&\hspace*{52.5pt}\qquad\quad{}+\tfrac12 \theta^2\BE\bigl[\bigl(x(\tau+1)-x(\tau)
\bigr)^2 \exp\bigl(\theta\bigl\llvert x(\tau+1)-x(\tau)\bigr
\rrvert
\bigr) \vert S(\tau) \bigr] \bigr).
\end{eqnarray*}
Since for both $\hat Z^r_j$ and $\hat Q^r_i$ the change
in a single transition is bounded by $1/\sqrt{r}$, we conclude
%
%
\begin{eqnarray}
\label{eqnsecond-orderdrift}
&&
\BE\bigl[\exp\bigl(\theta\hat
Q_i^r(\tau+1)\bigr) - \exp\bigl(\theta\hat
Q_i^r(\tau)\bigr) \vert S(\tau)\bigr]
\nonumber\\
&&\qquad
\leq\exp\bigl(\theta\hat Q_i^r(\tau)\bigr) \biggl(\theta\BE
\bigl[\hat Q_i^r(\tau+1)-\hat Q_i^r(
\tau) \vert S(\tau)\bigr] \\
&&\qquad\quad\hspace*{94.1pt}{}+ \biggl(\frac12 \theta^2 \exp(\theta/
\sqrt{r}) \biggr) \frac1 r \biggr),
\nonumber
\\
\label{eqnsecond-orderdrift-z}
&&\BE\bigl[\beta_j \exp\bigl(\theta
\hat Z_j^r(\tau+1)/\beta_j\bigr) -
\beta_j \exp\bigl(\theta\hat Z_j^r(\tau)/
\beta_j\bigr) \vert S(\tau)\bigr]
\nonumber\\
&&\qquad\leq\exp\bigl(\theta\hat Z_j^r(\tau)/\beta_j
\bigr) \biggl(\theta\BE\bigl[\hat Z_j^r(\tau+1)-\hat
Z_j^r(\tau) \vert S(\tau)\bigr]\\
&&\qquad\quad\hspace*{93pt}{} + \biggl(
\frac
{1}{\beta_j}\frac12 \theta^2 \exp(\theta/ \sqrt{r}) \biggr)
\frac1 r \biggr).
\nonumber
\end{eqnarray}
Clearly, as long as values of $\theta$ are bounded,
for any fixed $C_2>1$ and all sufficiently (depending on $C_2$)
large $r$, the second summands in (\ref{eqnsecond-orderdrift}) and
(\ref{eqnsecond-orderdrift-z})
are upper bounded by $C_2 \frac12\theta^2 \frac1r$
and $\frac{1}{\beta_*} C_2 \frac12\theta^2 \frac1r$,
respectively, where $\beta_* = \min_j \beta_j$.
Note that the second bound is independent of $j$.

Next, we will obtain an upper bound on the drift
\[
\BE\bigl[\CL(\tau+1) - \CL(\tau) \vert S(\tau)\bigr].
\]
To do that, we introduce an artificial scheduling/routing rule, which
acts only within one time step, and is such that the increment
$\CL(\tau+1) - \CL(\tau)$ under this rule is ``almost'' a (pathwise,
w.p.1) upper bound on this increment under the actual---LQFS-LB---rule.
[It is important to keep in mind that the artificial rule is \textit{not}
a rule that is applied continuously. It is limited to one time step,
and its sole purpose is to derive a pathwise upper bound on the
increment $\CL(\tau+1) - \CL(\tau)$ within one time step.]

\textit{Step} 2: \textit{Artificial scheduling/routing rule.}
We will use the following notation:
$\CI_+=\CI_+(\tau):= \{i\dvtx\hat Q^r_i(\tau) > 0\}$,
$\CI_0=\CI_0(\tau):= \{i\dvtx\hat Q^r_i(\tau) = 0\}$,
$\CJ_-=\CJ_-(\tau):= \{j\dvtx\hat Z^r_j(\tau) < 0\}$,
$\CJ_0=\CJ_0(\tau):= \{j\dvtx\hat Z^r_j(\tau) = 0\}$.\vspace*{2pt}

\textit{Scheduling}: Departures from servers $j \in\CJ_-$
are processed normally, that is, reduce the corresponding $Z^r_j(\tau
)$ by 1.
Whenever there is a departure from a server pool $j \in\CJ_0$,
the server takes up a customer of type $i$ with probability $\lambda
^r_{ij} / \sum_i \lambda^r_{ij}$, keeping $Z^r_j(\tau+1) = 0$ and
reducing $Q^r_i(\tau+1) = Q^r_i(\tau) - 1$.
However, if it happens that the chosen $i$ is such that $Q^r_i(\tau)=0$,
that is, $i\in\CI_0$, then we keep $Q^r_i(\tau+1) = Q^r_i(\tau) =
0$ and instead allow $Z^r_j(\tau+1) = -1$.\vspace*{2pt}

\textit{Routing}: Arrivals to customer types $i \in\CI_+$
are processed normally, that is, the corresponding $Q^r_i(\tau)$ is
increased by 1.
Whenever there is an arrival to a customer type $i \in\CI_0$,
it is routed to server pool $j$ with probability
$\lambda^r_{ij} / \lambda^r_i$, keeping $Q^r_i(\tau+1) = Q^r_i(\tau
) = 0$ and
increasing $Z^r_j(\tau+1) = Z^r_j(\tau) + 1$.
However, if it happens that the chosen $j$ is such that $Z^r_j(\tau)=0$,
that is, $j\in\CJ_0$, then we keep $Z^r_j(\tau+1) = Z^r_j(\tau) =
0$ and instead allow $Q^r_i(\tau+1) = 1$.

\textit{Step} 3: \textit{One time-step drift under the artificial rule.}
For $i \in\CI_+$,
\[
\BE\bigl[\hat Q^r_i(\tau+1) - \hat Q^r_i(
\tau) \vert S(\tau)\bigr] = \frac{1}{\alpha^r r} \frac{1}{\sqrt{r}}
\biggl(
\lambda_i^r - \sum_j (
\mu_j r \beta_j) \frac{\lambda^r_{ij}}{\sum_k \lambda^r_{kj}}
\biggr)
\]
or, recalling that
%
%
\begin{equation}
\label{eq-nominal-point-diffusion} \sum_k
\lambda_{kj}^r = \mu_j \beta_j r
\rho^r =\mu_j \beta_j r (1-C/\sqrt{r}),
\end{equation}
we obtain
%
%
\begin{equation}
\label{eqndriftofQinI^*} \BE\bigl[\hat Q^r_i(
\tau+1) - \hat Q^r_i(\tau) \vert S(\tau)\bigr] = -
\frac{C \lambda_i}{\alpha^*} \frac{1+o(1)}{r},\qquad i \in\CI_+,
\end{equation}
where $o(1)$ is a fixed function, vanishing as $r\to\infty$.

If $\hat Q^r_i(\tau)=0$ (i.e., $i \in\CI_0$),
and a new type $i$ arrival is routed to pool $j$ with $\hat Z^r_j(\tau)<0$
(i.e., $j \in\CJ_-$), then of course $\hat Q^r_i$ stays at $0$
and $\hat Q^r_i(\tau+1) - \hat Q^r_i(\tau)=0$.
However,\vspace*{1pt} if a new type $i$ arrival has to be routed to $j\in\CJ_0$,
then (by the definition of artificial rule)
$\hat Q^r_i(\tau+1) - \hat Q^r_i(\tau)=\hat Q^r_i(\tau+1)=1/\sqrt{r}$.
Thus, we can write
%
%
\begin{equation}
\label{eqndriftofQinI0} \BE\bigl[\hat Q^r_i(
\tau+1) - \hat Q^r_i(\tau) \vert S(\tau)\bigr] = \sum
_{j\in\CJ_0} \frac{\lambda^r_{ij}}{\alpha^r r} \frac{1}{\sqrt
{r}},\qquad i \in
\CI_0.
\end{equation}
Note that the right-hand side of (\ref{eqndriftofQinI0}) is of order
$1/\sqrt{r}$, not $1/r$. However, we will see shortly
that order $1/\sqrt{r}$ terms in
$\BE[\CL(\tau+1) - \CL(\tau) \vert S(\tau)]$ cancel out,
and this expected drift is in fact of order $1/r$.

The treatment of the drift of $\hat Z^r_j$ is similar
[and again makes use of (\ref{eq-nominal-point-diffusion})].
We obtain
%
%
\begin{eqnarray}\qquad\quad
\label{eqndriftofZinJ*}
\BE\bigl[\hat Z^r_j(
\tau+1) - \hat Z^r_j(\tau) \vert S(\tau)\bigr] &=& -
\frac{1}{\alpha^r } \mu_j \bigl(\hat Z^r_j(
\tau)+\beta_j C\bigr) \frac{1}{r},\qquad j \in\CJ_-,
\\
\label{eqndriftofZinJ0new}
\BE\bigl[\hat
Z^r_j(\tau+1) - \hat Z^r_j(
\tau) \vert S(\tau)\bigr]
&=&-\frac{1}{\sqrt{r}} \sum_{i\in\CI_0} \frac{r\mu_j
\beta
_j}{\alpha^r r}
\frac{\lambda^r_{ij}}{\sum_k \lambda^r_{kj}} \nonumber\\[-8pt]\\[-8pt]
&=& - \frac
{1}{1-C/\sqrt{r}}
\sum_{i\in\CI_0}
\frac{\lambda^r_{ij}}{\alpha^r r} \frac{1}{\sqrt{r}},\qquad j \in\CJ_0.
\nonumber
\end{eqnarray}
We can rewrite (\ref{eqndriftofZinJ0new}) as
%
%
\begin{eqnarray}
\label{eqndriftofZinJ0}
&&\BE\bigl[\hat Z^r_j(
\tau+1) - \hat Z^r_j(\tau) \vert S(\tau)\bigr]\nonumber\\[-8pt]\\[-8pt]
&&\qquad = -\sum
_{i\in\CI_0} \frac{\lambda^r_{ij}}{\alpha^r r} \frac{1}{\sqrt
{r}} -
\frac{C \sum_{i\in\CI_0} \lambda_{ij}}{\alpha^*} \frac
{1+o(1)}{r},\qquad j \in
\CJ_0,\nonumber
\end{eqnarray}
where $o(1)$ is a fixed function, vanishing as $r\to\infty$.

Note that if $\CL(\tau) \geq K$, then $\CL^K(\tau+1) - \CL^K(\tau
) \le0$,
and if $\CL(\tau) < K$, then $\CL^K(\tau+1) - \CL^K(\tau) \le\CL
(\tau+1) - \CL(\tau)$.
Putting together this observation and
equations
(\ref{eqnsecond-orderdrift}), (\ref{eqnsecond-orderdrift-z}),
(\ref{eqndriftofQinI^*})--(\ref{eqndriftofZinJ0}),
we obtain
%
%
\begin{subequation}
\label{eq-key-drift-estimate}
%
%
\begin{eqnarray}\qquad\quad
&&
\BE\bigl[\CL^K(\tau+1) - \CL^K(\tau) \vert S(\tau)
\bigr]
\\
\label{eqnfirst-orderdriftQ}
&&\qquad\leq\one_{\{\CL(\tau) \leq K\}} \biggl( \sum_{i \in\CI_+}
\exp
\bigl(\theta\hat Q^r_i(\tau)\bigr)\theta\biggl[-
\frac
{C\lambda_i (1+o(1))}{\alpha^*}\biggr] \frac{1}{r}
\\
\label{eqnI0,J0}
&&\hspace*{46.5pt}\qquad\quad{}+ \sum_{i \in\CI_0, j \in\CJ_0} \theta\lambda^r_{ij}
\frac
{1}{\alpha^r r}\frac{1}{\sqrt{r}}
\\
\label{eqnfirst-orderdriftZ}
&&\hspace*{46.5pt}\qquad\quad{}+ \sum_{j \in\CJ_-} \exp\bigl(\theta\hat
Z^r_j(\tau)/\beta_j\bigr) \theta\biggl[-
\frac{\mu_j}{\alpha^r}\biggr] \bigl[\hat Z^r_j(\tau)+
\beta_j C\bigr] \frac
{1}{r}
\\
\label{eqnJ0,I0}
&&\hspace*{46.5pt}\qquad\quad{}+ \sum_{j \in\CJ_0, i \in\CI_0} \theta\biggl
[-\lambda^r_{ij}
\frac
{1}{\alpha^r r}\frac{1}{\sqrt{r}} -\frac{C\lambda_i
(1+o(1))}{\alpha
^*}\frac{1}{r}\biggr]
\\
\label{eqn2ndorderQerror}
&&\hspace*{46.5pt}\qquad\quad{}+ \sum_{i \in\CI} \exp\bigl(\theta\hat
Q^r_i(\tau)\bigr) \biggl(\frac
{C_2}{2}
\theta^2 \biggr) \frac1 r
\\
\label{eqn2ndorderZerror}
&&\hspace*{106.5pt}\qquad\quad{}+\sum_{j \in\CJ} \frac{1}{\beta_*}\exp\bigl
(\theta
\hat Z^r_j(\tau)/\beta_j\bigr) \biggl(
\frac{C_2}{2} \theta^2 \biggr) \frac1 r \biggr).
\end{eqnarray}
\end{subequation}
%
Note that the $O(1/\sqrt{r})$ terms in (\ref{eqnI0,J0}) and
(\ref{eqnJ0,I0}) cancel each other as promised, so there are no
$O(1/\sqrt{r})$ terms in the final bound.

\textit{Step} 4: \textit{One time-step drift under the LQFS-LB rule.}
We now explain in what sense the increment $\CL(\tau+1) - \CL(\tau
)$ under the
artificial rule
is ``almost'' an upper bound on this increment under LQFS-LB.
To illustrate the idea, suppose first that all $\beta_j$ are equal.
Then, it is easy to observe that
for any fixed $S(\tau)$, the increment $\CL(\tau+1) - \CL(\tau)$ under
the artificial rule is (with probability 1)
an upper bound of this increment under LQFS-LB.
Indeed, suppose first that a transition of the Markov chain is associated
with a service completion
in server pool $j$ with $\hat Z_j^r=0$. (If $\hat Z_j^r<0$, there is no
difference
in what the two rules do.)\vspace*{1pt} The only case of interest is when
the LQFS-LB ``takes'' a new customer for service from queue $i$ with
\mbox{$\hat Q_i^r>0$},
while the artificial rule tries to take a customer from a different
queue~$i'$.
Then $\hat Q_i^r \ge\hat Q_{i'}^r$ must hold, with $\hat Q_i^r > \hat Q_{i'}^r$
being the nontrivial case.
If $\hat Q_{i'}^r>0$, then the
LQFS-LB
will decrease the larger
queue, and so the increment $\CL(\tau+1) - \CL(\tau)$
under the LQFS-LB is smaller (which is true for both positive and
negative $\theta$).
If $\hat Q_{i'}^r=0$, then the
LQFS-LB
will still decrease queue $\hat Q_i^r$, while the artificial rule will instead
decrease $\hat Z_j^r$; using convexity of $e^{\theta x}$, we verify
that, again,
the increment $\CL(\tau+1) - \CL(\tau)$
under the LQFS-LB is smaller (for both positive and negative $\theta$).
If transition of the Markov chain is associated
with a new customer arrival, we use an analogous argument to show that,
again, the increment $\CL(\tau+1) - \CL(\tau)$
under the LQFS-LB cannot be greater than that under the artificial rule.
We conclude that when all $\beta_j$ are equal,
the key estimate (\ref{eq-key-drift-estimate}) of the
espected drift holds, in exactly same form, for LQFS-LB rule as well.

Now consider the case of general $\beta_j$. In the event of a service
completion
(and then possibly taking a customer for service from one of the
nonzero queues),
the increment $\CL(\tau+1) - \CL(\tau)$ under LQFS-LB is still no greater
than under the artificial rule. (Verified similarly to the case of all
$\beta_j$ being equal.)
The only situation when LQFS-LB can possibly cause
a greater increment than the artificial rule is as follows. There is an
arrival of
a type $i$ customer, which the artificial rule routes to pool $j$ with
$\hat Z_j^r <0$,
but the LQFS-LB will instead route it to pool $k$ such that $\hat
Z_j^r/\beta_j \ge
\hat Z_k^r/\beta_k$. Given convexity of function $e^{\theta x}$, the
``worst case,'' that is,
the largest increment of $\CL(\tau+1) - \CL(\tau)$, occurs when
$\hat Z_k^r$ is such that
the equality
holds, $\hat Z_j^r/\beta_j = \hat Z_k^r/\beta_k$.
(If $\theta>0$ the positive increment
gets larger, if we were to increase $\hat Z_k^r$; if $\theta<0$ the negative
increment gets smaller in absolute value, if we were to increase $\hat Z_k^r$.
Note also that here we allow $\hat Z_k^r$, determined by the equality,
to be such that $Z_k^r= \hat Z_k^r \sqrt{r}$ is possibly noninteger,
because we only use this value
of $\hat Z_k^r$ to
estimate the increment of a function.)
Thus, as we replace the artificial rule by LQFS-LB,
in the ``worst case,'' the increment
\[
\beta_j \exp\bigl(\theta\bigl[\hat Z^r_j(
\tau)+r^{-1/2}\bigr]/\beta_j\bigr) - \beta_j
\exp\bigl(\theta\hat Z^r_j(\tau)/\beta_j
\bigr)
\]
may need to be replaced by
\[
\beta_k \exp\bigl(\theta\bigl[\hat Z^r_k(
\tau)+r^{-1/2}\bigr]/\beta_k\bigr) - \beta_k
\exp\bigl(\theta\hat Z^r_k(\tau)/\beta_k
\bigr)
\]
with $\hat Z_k^r(\tau)$ satisfying $\hat Z_j^r(\tau)/\beta_j = \hat
Z_k^r(\tau)/\beta_k$.
In this case
we obtain
%
%
\begin{eqnarray}
\label{eqnsecond-orderdrift-z-special}
&&
\beta_k \exp\bigl(\theta
\hat Z_k^r(\tau+1)/\beta_k\bigr) -
\beta_k \exp\bigl(\theta\hat Z_k^r(\tau)/
\beta_k\bigr)
\nonumber
\\
&&\qquad\leq\exp\bigl(\theta\hat Z_k^r(\tau)/\beta_k
\bigr) \biggl(\theta r^{-1/2} + \biggl(\frac{1}{\beta_k}\frac12
\theta^2 \exp(\theta/ \sqrt{r}) \biggr) \frac1 r \biggr)
\\
&&\qquad\leq\exp\bigl(\theta\hat Z_j^r(\tau)/\beta_j
\bigr) \biggl(\theta r^{-1/2} + \biggl(\frac{1}{\beta_*}\frac12
\theta^2 \exp(\theta/ \sqrt{r}) \biggr) \frac1 r \biggr),
\nonumber
\end{eqnarray}
%
This means that, \textit{under LQFS-LB rule},
\textit{the estimate} (\ref{eq-key-drift-estimate}) \textit{still holds.}

\textit{Step} 5: \textit{Exponential moments estimates.}
Next, note that for each fixed $K>0$ and each fixed parameter $r$,
the values of $\exp(\theta\hat Q^r_i(\tau))$ are uniformly bounded
over all
states $S(\tau)$ satisfying condition $\CL(\tau) \leq K$; the values of
$\exp(\theta\hat Z^r_j(\tau)/\beta_j)$ are ``automatically''
uniformly bounded (for a fixed $r$).
We take the expected values of both parts of (\ref{eq-key-drift-estimate})
with respect to the invariant distribution. The expectation of the
left-hand side is of course $0$,
and so we get rid of the factor $1/r$ from the right-hand side
expectation. The resulting
estimates we will write separately for the cases $\theta>0$ and
$\theta<0$
(with the case $\theta=0$ being trivial).

\textit{Case $\theta> 0$.}
For a fixed $\theta>0$, the expected value of the sum of all terms not
containing
$\exp(\theta\hat Q^r_i(\tau))$ is bounded (uniformly in $r$).
Indeed, this follows from the
facts that $\hat Z^r_j(\tau)\le0$ and
$0\le-\theta\hat Z^r_j(\tau)\exp(\theta\hat Z^r_j(\tau)/\beta_j)
\le\beta_j/e$
(because $0 \geq x e^x \geq-\frac1e$ for $x \le0$). Then, we obtain
%
%
\begin{equation}
\label{eq-q-bound1} \BE\biggl[\one_{\{\CL(\tau) \leq K\}}\sum
_{i \in\CI_+}\exp\bigl(\theta\hat Q^r_i(
\tau)\bigr) \biggl(\frac{C\lambda_i (1+o(1))}{\alpha^*}\theta-
\biggl
(\frac
{C_2}{2}
\theta^2 \biggr) \biggr) \biggr] \leq C_1\hspace*{-35pt}
\end{equation}
for some constant $C_1 = C_1(\theta) > 0$, uniformly on all
sufficiently large $r$.
Now let us fix a sufficiently small positive $\theta$, so that
all coefficients of $\exp(\theta\hat Q^r_i(\tau))$ are at least some
$\varepsilon>0$
(for all large $r$).
Recalling that $C_2>1$ can be arbitrarily close to~$1$, it suffices that
$\theta< \theta_0 = 2 (\min_i \lambda_i) /\alpha^*$.
Then,
\[
\BE\biggl[\one_{\{\CL(\tau) \leq K\}}\sum_{i \in\CI_+}\exp
\bigl(\theta\hat Q^r_i(\tau)\bigr) \biggr] \leq
C_1/\varepsilon,
\]
from where, letting $K\to\infty$, by monotone convergence, we obtain
%
%
\begin{equation}
\label{eqnexpectationofQ} \BE\biggl[\sum_{i \in\CI_+}
\exp\bigl(\theta\hat Q^r_i(\tau)\bigr) \biggr] \leq
C_1/\varepsilon< \infty,
\end{equation}
uniformly on all large $r$.

\textit{Case $\theta< 0$.} Fix arbitrary $\theta<0$. In this case,
the expected value of the sum of all terms not containing
$\exp(\theta\hat Z^r_j(\tau))$, is bounded (uniformly on $r$). We
can write
%
%
\begin{eqnarray}
\label{eq-z-bound1}
&&
\BE\biggl[\one_{\{\CL(\tau) \leq K\}}\sum
_{j \in\CJ_-}\exp\bigl(\theta\hat Z^r_j(
\tau)/\beta_j\bigr)
\nonumber\\[-8pt]\\[-8pt]
&&\hspace*{77pt}{}\times\biggl(\theta\biggl[\frac{\mu_j}{\alpha
^r} \biggr
] \bigl[\hat Z^r_j(
\tau)+\beta_j C \bigr] - \biggl(\frac{1}{\beta_*} \frac{C_2}{2}
\theta^2 \biggr) \biggr)\biggr] \leq C'_1
\nonumber
\end{eqnarray}
for some constant $C'_1 = C'_1(\theta) > 0$, uniformly on all
sufficiently large $r$.
Let us choose sufficiently large $K_1>0$, such that the condition
$\hat Z^r_j(\tau)\le- K_1$ implies that
\[
\biggl(\theta\biggl[\frac{\mu_j}{\alpha^r}\biggr] \bigl[\hat Z^r_j(
\tau)+\beta_j C\bigr] - \biggl(\frac{1}{\beta_*} \frac{C_2}{2}
\theta^2 \biggr) \biggr) \ge\varepsilon
\]
for some $\varepsilon>0$ (and all large $r$). Then, from (\ref{eq-z-bound1}),
\[
\BE\biggl[\one_{\{\CL(\tau) \leq K\}}\sum_{j \in\CJ_-}
\one_{\{
\hat Z^r_j(\tau)\leq-K_1\}} \exp\bigl(\theta\hat Z^r_j(\tau)/
\beta_j\bigr) \biggr] \leq C'_1/\varepsilon
\]
from where, letting $K\to\infty$, by monotone convergence, we obtain
\[
\BE\biggl[\sum_{j \in\CJ_-}
\one_{\{ \hat Z^r_j(\tau)\leq-K_1\}} \exp\bigl(\theta\hat
Z^r_j(\tau)/
\beta_j\bigr) \biggr] \leq C'_1/\varepsilon<
\infty,
\]
uniformly on all large $r$, which implies the required result.
\end{pf}
%
%
\begin{corollary}
The sequence of stationary distributions of the processes $ ((\hat
Q^r_i(\cdot)),(\hat Z^r_j(\cdot)) )$ has a weak limit, which is
the unique stationary distribution of the
limiting process $ ((\hat Q_i(\cdot)),(\hat Z_j(\cdot)) )$,
described as follows:
\[
\hat Q_i(t) = \max\bigl\{\hat Y(t)/I,0\bigr\}\qquad\forall i,\qquad
\hat
Z_j(t) = \min\biggl\{\frac{\beta_j}{\sum_k\beta_k} \hat
Y(t),0\biggr\}
\qquad\forall
j,
\]
where $\hat Y(\cdot)$ is a one-dimensional diffusion process
with constant variance parameter $2\sum_i \lambda_i$ and piece-wise
linear drift,
equal at point $x$ to
\[
-\biggl[\sum_j \mu_j\biggr]
\bigl[C+\min\{x,0\}\bigr].
\]
The invariant distribution density is then a continuous function,
which is a ``concatenation'' at point $0$ of
exponential (for $x\ge0$)
and Gaussian (for $x\le0$) distribution densities.
\end{corollary}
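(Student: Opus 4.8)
The plan is to combine the transient diffusion approximation of Theorem~\ref{thm:diffusion SDE-hw}, specialized to $\mu_{ij}=\mu_j$, with the uniform exponential-moment bound of Theorem~\ref{th-tightness-spec-case}, through a standard tightness-plus-attractor (``limit interchange'') argument. First I would reduce the limit process to one dimension. Record the pre-limit identity $\sum_i \hat X^r_i(t)=\sum_j \hat Z^r_j(t)+\sum_i \hat Q^r_i(t)$, obtained from $\hat X^r_i=\hat\Psi^r_i+\hat Q^r_i$ and $\sum_i\hat\Psi^r_i=\sum_j\hat\Psi^r_j=\sum_j\hat Z^r_j$. In the limit, Theorem~\ref{thm:diffusion SDE-hw} gives $\hat\Psi_\CE=MF[\hat X_\CI]$ after the initial instant, hence $\hat\Psi_i=F[\hat X_\CI]_i$ and $\hat Q_i=\hat X_i-F[\hat X_\CI]_i$; since $\pi y=y$ on $\{\sum_i y_i=0\}$ the map $F$ is continuous, and an elementary computation gives $\hat Q_i=\max\{\hat Y/I,0\}$ with $\hat Y:=\sum_i\hat X_i$, while on $\{\hat Y\le 0\}$ the load-balancing relation $\sum_i z_{ij}=\eta\beta_j$ of \eqref{M-def} gives $\hat Z_j=\min\{\beta_j\hat Y/\sum_k\beta_k,0\}$.

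\textbf{The scalar diffusion and its ergodicity.} Summing the coordinates of the defining SDE \eqref{eq-psi-limit-hw}, keeping track of the $O(\sqrt r)$ correction $\sqrt r\,l_i$ to the arrival rates (which supplies the constant term involving $C$) and using $\mu_{ij}=\mu_j$, one obtains that $\hat Y$ solves a scalar SDE with constant diffusion coefficient $2\sum_i\lambda_i$ and the piecewise-linear, globally Lipschitz drift $b$ stated in the corollary. This drift is negative and bounded away from $0$ for $\hat Y\ge 0$ and mean-reverting for $\hat Y\le 0$, so the diffusion is positive recurrent with a unique invariant law, whose density is proportional to $\exp\{(1/\sum_i\lambda_i)\int_0^x b(u)\,du\}$, i.e. a continuous concatenation at $0$ of an exponential tail on $x\ge 0$ with a Gaussian tail on $x\le 0$. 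Since $((\hat Q_i),(\hat Z_j))$ is a continuous bijection of $\hat Y$, the limiting process is Markov and inherits this unique invariant distribution; and $\hat X_\CI$ itself has a unique stationary law, because its $\pi$-component is governed by the stable matrix $A_c$ (Theorem~\ref{th-loc-stab-spec222}) and its $(1,\dots,1)$-component is the above scalar diffusion.

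\textbf{Tightness and interchange.} Let $\nu^r$ be the stationary law of the diffusion-scaled Markov process of the $r$-th system (and write $\nu^r$ also for its images under the continuous read-off maps). Theorem~\ref{th-tightness-spec-case} gives, for a small $\theta>0$ and a negative $\theta$, a uniform bound on $\BE_{\nu^r}[\sum_i e^{\theta\hat Q^r_i}+\sum_j\beta_j e^{\theta\hat Z^r_j/\beta_j}]$, so the laws of $(\hat Q^r_\CI,\hat Z^r_\CJ)$, and hence (by the identity above, using $\hat Z^r_j\le 0\le \hat Q^r_i$) of $\hat Y^r$, are tight with no mass escaping to infinity. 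Combining this with a steady-state state-space-collapse estimate (that the occupancies stay within $o(\sqrt r)$ of the load-balanced manifold $\CM$, uniformly over the stationary distribution, obtained by running the transient collapse over $[0,\eta]$) yields tightness of the $\hat X_\CI$-marginals of $\nu^r$. Fix a subsequence along which these marginals converge weakly, $\nu^{r_k}\weakto\nu$ with $\hat X_\CI$-marginal $\nu_\CI$; starting the $r_k$-th system in stationarity and using the version of Theorem~\ref{thm:diffusion SDE-hw} with weakly convergent random initial data (via conditioning or a Skorohod representation) gives $\hat X^{r_k}_\CI(\cdot)\implies \hat X_\CI(\cdot)$ in $D^{I}[0,\infty)$ with $\hat X_\CI(0)\sim\nu_\CI$, and $\hat\Psi^{r_k}_\CE(\cdot)\implies MF[\hat X_\CI(\cdot)]$ in $D^{I+J-1}[\eta,\infty)$ for every $\eta>0$. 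Stationarity of the pre-limits forces the law of $((\hat Q^{r_k}_i(t)),(\hat Z^{r_k}_j(t)))$ to be $t$-independent; passing to the limit through the continuous read-off maps shows that $\hat Y(t)$, and hence $\hat X_\CI(t)$, has a $t$-independent law for $t>0$, so $\nu_\CI$ is stationary for the $\hat X_\CI$-diffusion. By the uniqueness above this pins $\nu_\CI$ (and, via \eqref{eq-psi-ij-conv-hw}, the limit of $\nu^r$) down to the explicit distribution of the corollary; since every subsequential limit coincides, $\nu^r$ converges along the whole sequence.

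\textbf{Main obstacle.} The hard part is the last step: converting the finite-horizon, deterministic-initial-condition statement of Theorem~\ref{thm:diffusion SDE-hw} into a statement in stationarity. This needs (i) upgrading the exponential moments to tightness of the full $\hat X_\CI$-marginals -- a quantity the Lyapunov function of Theorem~\ref{th-tightness-spec-case} does not control directly, which forces the steady-state state-space-collapse estimate noted above -- and (ii) accommodating the instantaneous jump onto $\CM$ at time $0$ in Theorem~\ref{thm:diffusion SDE-hw}, which is why the occupancy convergence holds only on $[\eta,\infty)$ and one must let $\eta\downarrow 0$ at the end. The scalar computation and the ergodicity of the one-dimensional diffusion are routine by comparison.
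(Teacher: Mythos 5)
Your overall architecture (uniform exponential moments $\Rightarrow$ tightness of stationary laws; transient diffusion limit started from a stationary subsequential limit $\Rightarrow$ the limit is invariant for the limiting diffusion; uniqueness of the one-dimensional diffusion's invariant law pins it down) is the same scheme as the paper, which simply quotes Theorem 8.5.1 of Liptser--Shiryaev together with Theorem 4.4 of Gurvich--Whitt for the interchange step. The genuine gap is exactly at the point you flag as the ``main obstacle'': your route passes through the full state $\hat X^r_\CI$, and hence needs tightness, under the stationary distributions, of $\hat X^r_\CI$ --- equivalently of the occupancy deviations $\hat\Psi^r_\CE$, since the coordinates $\hat\Psi^r_{ij}$ are needed as initial data in Theorem~\ref{thm:diffusion SDE-hw}. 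Theorem~\ref{th-tightness-spec-case} controls only $\hat Q^r_\CI$ and $\hat Z^r_\CJ$; knowing $\sum_i\hat\Psi^r_{ij}=\hat Z^r_j$ is tight says nothing about how the busy servers are split among customer types, so no steady-state diffusion-scale bound on $\hat\Psi^r_\CE$ is available. Your proposed fix --- ``running the transient collapse over $[0,\eta]$'' from stationarity --- is circular: the transient state-space-collapse results require an a priori $O(\sqrt r)$ (or at least suitably controlled) bound on the initial occupancy deviations, which is precisely the quantity you are trying to bound in steady state. Establishing steady-state SSC at diffusion scale is a substantial separate task (of the Gamarnik--Stolyar type), not a remark.

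The paper avoids this entirely, and you could too: in the special case $\mu_{ij}=\mu_j$ the pair $\left((Q^r_i),(Z^r_j)\right)$ is itself a Markov process (service rates in pool $j$ are $\mu_j(r\beta_j+Z^r_j)$, and both routing and scheduling decisions are functions of $(Q^r,Z^r)$ alone), and the corollary's statement concerns only these coordinates. So the interchange argument should be run directly on $(\hat Q^r_\CI,\hat Z^r_\CJ)$, whose stationary tightness is exactly what Theorem~\ref{th-tightness-spec-case} gives, with the limiting dynamics being the read-off of the autonomous scalar diffusion $\hat Y$; no control of $\hat\Psi^r_\CE$ is needed. A secondary inaccuracy: your uniqueness argument for the stationary law of $\hat X_\CI$ claims the $\pi$-component is ``governed by $A_c$'', but $\pi A_u F[y]=A_c\pi y$ fails on $\{\sum_i y_i\le 0\}$ unless $(1,\dots,1)^{\dagger}$ is a right eigenvector of $A_u$ (as in Lemma~\ref{lem-777}); this does not hurt the corollary, because the object whose invariant law you need is a function of the autonomous component $\hat Y$ only, but as written that step is not correct.
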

\begin{pf}
Theorem~\ref{th-tightness-spec-case} of course implies
tightness of stationary distributions of $ ((\hat Q^r_i(\cdot
)),(\hat Z^r_j(\cdot)) )$.
Then it follows from~\cite{LiptserShiryaev}, Theorem 8.5.1 (whose
conditions are easily verified
in our case), that as $r\to\infty$,
any weak limit of the sequence of stationary distributions of
the processes $ ((\hat Q^r_i(\cdot)),(\hat Z^r_j(\cdot)) )$
is a stationary distribution of the limit process,
described in~\cite{GurvichWhitt}, Theorem 4.4,
and therefore is the one-dimensional diffusion specified in the
statement of the corollary.
\end{pf}

Finally, we remark that a tightness result analogous to
Theorem~\ref{th-tightness-spec-case} holds for the underloaded system,
$\rho<1$,
and can be proved essentially the same way.

The asymptotic regime in this case is such that $\lambda^r_i = r
\lambda_i$
[there is no point in considering $O(\sqrt{r})$ terms in $\lambda^r_i$
when $\rho<1$]. We denote $Z^r_j(t) = \Psi^r_j(t) - r \beta_j \rho$
(which is consistent with the definition given earlier in this section
for $\rho=1$), and keep notation $Q^r_i(t)$ for the queue length.
We work with the following Lyapunov function:
\[
\CL:= \sum_i \bigl[\exp
\bigl(\theta(1-\rho) \sqrt{r} +\theta\hat Q^r_i\bigr) -
\exp\bigl(\theta(1-\rho) \sqrt{r}\bigr) \bigr] + \sum
_j \beta_j \exp\bigl(\theta\hat
Z^r_j/\beta_j\bigr). 
\]
The same approach
as in the proof of Theorem~\ref{th-tightness-spec-case}
leads to the following result: for any real $\theta$,
\[
\lim\sup_r \BE\biggl[\sum_j
\exp\bigl(\theta\hat Z^r_j\bigr)\biggr] < \infty.
\]
The limiting\vspace*{1pt} process for $(\hat Z^r_j(\cdot))$ is
$(\hat Z_j(\cdot)) = (\frac{\beta_j}{\sum_k\beta_k} \hat Y(\cdot))$,
with $\hat Y(\cdot)$ being a one-dimensional
Ornstein--Uhlenbeck process, with Gaussian stationary distribution.
The limit of stationary distributions of $(\hat Z^r_j(\cdot))$
is the stationary distribution of $(\hat Z_j(\cdot))$.

\section*{Acknowledgments}

The authors would like to thank the referees for useful comments that
helped to improve the exposition of the material.


%

\printaddresses

\end{document}